\documentclass[amsfonts]{amsart}
\setlength\parindent{0pt}
\setlength{\parskip}{5pt}

\usepackage{amssymb, amsfonts , tikz-cd, bm}
\usepackage[all,arc]{xy}
\usepackage{enumitem}
\usepackage{mathrsfs}
\usepackage{microtype}
\usepackage{amsaddr}
\usepackage{amsthm,etoolbox}
\usepackage{eucal, mathbbol}



\usepackage{hyperref}
\hypersetup{hidelinks, colorlinks}
\hypersetup{
    citecolor = {teal},
    linkcolor = {black},
}
\usepackage[capitalise]{cleveref}

\theoremstyle{plain}
\newtheorem{thm}{Theorem}[section]
\newtheorem{cor}[thm]{Corollary}
\newtheorem{prop}[thm]{Proposition}
\newtheorem{lem}[thm]{Lemma}
\newtheorem{conj}[thm]{Conjecture}

\theoremstyle{definition}
\newtheorem{defn}[thm]{Definition}

\newtheorem{assm}[thm]{Assumption}

\theoremstyle{remark}
\newtheorem{rem}[thm]{Remark}
\AtBeginEnvironment{rem}{%
    \small
  \pushQED{\qed}%
}
\AtEndEnvironment{rem}{\popQED}


\makeatletter
\makeatother
\numberwithin{equation}{section}

\setlength{\textwidth}{\paperwidth}
\addtolength{\textwidth}{-2.4in}
\calclayout

\AtBeginDocument{%
   \def\MR#1{}
}


\newcommand{\sg}{{\mathbb{Z}/p}}

\newcommand{\zpeq}{{[\![t, \theta]\!]}}

\newcommand{\brK}{(\!(z)\!)}
\newcommand{\brO}{[\![z]\!]}
\newcommand{\gitq}{/\!\!/}

\title[mod $p$ Quantum Hikita]{3D Mirror Symmetry in Positive Characteristic}

\author{Shaoyun Bai and  Jae Hee Lee}
\email{shaoyunb@mit.edu}
\email{jaeheelee@stanford.edu}

\thanks{The first author is supported by NSF DMS-2404843.}

\begin{document}

\begin{abstract}
    Via the formulation of (quantum) Hikita conjecture with coefficients in a characteristic $p$ field, we explain an arithmetic aspect of the theory of 3D mirror symmetry. Namely, we propose that the action of Steenrod-type operations and Frobenius-constant quantizations intertwine under the (quantum) Hikita isomorphism for 3D mirror pairs, and verify this for the Springer resolutions and hypertoric varieties.
\end{abstract}

\maketitle
\section{Introduction}\label{sec:intro}

\subsection{Context and overview}\label{ssec:intro-context}
$3D$ mirror symmetry \cite{3d-mirror}, or symplectic duality \cite{BPW1, BLPW2}, dictates that symplectic resolutions should come in pairs and various layers of information should be exchanged for a dual pair of symplectic resolutions. Such duality phenomena can be approached through different lenses, and there is a large body of literature covering different aspects, including quantization, enumerative geometry, and categorification: see \cite{kamnitzer-survey} for an introduction.

The goal of this paper is to initiate the study of $3D$ mirror symmetry in characteristic $p$ where $p$ is a prime number. Here, mod $p$ coefficients appear in an asymmetric fashion under symplectic duality. Namely, on one side, we study enumerative geometry of \emph{complex} symplectic resolutions with mod $p$ coefficients, in the sense that the counts in the enumeration problems are reduced mod $p$; on the other side, we study symplectic resolutions \emph{defined over a field of characteristic $p$}.

Our formulation of such an ``arithmetic'' $3D$ mirror symmetry conjecture is based on the \emph{quantum Hikita conjecture} \cite{KMP21}, which posits that two $D$-modules associated to conical symplectic resolutions are isomorphic for dual pairs. On one hand, we have the \emph{quantum $D$-module} coming from enumerative geometry of genus $0$ curves, and on the other hand, we have the \emph{$D$-module of graded/twisted traces} arising from quantizations of symplectic singularities.

Our conjecture (\cref{conj:intro})  based on the quantum Hikita conjecture proceeds by introducing distinguished central endomorphisms of the $D$-modules that only arise in characteristic $p$, and identifying these under the isomorphism of $D$-modules. On the quantum $D$-module side, these endomorphisms come from a certain deformation of the classical Steenrod operations in $\mathbb{F}_p$-coefficient singular cohomology; on the $D$-module of traces side, they arise from a ``large center'' for quantizations in characteristic $p$ given by a certain deformation of the Frobenius map. We verify our conjecture (\cref{thm:example}) for the classical examples of Springer resolutions and hypertoric varieties.

\subsection{Motivations and main conjecture}
Our formulation of the conjecture is based on the notable trilogy of the \emph{Hikita conjecture} \cite{hikita}, the \emph{Hikita--Nakajima conjecture} \cite{KTWWY}, and the \emph{quantum Hikita conjecture} \cite{KMP21}. In characteristic zero, these conjectures predict that given a dual pair of conical symplectic resolutions $(X, X^!)$ over $\mathbb{C}$ with a Hamiltonian torus $T$ and $T^!$ action respectively, the following should be true.
\begin{enumerate}
    \item Let $Y$ be the affinization of $X$, then there exists an isomorphism of graded algebras
    \begin{equation*}
        \mathcal{O}(Y^T) \cong H^*(X^!; \mathbb{C}),
    \end{equation*}
    where $\mathcal{O}(Y^T)$ is the ring of regular functions of the $T$-fixed point scheme with the grading induced from the conical $\mathbb{G}_m$-action.
    \item Let $\mathcal{A}$ be the universal quantized coordinate ring of $X$. Then there exists an isomorphism of graded algebras
    \begin{equation*}
        B(\mathcal{A}) \cong H^*_{T^! \times \mathbb{G}_m} (X^!; \mathbb{C}),
    \end{equation*}
    where $B(\mathcal{A})$ is the $B$-algebra (\cref{defn:B-algebra}) of $\mathcal{A}$ with respect to a cocharacter in $X_{\bullet}(T)$, and the two factors of $T^! \times \mathbb{G}_m$ act on $X^{!}$ by Hamiltonian automorphisms of  $X^!$ and conical actions respectively.
    \item Finally, the $D$-module of graded/twisted traces of $\mathcal{A}$ (\cref{defn:Dmod-eq}) should be identified with the (specialized) quantum $D$-module of $X^{!}$ (\cref{defn:Dmod-quantum}):
    \begin{equation*}
        \mathscr{B} (\mathcal{A}) \cong QH^*_{T^! \times \mathbb{G}_m}(X^!;\mathbb{C}).
    \end{equation*}
\end{enumerate}
We refer the reader to the exposition in Section \ref{sec:quantum-hikita} for a precise discussion of the relevant concepts. Beginning from the original work of \cite{hikita} verifying the original Hikita conjecture for Hilbert scheme of points in a plane, parabolic type $A$ Slodowy slices, and hypertoric varieties, extensive efforts (though likely to be an incomplete list) \cite{nakajima-takayama, hatano2021cohomologyframedmodulispaces, shlykov2022hikitaconjectureminimalnilpotent, leung2022elliptichypertoricvarieties, BFN-Springer, krylov2023hikitanakajimaconjecturegiesekervariety, chen2024quantizationminimalnilpotentorbits, hoang2024hikitaconjectureclassicallie, setiabrata2024hikitasurjectivitymathcaln, hoangkrylovmatvieievskyi} have been dedicated to prove these conjectural isomorphisms for concrete examples. We refer to \cite[Section 1.3]{hoangkrylovmatvieievskyi} for an excellent survey of the current status of the Hikita-type conjectures for various examples.

Our motivation for considering the mod $p$ quantum Hikita conjecture comes from the manifestation of the ``large center" phenomena in $D$-modules with mod $p$ coefficients in both quantization theory and enumerative geometry. Namely, the respective $D$-modules in the statement of quantum Hikita conjecture carry extra central endomorphisms in mod $p$ coefficients with different origins, from deformation quantization in characteristic $p$ and mod $p$ curve counts. Our main proposal is to identify these central endomorphisms of $D$-modules. 

For the statement, let $k$ be a field of characteristic $p$. Let $X, X^!$ be a pair of symplectically dual conical symplectic resolutions equipped with Hamiltonian torus actions of $T, T^!$ resp., but now assume that $X$ and the torus $T$ are defined over the ground field $k$ (admits a ``split form''), while $X^!$ and $T^!$ are still both defined over $\mathbb{C}$. For a more precise formulation of what we mean by symplectically dual pairs in this setting, see the discussion in Section \ref{ssec:hikita-quantum-p}.

For quantizations of algebraic symplectic varieties over a field of positive characteristic, Bezrukavnikov--Kaledin \cite{bezrukavnikov-kaledin-quantp} introduced \emph{Frobenius-constant quantizations}, which, roughly speaking, are quantizations of the coordinate rings of symplectic varieties carrying a central lift of the Frobenius endomorphism. Such quantizations exist for a large class of examples, including Springer resolutions \cite{bezrukavnikov-mirkovic-rumynin} and Coulomb branches \cite{BFN2} for $3d$ $\mathcal{N}=4$ gauge theories \cite{lonergan}. A simple calculation shows that the central elements coming from such a lift of the Frobenius induce central endomorphisms of $D$-modules of graded/twisted traces:

\begin{prop}[\cref{prop:frob-acts-on-Dmod}]
    Let $\mathcal{X}$ be the universal deformation (\cref{ssec:dymp-res}) and $\mathcal{A}$ be the global sections of its quantization. Assume that there is a \emph{Frobenius-constant quantization} (\cref{defn:frob-const-quant}) $\Lambda: \mathcal{O}(\mathcal{X})^{(1)} \to Z(\mathcal{A})$. Then $\Lambda(a)$ for $a \in \mathcal{O}(\mathcal{X})_0$, an element of weight zero under $T$ (see \cref{ssec:dymp-res}), acts on the $D$-module of twisted traces as a central endomorphism.
\end{prop}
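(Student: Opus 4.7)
The proof plan is to reduce the statement to two verifications: (i) left multiplication by $\Lambda(a)$ on $\mathcal{A}$ descends to a well-defined endomorphism of $\mathscr{B}(\mathcal{A})$, and (ii) this endomorphism commutes with the connection that defines the $D$-module structure. Given these, the centrality in $\mathrm{End}(\mathscr{B}(\mathcal{A}))$ of the resulting endomorphism follows directly from the hypothesis $\Lambda(a)\in Z(\mathcal{A})$.

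For (i), I would unwind \cref{defn:Dmod-eq}: $\mathscr{B}(\mathcal{A})$ is obtained from (a suitable localization of) $\mathcal{A}$ by quotienting by a subspace spanned by $\cochar$-twisted commutators with respect to the Hamiltonian $T$-action. Because $\Lambda(a)\in Z(\mathcal{A})$, left multiplication by it is tautologically compatible with ordinary commutators; compatibility with the $\cochar$-twist is where the weight-zero hypothesis enters. Since $a\in\mathcal{O}(\mathcal{X})_0$ is of $T$-weight zero and the Frobenius twist preserves the zero weight, $\Lambda(a)$ itself has $T$-weight zero, so the twisting factor acts trivially on it and left multiplication descends cleanly to $\mathscr{B}(\mathcal{A})$.

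For (ii), I would write the connection as a family of derivations $\nabla_\tau$ in the base parameters $\tau$ (the equivariant parameters of $T$ together with the loop parameter), induced from a connection already defined on the ambient $\mathcal{A}$. The Leibniz rule then reduces the check $[\nabla_\tau, \Lambda(a)\cdot\,] = 0$ to the identity $\nabla_\tau \Lambda(a) = 0$. This is precisely where the Frobenius-constancy hypothesis is decisive: by \cref{defn:frob-const-quant}, $\Lambda$ factors through the Frobenius twist $\mathcal{O}(\mathcal{X})^{(1)}$, so the base parameters occur in the image of $\Lambda$ only through their $p$-th powers, and these are annihilated by $\nabla_\tau$ in characteristic $p$.

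\textbf{Main obstacle.} The substantive step is installing the dictionary between the base parameters of $\mathscr{B}(\mathcal{A})$ and the Frobenius twist in the source of $\Lambda$: one must track how the equivariant parameters of $T$ enter the construction of the universal deformation $\mathcal{X}$, how the connection acts on $\mathcal{A}$ through those parameters, and finally observe that the Frobenius twist forces each such parameter to appear in $\Lambda(a)$ only as its $p$-th power. Once this identification is in place, the commutation $\nabla_\tau \Lambda(a) = 0$ is a short formal consequence of the Leibniz rule together with $\partial_\tau(\tau^p) = 0$ in characteristic $p$.
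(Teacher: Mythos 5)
Your step (i) — well-definedness modulo the relations $1\otimes ab - z^\lambda\otimes ba$ via centrality plus the observation that $\Lambda(a)$ has $T$-weight zero — is exactly the paper's argument and is fine. The problem is step (ii). You reduce covariant constancy to ``$\nabla_\tau\Lambda(a)=0$'' and then justify this by saying that the base parameters enter $\Lambda(a)$ only through $p$-th powers, which are killed by differentiation in characteristic $p$. That is not the mechanism here, and the reduction as you set it up would not go through. Unwind \cref{defn:Dmod-eq}: the operator attached to $y\in\mathcal{A}_0^2$ sends $z^\mu\otimes b$ to $z^\mu\otimes\bigl(yb+\hbar\langle\mu,\bar{y}\rangle b\bigr)$, so the only variables it ``differentiates'' are the $z^\mu$ indexed by equivariant roots; it does not differentiate the deformation/equivariant parameters of $\mathcal{X}$ at all. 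Since $\Lambda(a)\in\mathcal{A}_0$ involves no $z^\mu$ whatsoever, the derivative part commutes with $L_{\Lambda(a)}$ for trivial reasons (no $p$-th powers needed), and the multiplication part $y\cdot$ commutes with $L_{\Lambda(a)}$ precisely because $\Lambda(a)\in Z(\mathcal{A})$. In other words, the whole proposition is a formal consequence of centrality and weight-zero: \emph{any} central weight-zero element of $\mathcal{A}$ acts this way, and the normalization $\Lambda(f)\equiv f^p \bmod \hbar^{p-1}$ from \cref{defn:frob-const-quant} is never used. The Frobenius twist is only responsible for \emph{producing} central elements (that is the content of the hypothesis $\Lambda(\mathcal{O}(\mathcal{X})^{(1)})\subseteq Z(\mathcal{A})$, which in examples is indeed proved via $\partial(x^p)=0$-type computations inside $\mathcal{A}$ — but that is an input to this proposition, not a step of its proof).

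Consequently your ``main obstacle'' — building a dictionary between the base parameters of $\mathscr{B}(\mathcal{A})$ and the Frobenius twist, and tracking how equivariant parameters appear as $p$-th powers in $\Lambda(a)$ — is a mirage: there is nothing to track, because the connection on $M_{eq}$ acts in the $z^\mu$-directions only, and those variables do not occur in $\Lambda(a)$. If you pursued the dictionary you describe, you would be proving (a shadow of) the much harder statement that $\Lambda(\bar{a})$ agrees with the $p$-curvature of $R_{eq}$ (\cref{conj:pcurv-for-trace}), which is example-dependent and is not what this proposition asserts. The fix is short: write the commutator as $[\nabla_y,L_{\Lambda(a)}]=L_{[y,\Lambda(a)]}+\hbar\langle\mu,\bar{y}\rangle\bigl(L_{\Lambda(a)}-L_{\Lambda(a)}\bigr)=0$, citing only centrality.
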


On the quantum $D$-module side, \emph{quantum Steenrod operations} \cite{Fuk97, Wil20} and their equivariant generalizations \cite{Lee23a, Lee23b} are deformations of the classical Steenrod operations on cohomology with $\mathbb{F}_p$-coefficients, in the same way the quantum cohomology is a deformation of the cohomology ring with the cup product. They arise from suitable $\mathbb{Z}/p$-equivariant counts of genus $0$ stable maps. It is proven in \cite{seidel-wilkins, Lee23a} that the quantum Steenrod operations are covariantly constant with respect to the quantum connection and commute with each other, thereby giving rise to central endomorphisms of quantum $D$-modules (\cref{cor:qst-acts-on-Dmod}):

\begin{prop}[\cref{cor:qst-acts-on-Dmod}]
    Fix a $T^!$-equivariant cohomology class $x \in H^*_{T^!}(X^!;k)$. Then the quantum Steenrod operator $\Sigma^{T^!}_{x}$ deforming the Steenrod operations of $x$ acts on the $T^!$-equivariant quantum $D$-module as a central endomorphism.
\end{prop}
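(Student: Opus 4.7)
The plan is to verify the two defining conditions for a central endomorphism of the quantum $D$-module, namely that $\Sigma^{T^!}_{x}$ gives a well-defined endomorphism of the underlying module and that it commutes with the flat quantum connection $\nabla^{\mathrm{QH}}$. Both conditions will follow essentially formally from the structural results on quantum Steenrod operations established in \cite{Fuk97, Wil20, seidel-wilkins, Lee23a, Lee23b}. I would begin by recalling that $\Sigma^{T^!}_{x}$ is an $\mathbb{F}_p$-linear operator on $H^*_{T^!}(X^!;k)[[z]]$ (suitably Novikov-completed), defined by $\mathbb{Z}/p$-equivariant counts of genus zero stable maps with a distinguished marked point and $p$ further marked points cyclically permuted by $\mathbb{Z}/p$ and each carrying the class $x$, and check that it descends to a morphism of the underlying module of the $T^!$-equivariant quantum $D$-module after accounting for the Frobenius twist discussed below.

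The core technical input, which I would simply invoke, is the covariant constancy statement $[\nabla^{\mathrm{QH}}_\alpha, \Sigma^{T^!}_{x}] = 0$ proved in \cite{seidel-wilkins} in the non-equivariant setting and extended to the $T^!$-equivariant case in \cite{Lee23a}; the proof in the cited works proceeds by analyzing the boundary of a suitable one-parameter family of moduli spaces of $\mathbb{Z}/p$-equivariant stable maps. Combined with the mutual commutativity $[\Sigma^{T^!}_{x}, \Sigma^{T^!}_{x'}] = 0$ established in the same references, this places $\Sigma^{T^!}_{x}$ in the center of the endomorphism algebra of the quantum $D$-module, as the centralizing relations with quantum multiplication are subsumed by the covariant constancy (since the quantum product operators appear in the definition of $\nabla^{\mathrm{QH}}$).

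The only point that requires real care---and which I regard as the main bookkeeping obstacle rather than a conceptual one---is the Frobenius-twisted linearity of the quantum Steenrod operations: in contrast to quantum multiplication, they are not linear over the base ring $R$ of equivariant and Novikov parameters but rather linear after applying the Frobenius endomorphism of $R$. This must be reconciled with the $R$-linear $D$-module structure, and the natural reconciliation is via the Frobenius pullback formalism already implicit in the characteristic $p$ formulation of \cref{defn:Dmod-quantum}. Once this bookkeeping is carried out, the proposition is an immediate repackaging of the cited geometric results, and no further enumerative input is required.
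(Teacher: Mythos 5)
Your proposal is correct and follows essentially the same route as the paper: \cref{cor:qst-acts-on-Dmod} is proved in two lines by observing that the $R_{Kah,reg}$-module structure on $M_{Kah,reg}$ is generated by the operators $\nabla_{x'}|_{\hbar=t}$ for $x'\in H^2_{T^!\times\mathbb{G}_m}(X^!)$ and then invoking the covariant constancy $[\Sigma^{T^!}_x,\nabla_{x'}]=0$ of \cref{thm:seidel-wilkins} (Seidel--Wilkins, extended equivariantly in \cite{Lee23a,Lee23b}), exactly as you do. Two minor remarks: the mutual commutativity $[\Sigma_x,\Sigma_{x'}]=0$ is not needed for this statement, and the ``Frobenius-twist'' issue you flag is not actually an obstruction here since $\Sigma^{T^!}_x$ is genuinely linear over the Novikov and equivariant base as an \emph{operator} (the semilinearity is only in the subscript $x$) --- the one piece of bookkeeping the paper does make explicit, and which you should state, is the specialization $t=\hbar$ identifying the $\mu_p$-equivariant parameter of the source curve with the conical $\mathbb{G}_m$-parameter, so that $\Sigma^{T^!}_x|_{\hbar=t}$ acts on the \emph{specialized} quantum $D$-module.
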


The following is a description of our conjecture. A more precise statement, which we dub the \emph{mod p quantum Hikita conjecture}, is given in \cref{conj:hikita-quantum-p}. 
 
\begin{conj}[Slogan, see \cref{conj:hikita-quantum-p}]\label{conj:intro}
    The $D$-module of twisted traces $M_{eq}$ of $X$ is isomorphic to the equivariant quantum $D$-module $M^!_{Kah}$ of $X^!$ with $k$-coefficients:
        \begin{equation*}
            M_{eq} := \mathscr{B} (\mathcal{A})_k \cong QH^*_{T^! \times \mathbb{G}_m}(X^!;k) = : M^!_{Kah}.
        \end{equation*}
    Under this isomorphism, the central endomorphisms of $M_{eq}$ induced from the Frobenius-constant quantizations of $X$ are identified with the central endomorphisms of $M_{Kah}^!$ given by quantum Steenrod operators.
\end{conj}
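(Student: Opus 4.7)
The plan is to split the conjecture into two tasks: establishing the underlying mod $p$ quantum Hikita isomorphism $\mathscr{B}(\mathcal{A})_k \cong QH^*_{T^! \times \mathbb{G}_m}(X^!;k)$ of $D$-modules, and then matching the central endomorphisms on each side. For the first step, my instinct is to base change the characteristic-zero quantum Hikita isomorphism to $k$: both sides are built from data that extend naturally over $\mathrm{Spec}\,\mathbb{Z}_{(p)}$---the $B$-algebra of a flat family of quantizations, and the quantum connection built from integral genus-zero curve counts---so for $p$ large enough one expects a clean reduction. When an independent characteristic-zero statement is unavailable, one instead constructs the $D$-module isomorphism directly for the examples at hand; in the cases of \cref{thm:example} this is tractable via the available explicit descriptions of both sides.

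For matching the endomorphisms, my strategy is to treat both $\Lambda$ and $\Sigma^{T^!}_{x}$ as ``quantum lifts of Frobenius''. The Frobenius-constant quantization sends $a \in \mathcal{O}(\mathcal{X})_0$ to a central lift of $a^p$ in $\mathcal{A}$, while the quantum Steenrod operation reduces, on the constant-map component of the relevant moduli space, to the classical Frobenius/Steenrod power on $H^*_{T^!}(X^!;k)$. I would therefore aim to characterize both families by (a) their classical limit, which should match under the classical Hikita--Nakajima isomorphism, and (b) a rigidity statement to the effect that a central endomorphism of the relevant $D$-module with prescribed classical limit is uniquely determined. The covariant-constancy of quantum Steenrod along the quantum connection, together with an analogous flatness of $\Lambda$ along the deformation parameters of $\mathcal{X}$, is the mechanism that would propagate the classical match to the full $D$-module.

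For the specific examples in \cref{thm:example}, the plan becomes computational. In the Springer case, the Frobenius-constant quantization is the Bezrukavnikov--Mirkovic--Rumynin lift, whose center combines the Harish-Chandra center with the $p$-center; on the mirror side, equivariant quantum Steenrod on $T^*(G/B)$ can be attacked by torus localization and Maulik--Okounkov stable envelopes, supplemented by the $\mathbb{Z}/p$-equivariant genus-zero techniques of \cite{seidel-wilkins, Lee23a}. In the hypertoric case, both sides admit fully combinatorial descriptions: the Frobenius-constant quantization factors through the hypertoric enveloping algebra of Braden--Licata--Proudfoot--Webster, and quantum Steenrod on the Gale-dual hypertoric variety is accessible by toric abelianization and localization.

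The main difficulty I foresee is the matching of normalizations and the treatment of higher operations. Both $\Lambda$ and $\Sigma^{T^!}_{x}$ are determined only up to genuinely lower-order corrections---in $\Lambda$'s case from the choice of Frobenius lift, in the Steenrod case from choices of equivariant lift---so one cannot compare ``the'' quantum Frobenius on each side without first pinning down natural normalizations. Moreover, quantum Steenrod produces higher operations $\Sigma^{T^!}_{x_1, \ldots, x_k}$ whose counterparts on the Frobenius-constant-quantization side are not immediately visible; matching these presumably requires promoting $\Lambda$ to a nonlinear ``total'' Frobenius lift whose expansion encodes the higher Steenrod structure. Establishing such a refinement, and verifying the match in a coordinate-free way, is where the main technical effort would concentrate.
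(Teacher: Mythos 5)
Your overall decomposition (base-change the characteristic-zero quantum Hikita isomorphism, then match the central endomorphisms example by example) agrees with the paper's, and your identification of the explicit inputs (the Bezrukavnikov--Mirkovic--Rumynin lift for $T^*(G/B)$, the hypertoric enveloping algebra and Lonergan's construction for the Coulomb side) is on target. But the mechanism you propose for the matching step has a genuine gap. You want to characterize both $\Lambda(a)$ and $\Sigma^{T^!}_x$ as the unique covariantly constant central endomorphism with a prescribed classical ($z\to 0$) limit, and let flatness ``propagate'' the match. That rigidity statement is false in general: the algebra of flat endomorphisms of the quantum $D$-module in characteristic $p$ contains nilpotent elements vanishing at $z=0$, so a flat endomorphism is not determined by its classical limit. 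This is precisely the content of \cref{thm:qst-is-pcurv}, which only gives $\Sigma_x^{T^!} = \nabla_x^p - t^{p-1}\nabla_x + N$ with $N$ nilpotent; removing $N$ requires the additional input that the degree-$2$ quantum multiplication operators have jointly simple spectrum (\cref{thm:qst-is-pcurv-onthenose}), verified for $T^*(G/B)$ via the Weyl-orbit weights of line bundles and for hypertoric varieties via the Harada--Holm fixed-point formula. Your proposal hides exactly this semisimplicity argument inside an unproven uniqueness claim.

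The paper's actual route avoids rigidity altogether by introducing a common intermediary: the Artin--Schreier/$p$-curvature operator $s(a) = a^p - \hbar^{p-1}a^{[p]}$ coming from the restricted quantized algebra structure on $R_{eq}\cong R^!_{Kah}$ (\cref{prop:restricted-quantized-structure}). Since this operator lives in the ring of differential operators itself, it is automatically intertwined by any isomorphism of $D$-modules compatible with $R_{eq}\cong R^!_{Kah}$; the problem then splits into \cref{conj:pcurv-for-trace} (a direct computation that $\Lambda(\bar a) = s(a)$ on degree-$2$ generators, via crystalline differential operators resp.\ Lonergan's Steenrod-linearity) and \cref{conj:pcurv-for-quantum} (the semisimplicity argument above). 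Your concern about ``higher operations'' is also misplaced relative to this strategy: the quantum Cartan relation $\Sigma^{T^!}_{x\star x'} = \pm\,\Sigma^{T^!}_x\circ\Sigma^{T^!}_{x'}$ and the fact that $\Lambda$ is an algebra map reduce everything to degree $2$, and both $\mathcal{A}_0$ and $H^*_{T^!\times\mathbb{G}_m}(X^!)$ are generated in degree $2$ in these examples; no separate matching of higher Steenrod-type operations is needed. Likewise the normalization ambiguity you flag is resolved by fixing the preferred Frobenius-constant quantizations rather than by any abstract uniqueness.
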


\begin{rem}
    One can take the ``classical limit'' of the \cref{conj:intro} by specializing the deformation parameters involved in the definition of $M_{eq}$ and $M^!_{Kah}$ to zero, thereby obtaining a mod $p$ Hikita--Nakajima conjecture. In this case, our conjecture specializes to the claim that the isomorphism in the Hikita--Nakajima conjecture intertwines the action of $B(\mathcal{O}(\mathcal{X})) \cong H^*_{T^!}(X^!)$ given by the Frobenius-constant quantizations on the trace side and the \emph{classical} Steenrod operations on the quantum side.
\end{rem}

\begin{rem}
    We emphasize the asymmetric roles played by the mod $p$ coefficients on two sides---on one hand as the coefficients of topological invariants, and on the other hand as the ground ring. Such asymmetry is a salient feature of duality statements. The same philosophy appears, for example, in arithmetic homological mirror symmetry \cite{lekili-polishchuk, seidel-formal} and the geometric Satake correspondence \cite{mirkovic-vilonen}, where the coefficients of the cohomology on one side correspond to the ground ring of definition on the other side.
\end{rem}

\subsection{Methods}
To provide evidence to our conjecture, we verify its validity in the following examples.

\begin{thm}\label{thm:example}
    Conjecture \ref{conj:intro} for $p \gg 0$ holds for dual Springer resolutions $T^* G/B \leftrightarrow T^* G^!/ B^!$ and (Gale) dual hypertoric varieties.
\end{thm}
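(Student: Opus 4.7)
The plan is to verify \cref{thm:example} separately for the Springer and hypertoric cases, in each instance first upgrading the characteristic zero quantum Hikita isomorphism (which is already established in the cited literature for these examples) to characteristic $p$ for $p \gg 0$, and then matching the Frobenius-constant central endomorphisms on the trace side with the quantum Steenrod endomorphisms on the quantum side. I would treat the hypertoric case first, as the relevant objects are more combinatorial and explicit, before turning to Springer resolutions.

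For a pair of Gale-dual hypertoric varieties $X$ and $X^!$, both $\mathscr{B}(\mathcal{A})$ and $QH^*_{T^! \times \mathbb{G}_m}(X^!; \mathbb{C})$ admit explicit presentations as quotients of a polynomial algebra by relations indexed by the circuits and cocircuits of the defining hyperplane arrangement, and the quantum Hikita isomorphism is realized as the Gale-duality swap of these presentations. I would first verify that these presentations are flat over $\mathbb{Z}_{(p)}$ for all but finitely many primes, so that the isomorphism descends to $k$. On the trace side, a Frobenius-constant quantization of the hypertoric enveloping algebra is available by general construction, and I would compute the image $\Lambda(a)$ of a weight-zero element $a \in \mathcal{O}(\mathcal{X})_0$ as an explicit polynomial in the generators of $\mathscr{B}(\mathcal{A})_k$. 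On the quantum side, the $T^!$-equivariant quantum Steenrod operator admits closed-form evaluation via $\mathbb{Z}/p$-equivariant toric localization, extracted either by running the Seidel--Wilkins argument in the hypertoric setting or by pulling back quantum Steenrod operations from products of projective spaces through the abelianization map. The required identification then reduces to a polynomial identity that is manifestly symmetric under Gale duality.

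For the Springer case, the quantum Hikita isomorphism over $\mathbb{C}$ is an incarnation of Langlands duality between $G$ and $G^!$: both $\mathscr{B}(\mathcal{A})$ for $\mathcal{A} = U(\mathfrak{g})$ and $QH^*_{T^! \times \mathbb{G}_m}(T^* G^!/B^!)$ are governed by the trigonometric Dunkl connection, with the quantum side realized through Kim's presentation and the trace side through Harish-Chandra theory. After checking that the comparison is defined over $\mathbb{Z}[1/N]$ and therefore descends to $k$ whenever $p \nmid N$, I would identify the Frobenius-constant endomorphisms using Bezrukavnikov--Mirkovi\'c--Rumynin: the Frobenius center of crystalline differential operators on $G/B$ is generated by $\mathcal{O}(\mathfrak{g}^{*(1)})$, whose image in $\mathscr{B}(\mathcal{A})_k$ can be written in Chevalley coordinates. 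On the quantum side, I would combine Kim's presentation with the equivariant quantum Steenrod formalism of \cite{Lee23a, Lee23b} to recognize $\Sigma^{T^!}_x$ as the matching mod $p$ deformation; the Langlands duality of presentations would then produce the desired intertwining.

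The main obstacle is the absence of an explicit closed formula for equivariant quantum Steenrod operators on $G^!/B^!$ beyond special cases. Rather than pursuing a direct enumerative match, my preferred route is to establish a \emph{uniqueness} statement: any central deformation of the mod $p$ quantum $D$-module that is flat for the quantum connection and specializes correctly to the classical Steenrod powers must coincide with the quantum Steenrod endomorphism. Verifying that the image of the Frobenius-constant quantization under the quantum Hikita isomorphism also satisfies this characterization would furnish a uniform argument applicable to both examples, sidestepping the case-by-case enumerative computation while pinning down the identification.
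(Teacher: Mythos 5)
Your skeleton is right in outline --- split into the two families, descend the characteristic-zero isomorphism of \cite{KMP21} to $k$ for $p\gg 0$, use Bezrukavnikov--Mirkovi\'c--Rumynin to describe $\Lambda$ for the Springer resolution, and reduce to degree~$2$ --- but the mechanism you propose for the quantum side has a genuine gap, and it is exactly the point where the paper has to work hardest. Your fallback ``uniqueness'' principle (any flat central endomorphism of the mod $p$ quantum $D$-module specializing correctly to classical Steenrod powers equals $\Sigma_x^{T^!}$) is false as stated in characteristic $p$. The difference of two such operators is a covariantly constant endomorphism $\Phi(z)$ vanishing at $z=0$; writing the connection as $t\partial_{\bar x}+A(z)$ and expanding $\Phi=\sum_k\Phi_k z^k$, the flatness recursion reads $tk\,\Phi_k=[\Phi_k,A_0]+(\text{lower order})$, and this fails to determine $\Phi_k$ whenever $p\mid k$ because $tk=0$. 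This is precisely why \cref{thm:qst-is-pcurv} (quoted from \cite{Lee23b}) only gives $\Sigma_x^T=\nabla_x^p-t^{p-1}\nabla_x+N$ with $N$ nilpotent, and why removing $N$ requires the extra input of \cref{thm:qst-is-pcurv-onthenose}: one must exhibit a jointly simple spectrum for the degree-$2$ quantum multiplication operators. Without that semisimplicity argument your characterization does not pin down $\Sigma_x^{T^!}$, and your alternative route for hypertoric varieties (computing $\Sigma_x^{T^!}$ by equivariant toric localization or by pulling back through abelianization) is not established technology --- there is currently no algebro-geometric or Kirwan-map construction of quantum Steenrod operations to pull back along.

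The paper's actual strategy replaces your explicit-comparison/uniqueness plan with a structural one: it equips $R_{eq}\cong R^!_{Kah}$ with a restricted quantized algebra structure (\cref{prop:restricted-quantized-structure}), so that both $D$-modules carry a canonical $p$-curvature operator $s(a)=a^p-\hbar^{p-1}a^{[p]}$ that is automatically matched under the $D$-module isomorphism. The theorem then reduces to two independent identifications: $\Lambda(\bar a)=s(a)$ on the trace side (\cref{conj:pcurv-for-trace}, verified by the BMR formula for $T^*G/B$ and by a Weyl-algebra/Lonergan computation for hypertoric varieties) and $\Sigma_x^{T^!}=s(x)$ on the quantum side (\cref{conj:pcurv-for-quantum}, verified via the simple-spectrum criterion --- for $T^*(G^!/B^!)$ using genericity of Weyl orbits of a dominant weight, and for hypertoric varieties using the Harada--Holm fixed-point formula). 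Degree-$2$ generation plus the algebra-map property of $\Lambda$ and the quantum Cartan relation (\cref{prop:qst-properties}) then extend the identification to all of $\mathscr{B}(\mathcal{O}(\mathcal{X}))$. To repair your proposal you would need to either import the $p$-curvature theorem of \cite{Lee23b} together with a simple-spectrum verification in each example, or strengthen your uniqueness criterion to include simultaneous diagonalizability with respect to the commuting family $\{\nabla_y^p-t^{p-1}\nabla_y\}$ --- at which point you have reconstructed the paper's argument.
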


For $k = \mathbb{C}$, the identification of $D$-modules is \cite[Theorem 1.2]{KMP21}, and establishing such an isomorphism with characteristic $p$ coefficients is a straightforward extension. Matching the central endomorphisms on two sides is the true content, whose proof should be interesting on its own. We briefly highlight the underlying idea as follows.

Consider the Weyl algebra over $\mathbb{F}_p$
\begin{equation*}
    \mathcal{D}_\hbar(\mathbb{A}^1) := \mathbb{F}_p[\hbar]\langle x, \partial\rangle / ([\partial, x] = \hbar).
\end{equation*}
Then due to the characteristic $p$ nature, it is straightforward to check that $x^p$ and $\partial^p$ are central elements. A more interesting central element is the so-called \emph{Artin--Schreier} expression,
\begin{equation*}
    x^p \partial^p = \prod_{i=0}^{p-1} (x \partial - i \hbar) = (x \partial)^p - \hbar^{p-1}(x\partial).
\end{equation*}
It arises naturally in the following three contexts.
\begin{enumerate}
    \item As described above, quantization of Poisson algebras in characteristic $p$ admit a ``large center'': the image of $xy \in \mathbb{F}_p[x,y] = \mathcal{O}(T^*\mathbb{A}^1)$ under the Frobenius-constant quantization $\Lambda: \mathcal{O}(T^*\mathbb{A}^1)^{(1)} \to Z(\mathcal{D}_\hbar (\mathbb{A}^1))$ is given by
    \begin{equation*}
        \Lambda(xy) = (x\partial)^p - \hbar^{p-1} (x\partial).
    \end{equation*}
    \item Let $T$ be the $1$-dimensional complex torus; then $H^*(BT; \mathbb{F}_p) = \mathbb{F}_p [\![u]\!]$ where $\mathrm{deg}(u) = 2$. Then the total Steenrod operation $\mathrm{St}_p$ satisfies
    \begin{equation*}
        \mathrm{St}_{p}(u) = u^{p} - \hbar^{p-1}u,
    \end{equation*}
    where $\hbar$ is the degree $2$ element of the group cohomology of $\mathbb{Z}/p$ with $\mathbb{F}_p$-coefficients.
    \item Consider a ($\hbar$-)connection over $\mathbb{F}_p(\!(z)\!)$ with regular singularity at $z = 0$
    \begin{equation*}
        \nabla_{z\partial_z} = \hbar z\frac{d}{dz} + A(z), \quad \quad \quad A \in \mathrm{Mat}(n, \mathbb{F}_p[\![z]\!]),
    \end{equation*}
    then the \emph{$p$-curvature} of $\nabla$, which is a $\mathbb{F}_p(\!(z)\!)$-linear endomorphism, has the form
    \begin{equation*}
        \nabla^p_{z\partial_z} - \hbar^{p-1}\nabla_{z\partial_z}.
    \end{equation*}
\end{enumerate}
Our proof of Theorem \ref{thm:example} is based on reducing the covariantly constant endomorphisms of the $D$-modules to Artin--Schreier type expressions.

For the $D$-module of twisted traces, we show that the image of Frobenius-constant quantization map $\Lambda: \mathcal{O}(\mathcal{X})^{(1)} \to Z(\mathcal{A})$ from the universal deformation of $X$ acts by left multiplication (\cref{prop:frob-acts-on-Dmod}). The map $\Lambda$ generalizes Context (1) from quantization theory to the symplectic resolutions we consider. The map $\Lambda$ can be described explicitly for the Springer resolution $T^*G/B$  \cite{bezrukavnikov-mirkovic-rumynin} and for hypertoric varieties (Coulomb branches of abelian gauge theories) \cite{lonergan}. These are both eventually described by the ring of crystalline differential operators, in which the Artin--Schreier expressions enter in a direct way.

\begin{rem}
    Our result is greatly inspired by the pioneering work of Lonergan \cite{lonergan}, in which quantized Coulomb branches with $\mathbb{F}_p$-coefficients are given a Frobenius-constant structure with Frobenius center constructed from Steenrod operations over the (Borel--Moore) homology of the space of Braverman--Finkelberg--Nakajima triples. Therefore, these central endomorphisms for Coulomb branches are eventually related to Context (2).
\end{rem}

For quantum $D$-modules, Context (3) enters through the conjecture of the second-named author \cite{Lee23b}. Here, let $X^!$ be a $T^{!}$-equivariant conical symplectic resolution over $\mathbb{C}$. In \cite{Lee23b}, generalizing the work of \cite{Fuk97, Wil20, seidel-wilkins} to the $T^!$-equivariant setting, central endomorphisms $\Sigma^{T^!}$ of the quantum $D$-module were constructed and related to the $p$-curvature of the quantum $D$-module: 

\begin{conj}\label{conj:p-curvature}
    Given a degree $2$ cohomology class $x \in H^2_{T^!}(X^! ; \mathbb{F}_p)$, the $T^!$-equivariant quantum Steenrod operator along $x$ (cf. Section \ref{ssec:oper-qst}) $\Sigma^{T^!}_x$ satisfies
    \begin{equation*}
        \Sigma^{T^!}_x = (\nabla_x^{T^!})^p - t^{p-1} \nabla_x^{T^!},
    \end{equation*}
    where $\nabla_x^{T^!} = t \partial_x + x \star_{T^!}$ is the differential operator defining the quantum $D$-module. Here, $t$ with $\deg(t) = 2$ is the \emph{loop rotation} equivariant parameter and $\star_{T^!}$ is the ${T^! \times \mathbb{G}_m}$-equivariant quantum multiplication.
\end{conj}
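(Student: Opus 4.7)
The plan is to prove the identity by reducing it to Artin--Schreier combinatorics that appear naturally on both sides. The left-hand side $\Sigma^{T^!}_x$ is defined as a $\mathbb{Z}/p$-equivariant count of genus zero stable maps with $p$ cyclically permuted insertions of $x$, whereas the right-hand side is built from iterated applications of the quantum connection. The underlying mechanism is that the operator $(\nabla_x^{T^!})^p - t^{p-1}\nabla_x^{T^!}$ extracts precisely the contributions symmetric under cyclic permutation of the $p$ insertion points, matching the $\mathbb{Z}/p$-equivariant count that defines $\Sigma^{T^!}_x$.

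As a first reduction, I would verify the identity modulo the Novikov parameters. In this limit $\nabla_x^{T^!} = t\partial_x + x\cup$, where $\partial_x$ and $x\cup$ commute (because the classical cup product does not depend on the K\"ahler parameter). The characteristic $p$ binomial theorem then yields
\begin{equation*}
    (t\partial_x + x\cup)^p - t^{p-1}(t\partial_x + x\cup) = t^p(\partial_x^p - \partial_x) + (x^p - t^{p-1}x)\cup.
\end{equation*}
On constant sections (where $\partial_x$ vanishes) this acts as multiplication by $x^p - t^{p-1}x$, which is exactly the classical total mod $p$ Steenrod power on a degree $2$ class as highlighted in Context (2) of the introduction. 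This provides the base case of the induction.

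For the genuinely quantum contributions, the natural strategy is a $\mathbb{Z}/p$-equivariant degeneration or cobordism argument in the spirit of \cite{seidel-wilkins, Lee23a, Lee23b}. One expands $(\nabla_x^{T^!})^p$ geometrically as a sum over configurations of $p$ ordered marked points carrying $x$-insertions, then identifies the cyclic symmetrization of this sum with a virtual localization on the $\mathbb{Z}/p$-fixed loci of the moduli of stable maps with $p$ permuted markings. The fixed loci decompose into ``constant'' strata (reproducing the classical Steenrod piece) and ``$p$-fold cover'' strata (accounting for quantum corrections), while the linear subtraction $-t^{p-1}\nabla_x^{T^!}$ corrects for the difference between ordered and cyclic enumeration, exactly mirroring the Artin--Schreier identity $(a\partial)^p - \hbar^{p-1}(a\partial) = a^p \partial^p$ from the algebraic setting.

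The main obstacle is making the equivariant virtual intersection theory rigorous on $\mathbb{Z}/p$-fixed loci of stable maps with cyclically permuted markings, especially in the full $T^! \times \mathbb{G}_m$-equivariant setting. Controlling the obstruction bundle contributions and handling equivariant transversality with a $\mathbb{Z}/p$-action are delicate; additionally, the interaction between the loop-rotation parameter $t$ and the Hamiltonian torus parameters must be tracked consistently through the $p$-th power operation. These difficulties are likely why the conjecture has so far only been established in selected cases, and a general proof may require either new equivariant transversality techniques or an indirect route via the trace-side $D$-module and the symplectic duality of \cref{conj:intro}, reducing the $p$-curvature identity to the explicit Frobenius-constant structure on $\mathcal{A}$.
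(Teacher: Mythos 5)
The statement you are addressing is stated in the paper as a \emph{conjecture}: it is not proved in general, and the paper only establishes it for Springer resolutions (quoting \cite{Lee23b}, cf.\ \cref{thm:qst-is-pcurv-springer}) and for smooth hypertoric varieties (\cref{prop:qst-is-pcurv-hypertoric}). Your proposal does not close this gap. The decisive step in your outline --- identifying the cyclic symmetrization of the geometric expansion of $(\nabla_x^{T^!})^p$ with a virtual localization on the $\mathbb{Z}/p$-fixed loci of stable maps with $p$ permuted markings, and showing that the subtraction of $t^{p-1}\nabla_x^{T^!}$ accounts exactly for the ordered-versus-cyclic discrepancy --- is precisely the hard geometric content, and you explicitly defer it as ``the main obstacle.'' As written, the only part of the argument actually carried out is the specialization at $z^\alpha = 0$, which recovers \cref{prop:qst-properties}(iv) together with the identity $\mathrm{St}(x) = x^p - t^{p-1}x$ for a degree $2$ class admitting an integral lift; this is a consistency check, not the ``base case of an induction,'' since no induction over Novikov degrees is set up here or in the sources you cite.

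The route actually taken in the paper (following \cite{Lee23b}) is different in kind. The geometric input is limited to two black boxes: covariant constancy $[\Sigma_x^{T}, \nabla_y] = 0$ (\cref{thm:seidel-wilkins}) and the statement that $\Sigma_x^T - (\nabla_x^p - t^{p-1}\nabla_x)$ is \emph{nilpotent} (\cref{thm:qst-is-pcurv}). The exact identity is then deduced by linear algebra: both $\Sigma_x^T$ and the $p$-curvature $F_x$ commute with the whole family $\{F_y\}_{y \in H^2_T}$, so if the operators $y \star_T$ have jointly simple spectrum (equivalently, their $t=0$ limits do), everything is simultaneously diagonalizable and the nilpotent difference must vanish (\cref{thm:qst-is-pcurv-onthenose}). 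The example-specific work consists solely of verifying the simple-spectrum hypothesis by fixed-point localization. If you wish to go beyond the known cases, the realistic options are to remove the semisimplicity hypothesis from this endgame, or to actually carry out the equivariant degeneration you sketch --- but the latter is at present an open problem, not a proof.
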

Conjecture \ref{conj:p-curvature} is established in \cite{Lee23b} for Springer resolutions and we verify it for hypertoric varieties in this paper. 

Then, by appealing to the isomorphisms established in the original quantum Hikita conjecture for Springer resolutions and hypertoric varieties \cite{KMP21}, we prove Theorem \ref{thm:example} by a direct comparison. Namely, we show that the action from ``degree 2'' classes can be identified with the $p$-curvature of the respective $D$-modules, and appeal to the fact that such degree $2$ classes determine the action in these examples as they generate.

\subsection{Related work and speculations}

The fact that the conical symplectic resolutions arising in 3D mirror symmetry have lifts to integer coefficients was observed as early as in Braverman--Finkelberg--Nakajima's original work in Coulomb branches of gauge theories, \cite[Remark 2.4]{BFN2}. In particular, by taking mod $p$ reductions, we naturally obtain objects of arithmetic algebraic geometry.

This aspect was leveraged crucially in the inspirational work of Lonergan \cite{lonergan}, where Frobenius-constant quantizations of mod $p$ Coulomb branches are constructed via Steenrod operations (the version for equivariant Borel--Moore homology). Lonergan's work highlights the important similarity between Frobenius-constant quantizations and Steenrod operations which serves as the starting point of our discussion: for dual pairs of symplectic resolutions given by Higgs/Coulomb branches in supersymmetric gauge theories, our conjecture can be interpreted as the claim that the (quantum) Steenrod operations in the mod $p$ cohomology of Higgs branches are dual to Lonergan's Frobenius-constant quantization.

The relationship between quantum Steenrod operations and $p$-curvature of quantum connections was noted in \cite{Lee23b}. In the context of arithmetic 2D mirror symmetry, the relationship between quantum Steenrod operations and $p$-curvature type expression in Cartan calculus in de Rham cohomology is also discussed in \cite{zihong-arxiv}. Our observation that Frobenius-constant quantizations act by $p$-curvature operators on the $D$-module of twisted traces is drawn from the construction of the Frobenius-constant quantizations themselves as $p$-curvature in crystalline differential operators in many examples \cite{bezrukavnikov-mirkovic-rumynin, bezrukavnikov-finkelberg-ginzburg, stadnik}.

We collect more speculative connections with other works as remarks below.

\begin{rem}
    Our ``quantum Hikita enhanced by power operations'' conjecture has a $K$-theoretic analogue involving $q$-difference modules, in which a $K$-theoretic version of Frobenius-constant quantizations (see \cite[Section 4]{lonergan}) and a $K$-theoretic analog of the quantum Steenrod operations (the ``quantum Adams operations'') in quantum $K$-theory play a role. The quantum Adams operations were defined by the authors in \cite{qAdams}, and they can be interpreted as enumerative geometric manifestations of the operators constructed in \cite{koroteev-smirnov}.
\end{rem}

\begin{rem}
    In view of discussion of ``BFN Springer theory'' in \cite{BFN-Springer}, there is a heuristic geometric picture behind Conjecture \ref{conj:intro} for the dual pair arising as Higgs/Coulomb branches of 3d $\mathcal{N}=4$ gauge theories. Given a reductive Lie group $G$ and a complex representation $V$, as discussed in \emph{loc. cit.}, the flat sections of quantum $D$-modules of Higgs branches associated with $(G,V)$, known as vertex functions, can be viewed as equivariant volumes of spaces showing up in the construction of the Coulomb branch associated with $(G,V)$. The similarity between the moduli spaces defining the quantum Steenrod operators and Lonergan's Frobenius-constant quantization can be viewed as a $\mathbb{Z}/p$-equivariant generalization of this philosophy. We expect that a refinement of our argument can be used to prove Conjecture \ref{conj:intro} for pairs of Higgs and Coulomb branches. A provisional long term goal of the project initiated in this paper is to understand the modular representation theory of Coulomb branches via such relationships.
\end{rem}

\begin{rem}
    In the upcoming work of Dinkins--Karpov--Krylov, the following strategy for proving the quantum Hikita conjecture for Higgs/Coulomb dual pairs for $(G,V)$ is given. First one considers the (trivial) $D$-module $H^*_{G \times T^! \times \mathbb{G}_m}(\mathrm{pt})[\![z]\!] := H^*_{G \times T^! \times \mathbb{G}_m}(\mu^{-1}(0))[\![z]\!]$ from the stack $[\mu^{-1}(0)/G] \subseteq [T^*V/G]$ where $\mu^{-1}(0)/\!\!/G$ is the Higgs branch. Then, one expresses the quantum $D$-module and the $D$-module of twisted traces as certain quotients of this $D$-module together with an identification of their solutions. The quotient map to the quantum $D$-module is given by a version of the quantum Kirwan map in quasimap quantum cohomology, and the solutions obtained as the image of these maps are the (capped) vertex functions \cite[Section 7]{Oko17}. The mod $p$ version of this statement should be related to the recent work of \cite{xu-qst} relating the classical Steenrod operations in $H^*_{G \times T^! \times \mathbb{G}_m}(\mathrm{pt})$ with quantum Steenrod operations via the quantum Kirwan map. It should be interesting to understand the relationship of this approach with the mod $p$ (truncations of) vertex functions constructed in \cite{smirnov-varchenko}.
\end{rem}

\subsection{Organization of the paper}

In section 2, we set our notations and conventions for symplectic resolutions (with a conical $\mathbb{G}_m$-action and a Hamiltonian torus action), and survey the formulation of symplectic duality via the quantum Hikita conjecture.

In section 3, we introduce the covariantly constant endomorphisms on the $D$-modules in the quantum Hikita conjecture in positive characteristic: the quantum Steenrod operators and the multiplication operators from Frobenius-constant quantizations. Then we formulate a precise version of our conjecture.

In section 4, the verification of the conjecture is provided for the case of Springer resolutions and hypertoric varieties (Higgs/Coulomb branches for abelian gauge theories). These go through the identification of the \emph{p-curvature} of the respective $D$-modules and a ``generation in degree $2$'' statement.

\subsection{Acknowledgements} The first author thanks Andrei Okounkov for illuminating discussions on Frobenius structures in enumerative geometry. The second author thanks Dan Pomerleano for introducing the inspiring paper of \cite{lonergan} to him. We thank Zihong Chen, Sanath Devalapurkar, Vasily Krylov, Calder Morton-Ferguson, Nicholas Proudfoot, Mohan Swaminathan, Ben Webster for helpful discussions at various stages of this project. In particular, the suggestion to look into quantum Hikita conjecture \cite{KMP21} was independently given to the second author by Vasily Krylov, Ben Webster, and the first author. We also thank Hunter Dinkins, Ivan Karpov, and Vasily Krylov for explaining their forthcoming work on quantum Hikita conjecture. Finally, we thank Zihong Chen, Vasya Krylov, and the anonymous referee for comments on a draft of this paper which improved the exposition greatly.

\section{Quantum Hikita conjecture}\label{sec:quantum-hikita}

Our formulation of the 3D mirror symmetry conjecture enhances the quantum Hikita conjecture proposed by \cite{KMP21}. In this section, we review the statement of the quantum Hikita conjecture by \cite{KMP21} for symplectic dual pair of symplectic resolutions; for a more detailed description, we refer the reader to the original paper. We begin by describing symplectically dual pairs, the definition of their quantum $D$-modules and the $D$-modules of twisted traces, and the quantum Hikita conjecture positing the identification of these $D$-modules for dual pairs of symplectic resolutions.

\subsection{Symplectic resolutions}\label{ssec:dymp-res}

The 3D mirror symmetry conjecture or the symplectic duality conjecture describes a duality between pairs of smooth symplectic varieties known as \emph{symplectic resolutions}. 

\subsubsection{Definition}\label{sssec:symp-res-defn}
Throughout our discussion, we work over a field $k$ which may be of positive characteristic. For a discussion of symplectic resolutions in positive characteristic and especially the theory of quantizations in characteristic $p$, we refer to the excellent \cite[Section 1-3]{kaledin-survey}.

\begin{defn}
    A \emph{symplectic resolution} is a morphism $\pi: X \to Y$ of algebraic varieties such that:
    \begin{itemize}
        \item $X$ is smooth with a symplectic structure $\omega \in H^0(X;\Omega^2_X)$;
        \item $Y$ is affine, normal, and Poisson;
        \item $\pi$ is projective, birational, and Poisson.
    \end{itemize}
    We also refer to the smooth $(X, \omega)$ as the symplectic resolution in our discussion.
\end{defn}

Given a fixed $Y$, the choice of a resolution $X \to Y$ may not be unique (they may be related by birational transformations). When we discuss symplectic resolutions, we implicitly fix a choice.

There are two main sources of such varieties: (i) the (generalized) Springer resolution associated to a pair $(G, P)$ of a reductive group $G$ and a parabolic subgroup $P$, and (ii) supersymmetric gauge theory associated to a pair $(G,V)$ of a reductive group $G$ and a complex representation $V \in \mathrm{Rep}(G)$. For (ii), the data $(G,V)$ give rise to constructions of two large classes of symplectic resolutions, (ii-1) the Higgs branches and (ii-2) the Coulomb branches. In particular, (ii) includes hypertoric varieties, resolutions of Kleinian singularities, Nakajima quiver varieties, etc., as examples. We will review these constructions briefly in the subsequent subsection. 

\begin{rem}
Note that we do not make assumptions about the characteristic over which $X \to Y$ is defined; for the smoothness of $X$ in a given choice of geometric data as above, one may need to choose a generic $p \gg 0$.
\end{rem}

\begin{defn}
    A symplectic resolution $(X,\omega)$ is \emph{Hamiltonian} if it admits a Hamiltonian action of a split algebraic torus $T$, that is $T$ acts faithfully by Poisson automorphisms on $\mathcal{O}(X)$. In particular, there is an induced $X^\bullet(T)$-grading on the coordinate ring $\mathcal{O}(X) = \bigoplus_{\lambda \in X^\bullet(T)} \mathcal{O}(X)_\lambda$, where $X^\bullet(T) := \mathrm{Hom}(T, \mathbb{G}_m)$ is the weight lattice. 
    
    We refer to this grading as the \emph{weight grading}.
\end{defn}

\begin{defn}
    A symplectic resolution $(X, \omega)$ is \emph{conical} if there exists a $\mathbb{Z}$-grading on the coordinate ring $\mathcal{O}(X)$ such that 
    \begin{itemize}
        \item The grading $\mathcal{O}(X) = \bigoplus_{n \ge 0} \mathcal{O}(X)^n$ is non-negative, with $\mathcal{O}(X)^0 = k$ and $\mathcal{O}(X)^1 = 0$.
        \item The Poisson bracket has degree $-2$.
    \end{itemize}
    We refer to this $\mathbb{Z}$-grading as the \emph{conical grading}.
\end{defn}
Note that the Poisson bracket is $(-2)$-shifted with respect to the conical grading, and hence $\mathcal{O}(X)^2$ forms a 
Lie algebra with respect to the Poisson bracket.

\begin{defn}[cf. {\cite[Section 2.1]{KMP21}}]
    A symplectic resolution $(X, \omega)$ over $k$ is \emph{conical Hamiltonian} if it is Hamiltonian, conical, and the Hamiltonian $T$-action is compatible with the conical action in the sense that
    \begin{itemize}
        \item $T$ commutes with the $\mathbb{G}_m$-action on $X$ corresponding to the conical grading,
        \item The Lie algebra $\mathfrak{t} = \mathrm{Lie}(T)$ is identified with the Cartan subalgebra of the Poisson algebra $\mathcal{O}(X)^2$ through its action on $\mathcal{O}(X)$.
    \end{itemize}
\end{defn}

For a conical Hamiltonian symplectic resolution and its $X^\bullet(T) \times \mathbb{Z}$-grading on $\mathcal{O}(X)$, we denote the $(\lambda, n)$-graded piece as $\mathcal{O}(X)^n_\lambda$.

\subsubsection{Deformations and quantizations}\label{sssec:symp-res-defm-quant}

Recall the following definition:
\begin{defn}
    A quantization $A$ of a commutative $k$-algebra $R$ is an associative, flat $k[\![\hbar]\!]$-algebra, complete with respect to the $\hbar$-adic filtration, equipped with an isomorphism $A/\hbar \cong R$.  Similarly, a quantization of a scheme $X/k$ is a sheaf $\mathscr{A}$ of associative, flat $k[\![\hbar]\!]$-algebras over $X$, complete with respect to the $\hbar$-adic filtration, equipped with an isomorphism $\mathscr{A}/\hbar \cong \mathcal{O}_X$ for the structure sheaf $\mathcal{O}_X$.
\end{defn}

Over $k = \mathbb{C}$, the general theory \cite{losev-deformation} of conical Hamiltonian symplectic resolutions $(X,\omega)$ guarantees the existence of 
\begin{itemize}
    \item Deformations of $X$: a deformation family parametrized by $H^2(X)$, fitting into a diagram
    \[
    \begin{tikzcd}
        \mathcal{X} \dar \rar & H^2(X) \dar \\
        \mathcal{Y} \rar & H^2(X)/W
    \end{tikzcd}
    \]
    with $\mathcal{X}$ as the universal Poisson deformation of $\mathcal{X}$ and $W$ is the (Namikawa) Weyl group.
    \item Universal quantizations of $Y$: quantizations of $\mathcal{X}$, whose global sections are denoted by $\mathcal{A}$. Denoting the quantization parameter by $\hbar$, this is equipped with a map $\mathcal{A} / k \hbar \to \mathcal{O}(\mathcal{X})$ to the coordinate ring of the universal deformation.
\end{itemize}

For a more detailed discussion of the deformations and quantizations of symplectic resolutions, we refer the reader to \cite[Section 4]{kamnitzer-survey}.

If $k$ is of positive characteristic, we are unaware of such a general theory. However, in many specific examples, both the universal deformation and the universal quantizations can be described explicitly, and may be verified to be split (defined over $\mathbb{Z}$ or some localization thereof). In this paper, we consider examples for which the existence of universal deformations and quantizations over some ring of integers can be verified explicitly. These provide the examples of conical Hamiltonian symplectic resolutions over fields of positive characteristic.

If $X$ is Hamiltonian, then there is an induced $X^\bullet(T)$-grading on $\mathcal{O}(\mathcal{X})$ and $\mathcal{A}$, arising from a fiberwise $T$-action for the deformation family $\mathcal{X} \to H^2(X)$. We again denote the $(\lambda, n) \in X^\bullet(T) \times \mathbb{Z}$-graded pieces as $\mathcal{O}(\mathcal{X})_\lambda^n$ and $\mathcal{A}_\lambda^n$, respectively.

Explicitly, the weight grading takes the form
\begin{equation}
    \mathcal{A} = \bigoplus_{\lambda \in X^\bullet(T)} \mathcal{A}_\lambda, \quad \mathcal{A}_\lambda := \{ a \in \mathcal{A} : [x, a] = \hbar \langle \lambda, \bar{x} \rangle a, \ x \in \mathcal{A}^2_0\};
\end{equation}
note that the commutator action of $x \in \mathcal{A}_0^2$ factors through its image $\bar{x} \in \mathfrak{t}$ under the projection of Lie algebras $\mathcal{A}_0^2 \to \mathcal{O}(X)_0^2 \cong \mathfrak{t}$.

\begin{lem}
    The classical limit map $\mathcal{A} \to \mathcal{O}(\mathcal{X})$ preserves the $X^\bullet(T)$-grading.
\end{lem}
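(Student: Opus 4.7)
The plan is to translate the adjoint-action characterization of $\mathcal{A}_\lambda$ into the Poisson-bracket characterization of $\mathcal{O}(\mathcal{X})_\lambda$ via the classical limit. The underlying principle is the defining compatibility of the quantization: the classical limit map $\mathcal{A} \to \mathcal{A}/\hbar\mathcal{A} \cong \mathcal{O}(\mathcal{X})$ intertwines the normalized commutator with the Poisson bracket, i.e.\ for $a, b \in \mathcal{A}$ one has $\hbar^{-1}[a,b] \bmod \hbar = \{\bar a, \bar b\}$, where $\bar a, \bar b$ denote the classical limits.

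Concretely, I would take $a \in \mathcal{A}_\lambda$ and any $x \in \mathcal{A}^2_0$ with image $\bar x \in \mathcal{O}(\mathcal{X})^2_0$ projecting further to $\bar x \in \mathfrak{t}$ via the identification $\mathcal{O}(X)^2_0 \cong \mathfrak{t}$. By the defining condition of $\mathcal{A}_\lambda$, we have the identity $[x,a] = \hbar \langle \lambda, \bar x\rangle a$ in $\mathcal{A}$, and since $\mathcal{A}$ is flat over $k[\![\hbar]\!]$ we may divide by $\hbar$ to obtain $\hbar^{-1}[x,a] = \langle \lambda, \bar x\rangle a \in \mathcal{A}$. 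Reducing modulo $\hbar$ yields
\[
\{\bar x, \bar a\} = \langle \lambda, \bar x\rangle\, \bar a \quad \text{in } \mathcal{O}(\mathcal{X}),
\]
which is precisely the infinitesimal weight-$\lambda$ condition characterizing $\bar a \in \mathcal{O}(\mathcal{X})_\lambda$, formulated in exact parallel to the $\mathcal{A}$-side formula from the excerpt, using that the Hamiltonian $T$-action on $\mathcal{X}$ is generated by Poisson bracket with $\mathfrak{t} \subset \mathcal{O}(\mathcal{X})^2_0$.

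The one point demanding a bit of care is the passage from the infinitesimal $\mathfrak{t}$-action to the full algebraic $T$-grading: in characteristic $p$, the differentials $d\lambda \in \mathfrak{t}^*$ need not separate distinct weights, so the infinitesimal characterization alone may not recover the $X^\bullet(T)$-grading. The clean way to handle this is to invoke directly that the quantization $\mathcal{A}$ carries a fiberwise $T$-action lifting the one on $\mathcal{O}(\mathcal{X})$ (both arising from the $T$-action on the universal deformation family), so that the classical limit map is $T$-equivariant by construction and therefore preserves $T$-weight decompositions; the infinitesimal calculation above then serves as the consistency check that the formula for $\mathcal{A}_\lambda$ is indeed compatible with that for $\mathcal{O}(\mathcal{X})_\lambda$ under the classical limit. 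I do not expect a substantial obstacle here: the proof is essentially an unpacking of definitions, and the only real content is the functoriality of the commutator-to-Poisson-bracket correspondence.
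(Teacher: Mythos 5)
Your proof is correct and follows essentially the same route as the paper's: take $a \in \mathcal{A}_\lambda$, use the defining relation $[x,a] = \hbar\langle\lambda,\bar{x}\rangle a$, and pass to the classical limit where the normalized commutator becomes the Poisson bracket, yielding $\{\bar{x},\bar{a}\} = \langle\lambda,\bar{x}\rangle\bar{a}$. Your additional remark about characteristic $p$ is well taken and goes beyond the paper's one-line argument: the infinitesimal identity only pins down $\lambda$ modulo $p$, so the paper's concluding step ``in particular the $T$-weight of $\bar{a}$ is indeed $\lambda$'' implicitly relies on the fact (stated earlier in \cref{sssec:symp-res-defm-quant}) that the $X^\bullet(T)$-gradings on both $\mathcal{A}$ and $\mathcal{O}(\mathcal{X})$ arise from a genuine fiberwise $T$-action on the deformation family, exactly as you invoke.
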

\begin{proof}
    Consider $a \in \mathcal{A}_\lambda$. By definition, we have $[x,a] = \hbar \langle \lambda, \bar{x} \rangle a$ for all $x \in \mathcal{A}_0^2$.
    The fact that the commutator on $\mathcal{A}$ descends to $\hbar$ times the Poisson bracket implies that $\{\bar{x}, \bar{a}\} = \langle \lambda, \bar{x} \rangle \bar{a}$, in particular the $T$-weight of $\bar{a} \in \mathcal{O}(\mathcal{X})$ is indeed $\lambda \in X^\bullet(T)$.
\end{proof}

\subsection{Examples of symplectic resolutions}\label{ssec:exmp-symp-res}

\subsubsection{Springer resolutions}\label{sssec:exmp-springer}
Let $G$ be a semisimple, simply-connected split algebraic group defined over $k$, denote by $B$ a Borel subgroup and $T$ a maximal torus; the choice of $B$ gives an identification of $T$ with the abstract Cartan group $H$ of $G$ which we fix. The flag variety $G/B$ is smooth regardless of the characteristic.

In positive characteristic, we choose the characteristic of $k$ to be $p > h$, where $h$ is the maximal Coxeter number of $G$. This in particular ensures that there is a non-degenerate invariant symmetric form on $\mathfrak{g}$.

The cotangent bundle $X =  T^*(G/B)$ is a smooth variety over $k$ and admits an affinization map to the nilpotent cone $X \to \mathcal{N}$. It is called the \emph{Springer resolution} for the pair $(G,B)$. In positive characteristic, under the assumption $p > h$ on the characteristic, we may identify $\mathcal{N}$ as a subvariety of either $\mathfrak{g}$ or $\mathfrak{g}^*$, \cite[3.1.2]{bezrukavnikov-mirkovic-rumynin}.

The Springer resolution $X = T^*(G/B)$ admits the structure of a conical Hamiltonian symplectic resolution, where the action of $T$ on $X$ lifts the left multiplication action of $T$ on $G/B$. The affinization map $X \to \mathcal{N}$ gives an identification of $\mathcal{O}(X)$ with the coordinate ring on the zero fiber of the adjoint quotient map $\mathfrak{g}^* \to \mathfrak{t}^*/W$, which carries the Poincar\'e--Birkhoff--Witt grading so that $x \in \mathfrak{g} \subseteq k[\mathfrak{g}^*]$ carries degree $2$.

The universal deformation $\mathcal{X}$ of $X$ is given by the Grothendieck--Springer resolution $\widetilde{\mathfrak{g}}^*$, whose affinization agrees with $\mathfrak{g}^* \times_{\mathfrak{t}^*/W} \mathfrak{t}^*$. The quantization $\mathcal{A}= U_\hbar \mathfrak{g} \otimes_{Z(U_\hbar\mathfrak{g})} \mathrm{S} ( \mathfrak{t}) [\hbar]$ is the Rees algebra with respect to the PBW filtration, where the Rees parameter is denoted by $\hbar$.

For non-maximal parabolic $P \le G$, one still has a conical Hamiltonian symplectic resolution $T^*(G/P)$ resolving its affinization, the \emph{generalized Springer resolution}. In this paper, we exclusively consider the case $P = B$.

\subsubsection{Gauge theory: Higgs branches}\label{sssec:exmp-higgs}
Let $G$ be a complex reductive connected Lie group and $V$ be a finite-dimensional complex representation. From these data, one can construct two symplectic singularities called the \emph{Higgs branch} and the \emph{Coulomb branch}. We first briefly discuss the construction of the Higgs branch.

Take the cotangent bundle $T^* V \cong V \times V^*$ equipped with the standard symplectic form. Then the $G$-action on $V$ can be lifted to a Hamiltonian action on $T^* V$ with moment map $\Phi: V \times V^* \to \mathfrak{g}^*$. 

\begin{defn}
    The \emph{Higgs branch} is the algebraic Hamiltonian reduction
\begin{equation}
    T^* V /\!\!/\!\!/\!\!/_{0,0} G := \Phi^{-1}(0) /\!\!/_{0} G,
\end{equation}
where we take the usual GIT quotient with trivial stability condition.
\end{defn}

Deformations and resolutions of Higgs branches are both controlled by characters of $G$, as follows. After choosing a character $\chi: G \to \mathbb{G}_m$, we can take the GIT quotient
\begin{equation}
    T^* V /\!\!/\!\!/\!\!/_{0,\chi} G := \Phi^{-1}(0) /\!\!/_{\chi} G,
\end{equation}
with affinization map
\begin{equation}
        \Phi^{-1}(0) /\!\!/_{\chi} G \to \Phi^{-1}(0) /\!\!/_{0} G,
\end{equation}
which in nice cases yield a (partial) resolution of the Higgs branch.

Deformations of Higgs branches arise from allowing the moment map parameter to vary, in particular for $(\mathfrak{g}^{ab})^* := \mathrm{Hom}(\mathfrak{g}, k) \subseteq \mathfrak{g}^*$,
\begin{equation}
    \Phi^{-1}((\mathfrak{g}^{ab})^*)/\!\!/_\chi G \to \Phi^{-1}((\mathfrak{g}^{ab})^*)/\!\!/_0 G
\end{equation}
is a model for the universal deformation and its affinization.

\subsubsection{Gauge theory: Coulomb branches}\label{sssec:exmp-coulomb}

Continuing our notations introduced above, let $G$ be a complex reductive connected Lie group and suppose $V$ is a finite-dimensional complex representation. Braverman--Finkelberg--Nakajima introduced the construction of Coulomb branches associated to the data $(G,V)$. The construction may be regarded as an analog of the (generalized) Springer resolutions in the affine, parabolic setting.

\begin{defn}
    The \emph{BFN Springer resolution} is defined as $\mathcal{T}_{G, V} := G\brK \times^{G \brO} V\brO$, the quotient of $G\brK \times V\brO$ by the diagonal relation $[g(z) g_1(z), g_1(z)^{-1} v(z)] \sim [g(z), v(z)]$ for $g_1(z) \in G\brO$. The space $\mathcal{T}_{G,V}$ is equipped with two natural maps: the projection map 
    $$ \pi: \mathcal{T}_{G, V} \to \mathrm{Gr}_{G} := G\brK/G\brO, $$ 
    and the action map
    \begin{equation*}
        \begin{aligned}
            \mu : \mathcal{T}_{G, V} &\to V\brK \\
            [g(z), v(z)] &\mapsto g(z) \cdot v(z).
        \end{aligned}
    \end{equation*}
\end{defn}

\begin{rem}
    This construction is analogous to the usual Springer resolution $T^*(G/B) \cong G \times^B \mathfrak{n}$ which maps to $G/B$ (as the cotangent bundle) and $\mathcal{N}$ (by the Springer resolution, induced by the action of $G$ on $\mathfrak{n}$).
\end{rem}

There is a modular description of both $\mathrm{Gr}_G$ and $\mathcal{T}_{G,V}$. The affine Grassmannian $\mathrm{Gr}_G$ can be thought of as the space of pairs $\{ (\mathcal{P}, \phi) \}$ where $\mathcal{P}$ is a principal $G$-bundle over the formal disc and $\phi$ is a trivialization over the punctured disc, considered up to isomorphism of such pairs. The identification is given by describing the action of $G\brK$ on such space by multiplying the trivialization by an element of $G\brK$; this is a transitive action with stabilizer $G\brO$.

Similarly, the BFN Springer resolution can be described as isomorphism classes of triples $\{ (\mathcal{P}, \phi, s) \}$ where $s$ is a section of the associated $V$-bundle over the formal disc. The action map $\mathcal{T}_{G,V} \to V\brK$ is given by using the trivialization over the punctured disc to consider $s$ as an element of $V\brK$.

\begin{defn}
    The \emph{BFN Steinberg variety} is the fiber product
    $$\mathcal{T} \times_{V\brK} \mathcal{T}$$
    induced by the action map $\mathcal{T} \to V\brK$.
\end{defn}
\begin{rem}
    This is an analog of the usual Steinberg variety $T^*G/B \times_{\mathcal{N}} T^*G/B$.
\end{rem}

\begin{defn}
    The \emph{moduli of triples} is the subspace defined as the preimage 
    $$\mathcal{R}_{G, V} := \mu^{-1}(V\brO) \subseteq \mathcal{T}_{G, V}, $$ 
    where $\mu : \mathcal{T}_{G, V} \to V\brK$ is the action map.
\end{defn}

By definition, $\mathcal{R}_{G, V}$ consists of elements in $\mathcal{T}_{G, V}$ such that the section $s$ is mapped to a formal power series (as opposed to Laurent series) under the trivialization $\phi$. The advantage of considering $\mathcal{R}_{G, V}$ is that the BFN Steinberg variety equipped with the $G\brK$-action can be identified as stacks with $\mathcal{R}_{G,V}$ equipped with the $G\brO$-action. The fact that $G\brO$ is pro-algebraic (in particular, much smaller than $G\brK$) is leveraged to define the commutative convolution product structure on equivariant (Borel--Moore) homology of $\mathcal{R}_{G,V}$ through the \emph{convolution diagram}, see \cite[3(i, ii, iii)]{BFN2}.

\begin{defn}[{\cite[Definition 3.13]{BFN2}}]
    The \emph{BFN Coulomb branch} is 
    $$\mathcal{M}_C(G, V):= \mathrm{Spec} \ H_*^{G\brO}(\mathcal{R}_{G, V}; k), $$ 
    where $H_*^{G\brO}(\mathcal{R}_{G, V}; k)$ is endowed with the convolution product structure.
\end{defn}

The group $\mathbb{G}_m = \mathbb{C}^*$ acts on $\mathcal{R}_{G, V}$ as automorphisms of the (punctured) formal disc, and the equivariant (Borel--Moore) homology 
\begin{equation*}
    \mathcal{A}_{\hbar}(G,V) := H_*^{G\brO \rtimes \mathbb{G}_m}(\mathcal{R}_{G, V}; k)
\end{equation*}
with the convolution product structure defines a noncommutative deformation of the Coulomb branch algebra $H_*^{G\brO}(\mathcal{R}_{G, V}; k)$, inducing a Poisson structure on $\mathcal{M}_C(G, V)$. 

Deformations and resolutions of Coulomb branches may be obtained by introducing an additional torus action $T_F \to \mathrm{GL}(V)$ commuting with the $G$-action, or more generally an extension of $T_F$ by $G$ which fits into a short exact sequence
\begin{equation*}
    1 \longrightarrow G \longrightarrow \tilde{G} \longrightarrow T_F \longrightarrow 1,
\end{equation*}
see \cite[3(viii, ix)]{BFN2}. The point is that if $V$ is a $\tilde{G}$ representation, $\mathcal{M}_C(G, V)$ is a Hamiltonian reduction of $\mathcal{M}_C(\tilde{G}, V)$ under the dual group $T_F^{\vee}$-action, and $\mathcal{A}_{\hbar}(G,V)$ is obtained from $\mathcal{A}_{\hbar}(\tilde{G},V)$ by quantum Hamiltonian reduction. The deformation space is given by $\mathrm{Lie}(T_F)$. On the other hand, upon choosing a character of $T_F^{\vee}$, one can define a (partial) resolution of $\mathcal{M}_C(G, V)$, which has been conjectured to give rise to a symplectic resolution in \cite{BFN2} and verified in \cite{bellamy}.

\subsection{Symplectic duality}\label{ssec:symp-duality}
Symplectic duality posits that conical Hamiltonian symplectic resolutions arise in pairs, where certain structures are interchanged for dual pairs. Depending on what the structure of concern is, there are distinct formulations of the symplectic duality phenomena, as surveyed in \cite[Section 5]{kamnitzer-survey}. In this subsection, we explain the formulation of the \emph{quantum Hikita conjecture} which identifies the quantum $D$-module and the $D$-module of twisted traces for dual pairs.

\subsubsection{Quantization exact sequence and cohomology exact sequence}\label{sssec:eq-kah-ses}

Consider a conical Hamiltonian symplectic resolution $X$ with a $T \times \mathbb{G}_m$-action. We may construct two short exact sequences of vector spaces involving the canonical quantization of $X$ and the $T \times \mathbb{G}_m$-equivariant cohomology of $X$
respectively, which are interchanged for symplectically dual pairs.

\begin{defn}
    The \emph{quantization exact sequence} is the short exact sequence
    \begin{center}
        \begin{tikzcd}
    0 \rar & H_2(X; k)\oplus k\hbar \rar & \mathcal{A}_0^2 \rar & \mathfrak{t} \rar &0
\end{tikzcd}
    \end{center}
    of $k$-linear Lie algebras, where $\mathcal{A}^2$ (hence $\mathcal{A}_0^2$) has Lie bracket given by the commutator $\hbar^{-1}[-,-]$.   
\end{defn}
\begin{rem}
The commutator $\hbar^{-1}[-,-]$ is well-defined without inverting $\hbar$, since by the definition of a quantization, we have $[\mathcal{A}, \mathcal{A}] \subseteq \hbar \mathcal{A}$ and $\hbar$ acts on $\mathcal{A}$ in a torsion-free way (the sheaf of quantizations, for which $\mathcal{A}$ is the global sections, is a flat deformation of $\mathcal{O}_{\mathcal{X}}$)).
\end{rem}

\begin{defn}
    The \emph{cohomology exact sequence} is the short exact sequence
    \begin{center}
        \begin{tikzcd}
  0 \rar & H^2_{T\times \mathbb{G}_m} (\mathrm{pt};k)\rar & H^2_{T \times \mathbb{G}_m ; k} (X) \rar &  H^2(X;k)\rar & 0  
    \end{tikzcd}
    \end{center}
    of $k$-vector spaces, where $H^2_{T \times \mathbb{G}_m}(pt ; k) \cong k \lambda_1 \oplus \cdots \oplus k \lambda_{\mathrm{rank} T} \oplus k \hbar$ is additively generated by the equivariant parameters.
\end{defn}

For symplectically dual pairs $(X, X^!)$, we expect the quantization exact sequence and the cohomology exact sequence to be interchanged, with isomorphisms even over integers $H^2(X^!;\mathbb{Z}) \cong \mathfrak{t}_{\mathbb{Z}}$ and $H^2_{T^!}(\mathrm{pt};\mathbb{Z}) \cong (\mathfrak{t}^!)^*_{\mathbb{Z}} \cong H_2(X;\mathbb{Z})$. Both of the exact sequences are (non-canonically) split: for the former, see \cite[Remark 2.2]{KMP21}; for the latter, it follows from equivariant formality.

\subsubsection{Equivariant and K\"ahler roots}\label{sssec:eq-kah-roots}
Consider a conical Hamiltonian symplectic resolution $X$ with a $T \times \mathbb{G}_m$-action. We may construct finite collections of weights in $X^\bullet(T)$ and curve classes in $H_2(X; \mathbb{Z})$ dubbed the \emph{equivariant roots} and the \emph{K\"ahler roots} respectively, which are interchanged for symplectically dual pairs.

We first define the equivariant roots. Denote the canonical quantization as $\mathcal{A}$. We denote $\mathcal{A}^+ \subseteq \mathcal{A}$ to be the two-sided ideal of elements with positive conical grading.

\begin{defn}
    The \emph{equivariant roots} of $X$ are the weights of $\mathcal{A}^+ / (\mathcal{A}^+ \cdot \mathcal{A}^+)$ as a $T$-representation. The set of equivariant roots of $X$ are denoted $\Sigma_{eq}:= \Sigma_{eq}(X)$.
\end{defn}

A choice of a cocharacter $\sigma : \mathbb{G}_m \to T$ determines a distinguished subset of $\Sigma_{eq}$:
\begin{defn}
    The \emph{positive equivariant roots} of $X$ with respect to a cocharacter $\sigma: \mathbb{G}_m \to T$ are
    \begin{equation}
        \Sigma_{eq,+} = \Sigma_{\sigma, +} := \{ \lambda \in \Sigma_{eq} : \langle \sigma, \lambda \rangle > 0 \}.
    \end{equation}
\end{defn}
The positive equivariant roots generate an additive semigroup denoted by $\mathbb{Z}_{\ge 0} \Sigma_{eq,+}$.

We now define the K\"ahler roots. The definition of K\"ahler roots depends on a (hypothetical) structure theorem for the equivariant quantum cohomology of symplectic resolutions conjectured by Okounkov. 

Denote by $\Lambda' := k [z^\alpha : \alpha \in H_2^{\mathrm{eff}}(X;\mathbb{Z})]$ to be the \emph{polynomial Novikov ring}, generated over $k$ by symbols $z^\alpha$ where $\alpha$ indexes the additive semigroup of effective curve classes in $H_2(X;\mathbb{Z})$. Let $\hat{\Lambda}$ be the completion of $\Lambda'$ at the maximal ideal generated by $z^\alpha$. 

Recall that the \emph{(equivariant) quantum cohomology} of $X$ is the $k$-vector space 
\begin{equation}
    QH^*(X):=H^*_{T \times \mathbb{G}_m}(X) \otimes \hat{\Lambda}
\end{equation}
endowed with a graded commutative product $\star$ which is constructed using genus $0$, $3$-pointed equivariant Gromov--Witten invariants. Denote by $\hbar$ the generator of $H^2_{\mathbb{G}_m}(pt)$, which represents the conical weight of the holomorphic symplectic form.

\begin{conj}\label{conj:structure-of-QH}(\cite[\S 2.3.4]{okounkov15})
    For a conical Hamiltonian symplectic resolution $X$ with $T\times\mathbb{G}_m$-action, there exists a finite set $\Sigma_{Kah, +} \subseteq H_2(X;\mathbb{Z})$ spanning the semigroup $H_2^{\mathrm{eff}}(X;\mathbb{Z})$ of effective curve classes, such that for each $ \alpha \in \Sigma_{Kah, +}$ there exists an operator $S_\alpha \in \mathrm{End}(H^*_{T \times \mathbb{G}_m}(X))$ such that the quantum product by any $u \in H^2_{T \times \mathbb{G}_m}(X)$ acts on $QH^*_{T \times \mathbb{G}_m}(X)$ as 
    \begin{equation}
        u  \star \ \cdot = u \smile \cdot + \hbar \sum_{\alpha \in \Sigma_{Kah, +}} \langle \alpha, \bar{u} \rangle \frac{z^{\alpha}}{1-z^\alpha} S_\alpha(\cdot).
    \end{equation}
    Here the action of $S_\alpha$ is extended $\hat{\Lambda}$-linearly to $QH^*_{T \times \mathbb{G}_m}(X)$ and $\bar{u}$ is the image of $u$ under the projection $H^2_{T \times \mathbb{G}_m}(X) \to H^2(X)$.
\end{conj}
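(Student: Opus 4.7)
The plan is to reduce the structural claim to properties of two-pointed genus zero equivariant Gromov--Witten invariants, using the divisor equation to isolate the dependence on $u \in H^2_{T\times\mathbb{G}_m}(X)$, and then to invoke a multicover/rigidity principle that produces the rational functions $z^\alpha/(1-z^\alpha)$. Concretely, I would first write the equivariant quantum product on a degree-two class as
\begin{equation*}
u \star v \;=\; u \smile v \;+\; \sum_{d \in H_2^{\mathrm{eff}}(X;\mathbb{Z}) \setminus 0} z^d \, T_d(u,v),
\end{equation*}
where $T_d(u,v) \in H^*_{T \times \mathbb{G}_m}(X)$ is assembled from three-point invariants with insertions $u,v,\mathrm{pt}^\vee$ in class $d$. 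The divisor equation gives $T_d(u,v) = \langle d, \bar u \rangle \, \tilde T_d(v)$, so $u\star$ factors through a curve-class-weighted sum of fixed two-point operators $\tilde T_d$, and the pairing with $\bar u$ appears linearly as required.

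The next step is to implement a multicover decomposition. Effective curves come with a natural notion of primitivity in the cone $H_2^{\mathrm{eff}}(X;\mathbb{Z})$; I would look for a finite generating collection $\Sigma_{Kah,+}$ of primitive classes and prove that, for each $\alpha \in \Sigma_{Kah,+}$, the total contribution from classes $n\alpha$ organizes as
\begin{equation*}
\sum_{n \ge 1} z^{n\alpha}\,\tilde T_{n\alpha} \;=\; \hbar\,\frac{z^\alpha}{1-z^\alpha}\, S_\alpha,
\end{equation*}
for a single endomorphism $S_\alpha$ independent of $n$. The geometric series arises from the fact that multicovers of a rational curve with two marked points have no deformations transverse to the map, so one expects the invariants to stabilize. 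The factor $\hbar$ records the conical weight of the symplectic form: by Mukai flop/vanishing arguments for compact holomorphic symplectic targets, the non-divisor three-point function is a multiple of $\hbar$, which can be established by a dimension/virtual-class calculation using that the reduced virtual fundamental class vanishes for symplectic resolutions.

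For finiteness of $\Sigma_{Kah,+}$, I would appeal to the conical $\mathbb{G}_m$-action to show that only classes $\alpha$ with bounded degree against the polarization coming from the affinization contribute; more structurally, one can use Okounkov's quasimap approach via stable envelopes, which identifies $S_\alpha$ with the residues of the quasimap/capping operator $\mathbf{J}(z)$ at roots of unity. In this framework, stable envelopes give a basis in which $S_\alpha$ is described by root subalgebras of the Maulik--Okounkov Yangian, and finiteness of $\Sigma_{Kah,+}$ becomes finiteness of the positive roots. The hardest step, which is why this remains conjectural in general, is the multicover/rigidity claim producing the exact form $z^\alpha/(1-z^\alpha)$: verifying it requires either a direct localization computation on the quasimap moduli (available for Nakajima quiver varieties, including hypertoric) or a geometric Satake-type identification (for Springer resolutions), and precisely these cases are the ones the present paper ultimately relies on.
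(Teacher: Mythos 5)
This statement is \cref{conj:structure-of-QH}, which the paper does \emph{not} prove: it is Okounkov's structural conjecture on the quantum multiplication of divisors, quoted from \cite[\S 2.3.4]{okounkov15} and assumed as a hypothesis throughout (it is what makes the notion of K\"ahler roots and the specialized quantum $D$-module well defined). So there is no proof in the paper to compare against, and your text should be read as a strategy sketch rather than a proof. As a sketch it captures the expected mechanism correctly in outline: the divisor equation does reduce $u\star$ to operators $\tilde T_d$ indexed by curve classes with the pairing $\langle d,\bar u\rangle$ appearing linearly, and the overall factor of $\hbar$ does come from the vanishing of the reduced virtual class for holomorphic symplectic targets (as in \cite{BMO11}).

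However, two steps you treat as consequences of general principles are in fact the entire conjectural content. First, regrouping $\sum_{d} z^d\,\tilde T_d$ as $\sum_{\alpha}\sum_{n\ge 1} z^{n\alpha}\,\tilde T_{n\alpha}$ presupposes that $\tilde T_d$ \emph{vanishes} unless $d$ is a positive multiple of one of finitely many classes $\alpha$; this does not follow from choosing primitive generators of the effective cone (a general effective class is a nonnegative combination of generators, not a multiple of a single one), and it is a strong vanishing statement about two-point invariants that must be proved case by case. Second, the identity $\sum_{n\ge1} z^{n\alpha}\tilde T_{n\alpha} = \hbar\,\frac{z^\alpha}{1-z^\alpha}S_\alpha$ requires $\tilde T_{n\alpha}=\hbar S_\alpha$ to be literally independent of $n$; the heuristic that ``multicovers have no transverse deformations'' is not an argument, and in the known cases (Springer resolutions, hypertoric/quiver varieties) this is established by explicit virtual localization on the multiple-cover contributions or by the Yangian/stable-envelope formalism. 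You correctly flag at the end that these are the hard points, which is the right assessment: your proposal is a faithful description of why the formula is expected, not a proof, and the paper itself only ever uses the statement as an assumption (verified in the literature for the two example classes it treats).
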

In particular, the conjecture implies that the quantum cohomology can be defined over $H^*_{T \times \mathbb{G}_m}(X) \otimes \Lambda$, where $\Lambda = \Lambda' [(1-z^\alpha)^{-1}: \alpha \in \Sigma_{Kah, +}] \subseteq \hat{\Lambda}$ is the localization which is much more restricted than a general power series in $z^\alpha$.

\begin{defn}
    Assuming \cref{conj:structure-of-QH}, the corresponding elements of the finite set $\Sigma_{Kah, +}$ are the \emph{positive K\"ahler roots}, and an element of $\Sigma_{Kah, +} \cup -\Sigma_{Kah, +} \subseteq H_2(X;\mathbb{Z})$ is called a \emph{K\"ahler root}.
\end{defn}

\begin{rem}
    Note that the notion of a positive K\"ahler root crucially depends on the submonoid $H_2^{\mathrm{eff}}(X;\mathbb{Z}) \subseteq H_2(X;\mathbb{Z})$ of effective curve classes. For instance, given two birational symplectic resolutions $X_1 \to Y$, $X_2 \to Y$ for a fixed affine $Y$, these monoids may differ as the K\"ahler forms on $X_1, X_2$ may differ. In other words, $\Sigma_{Kah, +}$ implicitly depends on the choice of the resolution $X \to Y$, which we fix from the beginning as in our definition of symplectic resolutions.
\end{rem}

For symplectically dual pairs $(X, X^!)$, symplectic duality predicts that the equivariant and the K\"ahler roots are interchanged.

\subsubsection{$D$-module of twisted traces and the quantum $D$-module}\label{sssec:eq-kah-dmodules}

The quantum Hikita conjecture identifies two $D$-modules associated to a conical Hamiltonian symplectic resolution for a dual pair. We first describe the relevant rings of differential operators for both constructions, over which we will define the respective $D$-modules.

Fix a conical Hamiltonian symplectic resolution $(X, \omega)$ with its $T \times \mathbb{G}_m$-action, and choose a cocharacter $\sigma : \mathbb{G}_m \to T$. Denote the corresponding positive equivariant roots by $\Sigma_{eq,+}$. Also, assuming Conjecture \ref{conj:structure-of-QH}, write the set of positive K\"ahler roots as $\Sigma_{Kah, +}$.

\begin{defn}\label{defn:Dring-eq}
    Define the \emph{equivariant ring of differential operators} to be the graded $k[\hbar]$-algebra
    \begin{equation}
        R_{eq} : =   k [z^\mu] \otimes \mathrm{Sym} \ \mathcal{A}^2_0,
    \end{equation}
     where $z^\mu$ are formal variables of degree $0$ indexed by $\mu \in \mathbb{Z}_{\ge 0} \Sigma_{eq,+}$, subject to the relation
     \begin{equation}
      a z^\mu = z^\mu (a + \hbar \langle \mu, \bar{a} \rangle ),
     \end{equation}
     where $\bar{a} \in \mathfrak{t}$ denotes the image of $a \in \mathcal{A}_0^2$ under the projection map in the quantization exact sequence.
\end{defn}

Here, the $k[\hbar]$-module structure on $R_{eq}$ comes from the quantization exact sequence. Although $R_{eq}$ is not commutative, one can show that there exists a well-defined localization
$R_{eq, reg} := R_{eq} [(1-z^\mu)^{-1}]$ obtained by inverting $(1-z^\mu)$ for $\mu \in \Sigma_{eq,+}$, see \cite[Lemma 3.4]{KMP21}.

Now we define the $D$-module of twisted traces from the universal quantization $\mathcal{A}$ of $X$.

\begin{defn}\label{defn:Dmod-eq}
    The \emph{$D$-module of twisted traces} for the $X^\bullet(T) \times \mathbb{Z}$-graded algebra $\mathcal{A}$ is the graded left $R_{eq}$-module
    \begin{equation}
        M_{eq} = \mathscr{B}(\mathcal{A}) :=  k[z^\mu : \mu \in \mathbb{Z}_{\ge 0} \Sigma_{eq,+}] \otimes \mathcal{A}_0 / \{ 1 \otimes ab - z^{\lambda} \otimes ba: a \in \mathcal{A}_\lambda, b \in \mathcal{A}_{-\lambda} \},
    \end{equation}
    where the action of $z^\mu \otimes a \in  k[z^\mu] \otimes \mathcal{A}_0^2 \subseteq R_{eq}$ is given by
    \begin{equation}
        (z^\mu \otimes a) \cdot (z^\lambda \otimes b) = z^{\lambda + \mu} \otimes (a + \hbar \langle \lambda, \bar{a} \rangle )b  .
    \end{equation}
\end{defn}
The fact that the $D$-module of twisted traces is well-defined is verified in \cite[Proposition 3.5]{KMP21}. By inverting $(1-z^\mu)$ for $\mu \in \Sigma_{eq,+}$, we obtain a $R_{eq, reg}$-module $M_{eq, reg} := R_{eq, reg} \otimes_{R_{eq}} M_{eq}$.

The module $\mathscr{B}(\mathcal{A})$ is considered as a \emph{quantum} (i.e. $z$-parameter) deformation of the $B$-algebra $B(\mathcal{A})$. It does not carry a natural algebra structure, unless $\mathcal{A}$ itself is commutative.

\begin{rem}
In \cite{KMP21}, the object is referred to as the $D$-module of \emph{graded traces}, and our terminology follows \cite{etingof-stryker}, considering the $X^\bullet(T)$-grading on $\mathcal{A}$ as inducing the ``twisting.''
\end{rem}

The counterpart of the ring of differential operators on the quantum cohomology side is the following.
\begin{defn}\label{defn:Dring-quantum}
    The graded $k[\hbar]$-algebra
    \begin{equation}
        R_{Kah} := k [ z^\alpha] \otimes \mathrm{Sym} H^2_{T \times \mathbb{G}_m} (X)
    \end{equation}
is the \emph{K\"ahler ring of differential operators}, where $z^\alpha$ is the formal variable of degree $0$ indexed by $\alpha \in \mathbb{Z}_{\ge 0} \Sigma_{Kah, +}$, subject to the relation
\begin{equation}
x z^\alpha = z^\alpha (x + \hbar \langle \alpha, \bar{x} \rangle ),
\end{equation}
where $\bar{x} \in H^2(X)$ denotes the image of $x \in H^2_{T \times \mathbb{G}_m}(X)$ under the projection map in the cohomology exact sequence.
\end{defn}
Similar to the definition of $R_{eq, reg}$, by inverting $(1-z^\alpha)$ for $\alpha \in \Sigma_{Kah, +}$, we obtain $R_{Kah, reg}$.

The quantum $D$-module is defined using the equivariant quantum cohomology of $X$, whose definition in the present form depends on assuming Conjecture \ref{conj:structure-of-QH}.

\begin{defn}\label{defn:Dmod-quantum}
    The \emph{(specialized) quantum $D$-module} for the conical Hamiltonian symplectic resolution $X$ is the graded left $R_{Kah,reg}$-module
    \begin{equation}
        M_{Kah, reg} :=  \Lambda \otimes H^*_{T \times \mathbb{G}_m}(X) \cong k[z^\alpha,(1-z^\alpha)^{-1}] \otimes H^*_{T \times \mathbb{G}_m}(X),
    \end{equation}
    where the actions of $z^\alpha \otimes 1, 1 \otimes x \in k[z^\alpha ] \otimes H^2_{T \times \mathbb{G}_m}(X) \subseteq R_{Kah}$ are given by
    \begin{equation}
        (z^\alpha \otimes 1) \cdot (z^\beta \otimes y) =  z^{\alpha + \beta} \otimes y , \quad (1 \otimes x) \cdot (z^\beta \otimes y) =  (z^\beta \otimes 1) (x \star y  + 1 \otimes \hbar \langle \beta, \bar{x} \rangle y)
    \end{equation}
    Here $\star$ denotes the equivariant quantum product. The action on $(1-z^\alpha)^{-1} \otimes 1 $ is determined as one would define formally for $(1-z)^{-1} = 1 + z + z^2 + \cdots$.
\end{defn}

Note that the quantum $D$-module only exists as a $R_{Kah, reg}$-module, as the quantum product $\star$ involves denominators of the form $(1-z^{\alpha})^{-1}$ for $\alpha \in \Sigma_{Kah, +}$, see \cref{conj:structure-of-QH}. The quantum $D$-module is closely related to the object known as the \emph{quantum connection}, where the action of $x$ is interpreted as the ``covariant derivative'' $\hbar \partial_{\bar{x}} + x \star$ where $\partial_{\bar{x}}$ acts on the variable $z^\beta$ by $\partial_{\bar{x}} z^{\beta} = \langle \beta, \bar{x} \rangle z^\beta$.

\begin{rem}{cf. {\cite[Remark 4.3]{KMP21}}}\label{rem:hbar-vs-t}
    This object is denoted the \emph{specialized} quantum $D$-module, as it identifies the two \emph{a priori} unrelated equivariant parameters in the (usual) construction of the quantum $D$-module. Namely, we identify the \emph{loop parameter} relevant for the action of $R_{Kah}$ on $M_{Kah, reg}$ with the $\mathbb{G}_m(\mathbb{C})$-equivariant parameter from the conical action on $X$, and denote both by $\hbar$. In terms of the enumerative geometry of maps $u: \mathbb{CP}^1 \to X$, the former loop parameter arises from the symmetry of the source curve, while the latter $\mathbb{C}^\times$-equivariant parameter arises from the symmetry of the target symplectic variety. This specialization drastically simplifies the quantum $D$-module, but we do not know a formulation of 3D mirror symmetry which gives independent interpretations of the two parameters.
\end{rem}

For symplectically dual pairs $(X, X^!)$, we expect that the equivariant and K\"ahler ring of differential operators are interchanged, identifying the $D$-module of twisted traces and the (specialized) quantum $D$-module. We spell this conjectural correspondence more precisely in the next subsection.

\subsection{Quantum Hikita conjecture}\label{ssec:quantum-hikita}

Here, we survey the formulation of symplectic duality via the quantum Hikita conjecture, following \cite[Section 5]{KMP21}. In this subsection, we assume that $k = \mathbb{C}$ and that the symplectic resolutions $X \to Y$ and the torus acting on $X$ are defined over $\mathbb{C}$. Assume that $X$ and $X^!$ are symplectically dual pairs of conical Hamiltonian symplectic resolutions, equipped with actions of algebraic tori $T$ and $T^!$. We denote the equivariant/Kahler roots, ring of differential operators, and the $D$-modules constructed for $X^!$ with the superscript $!$. 

\begin{rem}\label{rem:s-dual-is-empirical}
Note that the notion of symplectically dual pairs is empirical in nature, and we do not have a general theory for producing a dual pair; nevertheless, many cases are known (see e.g. \cite[Section 5]{kamnitzer-survey} and references therein), with the largest class given by Higgs/Coulomb branches of a gauge theory $(G, V)$.
\end{rem}

The Hikita conjecture, originally formulated as \cite[Conjecture 1.3]{hikita}, states that the cohomology ring of $X^!$ is isomorphic to the coordinate ring of the (scheme-theoretic) $T$-fixed locus of $Y$. 

\begin{conj}[Hikita conjecture]\label{conj:hikita} Let $X, X^!$ be symplectically dual and denote by $Y$ the affinization of $X$. Then there exists an isomorphism of graded algebras
\begin{equation}
    \mathcal{O}(Y^T) \cong H^*(X^!; k),
\end{equation}
where $Y^T$ denotes the scheme-theoretic fixed locus of $Y$ under the $T$-action, with grading induced from the scaling $\mathbb{G}_m$-action.
\end{conj}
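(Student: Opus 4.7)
The plan is to attack the conjecture example by example, since the notion of symplectic duality is empirical (\cref{rem:s-dual-is-empirical}) and one does not have a universal construction of $X^!$ from $X$. The general strategy is to give explicit presentations of both $\mathcal{O}(Y^T)$ and $H^*(X^!;k)$ by generators and relations, and then exhibit a matching that respects gradings.

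First, for a given dual pair $(X,X^!)$ with Hamiltonian torus $T$ acting on $X$, I would compute the scheme-theoretic $T$-fixed locus $Y^T$. Since $Y = \mathrm{Spec}\,\mathcal{O}(X)$ is a Poisson cone with an explicit affine description in the examples of interest, $Y^T$ is cut out by the ideal in $\mathcal{O}(Y)$ generated by all $T$-weight spaces $\mathcal{O}(Y)_\lambda$ with $\lambda \neq 0$; this leaves $\mathcal{O}(Y^T) = \mathcal{O}(Y)_0 / \langle \mathcal{O}(Y)_\lambda \cdot \mathcal{O}(Y)_{-\lambda}\rangle_{\lambda \neq 0}$. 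In parallel, I would compute $H^*(X^!;k)$ via a Kirwan-surjectivity or Borel-type presentation coming from the geometric description of $X^!$.

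For the core test cases, one then matches presentations directly. For the Springer resolution $X = T^*(G/B)$ with dual $X^! = T^*(G^!/B^!)$, the affinization $Y = \mathcal{N}$ fits into the Cartesian diagram $\mathcal{N} = \mathfrak{g}^* \times_{\mathfrak{t}^*/W} \{0\}$ of \cref{sssec:exmp-springer}, so $Y^T = \mathfrak{t}^* \times_{\mathfrak{t}^*/W} \{0\} = \mathrm{Spec}(k[\mathfrak{t}^*]/k[\mathfrak{t}^*]^W_+)$ is the coinvariant algebra; under the Langlands identification $W \cong W^!$ and $\mathfrak{t}^* \cong \mathfrak{t}^!$, this matches $H^*(G^!/B^!;k)$ by Borel's theorem, with gradings aligned once one notes that the PBW conical grading puts $\mathfrak{t}^*$ in degree $2$. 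For hypertoric varieties, both sides admit Stanley--Reisner type presentations indexed by an oriented matroid and its Gale dual; the isomorphism then reduces to the combinatorial fact that Gale duality exchanges the two presentations.

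The main obstacle, even beyond the empirical nature of duality, is the absence of any functorial construction yielding $\mathcal{O}(Y^T) \cong H^*(X^!;k)$: one really is exhibiting isomorphisms of two rings that are computed by utterly different means. A more structural approach would be to factor both sides through an intermediate object; a promising route is the Braverman--Finkelberg--Nakajima framework, where for $(X,X^!)$ arising as Higgs/Coulomb branches for $(G,V)$, one hopes that $\mathcal{O}(Y^T)$ can be extracted from a localization of $H^{G\brO}_*(\mathcal{R}_{G,V})$ at a fixed-point stratum of $\mathrm{Gr}_G$, which by equivariant cohomology is itself expressed in terms of $H^*(X^!;k)$. The hard part will be to upgrade such localization-theoretic matches to an isomorphism of \emph{graded rings}, since the algebra structures on the two sides come from very different origins (Poisson vanishing versus cup product).
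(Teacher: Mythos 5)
The statement you are trying to prove is stated in the paper as a \emph{conjecture}: the paper gives no proof of \cref{conj:hikita}, and none is known in general. Its only contribution here is the reformulation $\mathcal{O}(Y^T)\cong B(\mathcal{O}(X))$ via the $B$-algebra of \cref{defn:B-algebra}, which you rederive correctly (your ideal $\sum_{\lambda\neq 0}\mathcal{O}(Y)_\lambda\mathcal{O}(Y)_{-\lambda}$ is the weight-zero part of the ideal cutting out the fixed locus, and agrees with the paper's sum over $\langle\sigma,\lambda\rangle>0$ by commutativity). So your proposal should be read not as a proof of the conjecture but as a sketch of its verification in examples, which is exactly the state of the art (cf.\ \cref{rem:s-dual-is-empirical} and the references in Section \ref{sec:intro}).

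Within that scope, your two test cases are essentially the known arguments. For the Springer pair, $\mathcal{O}(\mathcal{N}^T)$ is the coinvariant algebra $k[\mathfrak{t}^*]/\langle k[\mathfrak{t}^*]^W_+\rangle$ (Chevalley restriction identifies the image of $k[\mathfrak{g}^*]^G_+$ with $k[\mathfrak{t}^*]^W_+$), and Borel's theorem identifies this with $H^*(G^!/B^!;k)$; but note that over a field of characteristic $p$ the Borel presentation requires $p$ to avoid the torsion primes of $G^!$, consistent with the paper's standing assumption $p>h$ in \cref{sssec:exmp-springer}. For hypertoric varieties the matching of the fixed-point scheme of the affine hypertoric variety with Konno's presentation of $H^*(X^!;k)$ under Gale duality is precisely Hikita's original verification, and again one must track the primes for which the combinatorial presentations hold integrally. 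Your closing remark that a structural proof would require factoring both sides through a common object (e.g.\ the BFN convolution algebra for Higgs/Coulomb pairs) is well taken and is in the spirit of the approach via $H^*_{G\brO}(\mathcal{R}_{G,V})$ mentioned in the paper's discussion of forthcoming work; but as written this part is a research program, not an argument, and the passage from localization-theoretic identifications to an isomorphism of graded \emph{rings} is exactly the open point.
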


To better relate the Hikita conjecture with its equivariant and quantum generalizations, we take a different perspective. Namely, the coordinate ring $\mathcal{O}(Y^T)$ can be described in terms of the $B$-algebra construction.

\begin{defn}\label{defn:B-algebra}
    Given a $X^\bullet(T)$-graded associative algebra $A$ and a choice of a cocharacter $\sigma \in X_\bullet(T)$, the \emph{$B$-algebra} of $A$ is
    \begin{equation}
        B(A) = A_0 / \sum_{\langle \sigma, \lambda \rangle > 0} A_\lambda A_{-\lambda}.
    \end{equation}
\end{defn}

In the commutative case, where $A$ is the coordinate ring of an affine variety with a $T$-action, the $0$-weight part corresponds to the coordinate ring of the orbit space (affine GIT quotient). Quotienting out by $A_\lambda A_{-\lambda}$ removes the non-trivial orbits in the orbit space, so that $\mathrm{Spec}B(A)$ is the (scheme-theoretic) fixed locus. As $X \to Y$ is the affinization map, we know that $\mathcal{O}(X) = \mathcal{O}(Y)$ as commutative algebras. Therefore, Conjecture \ref{conj:hikita} can be restated as the isomorphism
\begin{equation}
    B(\mathcal{O}(X)) \cong H^*(X^!; k).
\end{equation}

The equivariant Hikita conjecture (the Hikita--Nakajima conjecture, cf. \cite[Conjecture 8.9]{KTWWY}) extends the Hikita conjecture to a conjectural isomorphism between the $B$-algebra of the universal quantization $\mathcal{A}$ of $\mathcal{O}(X)$ and the equivariant cohomology of $X^!$:

\begin{conj}[Hikita--Nakajima conjecture]\label{conj:hikita-nakajima}
Let $X, X^!$ be symplectically dual, and denote by $\mathcal{A}$ the universal quantization of $X$. Then there exist
\begin{itemize}
    \item an isomorphism between the quantization exact sequence
    \begin{center}
        \begin{tikzcd}
    0 \rar & H_2(X)\oplus k\hbar \rar & \mathcal{A}_0^2 \rar & \mathfrak{t} \rar &0
\end{tikzcd}
    \end{center}
    and the cohomology exact sequence
     \begin{center}
        \begin{tikzcd}
  0 \rar & H^2_{T^!\times \mathbb{G}_m} (\mathrm{pt})\rar & H^2_{T^! \times \mathbb{G}_m} (X^!) \rar &  H^2(X^!)\rar & 0.  
    \end{tikzcd}
    \end{center}
of $k$-vector spaces, together with
    \item an isomorphism of graded and commutative $H^2_{T^!\times \mathbb{G}_m} (\mathrm{pt}) \cong k[H_2(X)][\hbar]$-algebras \begin{equation}
    B(\mathcal{A}) \cong H^*_{T^! \times \mathbb{G}_m} (X^!;k),
    \end{equation}
    which specializes to the isomorphism predicted by the Hikita conjecture.
\end{itemize}    
\end{conj}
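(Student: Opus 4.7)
The plan is to establish the conjecture in the principal families where symplectic duality is concretely understood, dual Springer resolutions under Langlands duality and Gale dual hypertoric varieties, since a general proof is out of reach given that (as already noted) the notion of symplectic duality is empirical. In each family the strategy splits naturally into matching the degree-two exact sequences and then lifting this to a graded algebra isomorphism.

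First I would verify the isomorphism of the degree-two exact sequences in the Springer case $X = T^*(G/B)$, $X^! = T^*(G^!/B^!)$. Using the presentation $\mathcal{A} = U_\hbar\mathfrak{g} \otimes_{Z(U\mathfrak{g})} \mathrm{Sym}(\mathfrak{t})[\hbar]$, a direct computation shows that the weight-zero part in degree two splits as $\mathcal{A}^2_0 \cong \mathfrak{t} \oplus \mathfrak{t} \oplus k\hbar$, yielding the quantization exact sequence $0 \to \mathfrak{t} \oplus k\hbar \to \mathcal{A}^2_0 \to \mathfrak{t} \to 0$. On the dual side, the Borel presentation gives the cohomology exact sequence for $T^*(G^!/B^!)$ in the form $0 \to (\mathfrak{t}^!)^* \oplus k\hbar \to H^2_{T^!\times\mathbb{G}_m}(G^!/B^!) \to H^2(G^!/B^!) \to 0$, and the matching is carried out by the Langlands duality identifications $\mathfrak{t} \cong X_\bullet(T) \otimes k \cong X^\bullet(T^!) \otimes k \cong (\mathfrak{t}^!)^*$ and $H^2(G^!/B^!) \cong \mathfrak{t}$.

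Next I would upgrade this to a full isomorphism $B(\mathcal{A}) \cong H^*_{T^! \times \mathbb{G}_m}(T^*G^!/B^!; k)$. Combining the Harish--Chandra isomorphism $Z(U\mathfrak{g}) \cong \mathrm{Sym}(\mathfrak{t})^W$ with Kostant's theorem, one computes that $B(\mathcal{A})$ has the form $\mathrm{Sym}(\mathfrak{t}) \otimes_{\mathrm{Sym}(\mathfrak{t})^W} \mathrm{Sym}(\mathfrak{t})[\hbar]$ up to an appropriate shift of gradings, and the Borel presentation of the equivariant cohomology of $T^*(G^!/B^!)$ has the same form, with the matching again provided by Langlands duality and compatible with the degree-two identification above. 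For the hypertoric case, the argument is analogous: the universal quantization is given explicitly via hypertoric enveloping algebras, the equivariant cohomology has a Konno-type presentation, and Gale duality of the hyperplane arrangement data performs both the matching of exact sequences and the algebra identification mechanically.

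The main obstacle I anticipate is the bookkeeping of gradings and the pinning down of normalizations. The isomorphism of $B(\mathcal{A})$ with $H^*_{T^! \times \mathbb{G}_m}(X^!;k)$ has a nontrivial ambiguity: equivariant parameters on either side can be rescaled by elements of the Weyl group or by combinatorial automorphisms of the arrangement, and producing an isomorphism that is canonical and compatible with the exact sequence identification requires care. A more conceptual matching mechanism, perhaps through a Koszul-type duality or via a BFN Coulomb branch construction mediating between $X$ and $X^!$, would streamline the proof and, more importantly, would be the right framework for extending the result beyond the two families handled above.
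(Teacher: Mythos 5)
The first thing to note is that the paper does not prove this statement: it is the Hikita--Nakajima conjecture of \cite{KTWWY}, quoted in Section~\ref{ssec:quantum-hikita} purely as part of the survey of the (quantum) Hikita framework, and it is open in general. Since ``symplectically dual'' is an empirically defined class (\cref{rem:s-dual-is-empirical}), an argument confined to dual Springer resolutions and Gale-dual hypertoric varieties cannot prove the conjecture as stated; it can only verify it there. That verification is exactly what the literature supplies: \cite[Theorems 6.13 and 7.17]{KMP21} prove the stronger quantum Hikita conjecture for these two families, and the $z \to 0$ specialization recovers the present statement --- subject to the caveat of \cref{rem:hikita-nakajima} that the specialization a priori only gives an isomorphism of \emph{modules} over $\mathrm{Sym}\,\mathcal{A}_0^2 \cong \mathrm{Sym}\,H^2_{T^!\times\mathbb{G}_m}(X^!)$, and one needs generation of $H^*_{T^!\times\mathbb{G}_m}(X^!)$ in degree $2$ to promote this to the claimed algebra isomorphism. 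Your write-up should either invoke this degree-$2$ generation explicitly (it holds in both families, cf.\ \cite[Section 6.2, 7.1, 7.6]{KMP21}) or prove the algebra compatibility directly; as written, the algebra structure on $B(\mathcal{A})$, which is not a priori even commutative, is treated as automatic.

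Within the two families your outline does track the known arguments (the splitting $\mathcal{A}_0^2 \cong \mathfrak{t}\oplus\mathfrak{t}\oplus k\hbar$, the matching of exact sequences via Langlands and Gale duality, Harish--Chandra plus the Borel presentation). The step you compress most is the computation of $B(\mathcal{A}) = \mathcal{A}_0/\sum_{\langle\sigma,\lambda\rangle>0}\mathcal{A}_\lambda\mathcal{A}_{-\lambda}$ itself: showing that the relations $\mathcal{A}_\lambda\mathcal{A}_{-\lambda}$ cut $\mathcal{A}_0$ down to exactly $\mathrm{S}(\mathfrak{t})\otimes_{\mathrm{S}(\mathfrak{t})^W}\mathrm{S}(\mathfrak{t})[\hbar]$ requires the Harish--Chandra projection together with its $\rho$-shift (the substitution $\sigma\mapsto\sigma-\hbar\langle\rho,\sigma\rangle$ that also appears in the proof of \cref{prop:frob-quant-springer}). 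The ``normalization ambiguity'' you flag as the main obstacle is not a residual ambiguity at all but this definite twist, which must be matched against the $\hbar$-deformation of the Borel presentation on the dual side; likewise in the hypertoric case the identity $x_i^k\partial_i^k=(x_i\partial_i)(x_i\partial_i+\hbar)\cdots(x_i\partial_i+(k-1)\hbar)$ is what converts the trace relations into the combinatorial relations of the Gale-dual arrangement. Supplying these computations is the actual content of the verification; the rest of your plan is scaffolding.
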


In particular, Conjecture \ref{conj:hikita-nakajima} predicts isomorphisms $H^2(X) \cong \mathfrak{t}^!$ and $\mathfrak{t} \cong H^2(X^!)$ (the identification of equivariant and K\"ahler parameters). Also note that $B(\mathcal{A})$ is not \emph{a priori} commutative, but the existence of the isomorphism with the commutative $H^*_{T^! \times \mathbb{G}_m} (X^! ;k)$ implies that the ring structure is in fact commutative.

\begin{rem}
    Note that the universal quantization $\mathcal{A}$ is constructed from the commutative $\mathcal{O}(X)$ in two steps: (i) one first takes the universal deformation $\mathcal{X}$ of $X$, whose deformation base is $H^2(X)$, and (ii) one takes the $k[\![\hbar]\!]$-linear quantization of $\mathcal{O}(\mathcal{X})$. These are reflected in the introduction of (i) $T^!$-equivariant parameters $H^2_{T^!}(\mathrm{pt}) \cong (\mathfrak{t}^!)^*$ and the (ii) $\mathbb{G}_m$-equivariant parameter $H^2_{\mathbb{G}_m}(\mathrm{pt}) \cong k \hbar$ on the dual side, respectively.
\end{rem}

The \emph{quantum Hikita conjecture} introduced by \cite[Conjecture 5.1]{KMP21} extends the Hikita--Nakajima conjecture further to include the data of roots and $D$-modules.

\begin{conj}[Quantum Hikita conjecture]\label{conj:hikita-quantum} Let $X, X^!$ be symplectically dual. Then there exist
\begin{itemize}
    \item a bijection of the roots $\Sigma_{eq,+} \cong \Sigma_{Kah, +}^!$, 
    \item a compatible isomorphism of the ring of differential operators $R_{eq} \cong R_{Kah}^!$,
    \item and a compatible isomorphism of the $D$-modules $M_{eq, reg} \cong M_{Kah, reg}^!$ taking $1 \in \mathcal{A}_0^0$ to $1 \in H^0(X^!)$,
\end{itemize}
extending the isomorphisms from the Hikita--Nakajima conjecture.
\end{conj}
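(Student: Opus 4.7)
The plan is to prove the conjecture in stages, building up from the (already-conjectured-and-partially-known) Hikita--Nakajima statement to the full $D$-module isomorphism. Because symplectic duality is empirical (\cref{rem:s-dual-is-empirical}), I would not attempt a uniform proof: instead I would fix a class of dual pairs with explicit presentations---e.g.\ Springer resolutions $T^*G/B \leftrightarrow T^*G^!/B^!$, hypertoric varieties under Gale duality, and more generally Higgs/Coulomb dual pairs $(G,V)$---and in each case carry out the three matching problems (roots, rings of differential operators, $D$-modules) separately.

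First, I would bootstrap from \cref{conj:hikita-nakajima}. Fix an isomorphism $\varphi \colon B(\mathcal{A}) \xrightarrow{\sim} H^*_{T^!\times\mathbb{G}_m}(X^!;k)$ that refines the Hikita--Nakajima identifications, and in particular supplies the identifications $H_2(X)\cong \mathfrak{t}^!$, $\mathfrak{t}\cong H^2(X^!)$, and $\mathcal{A}^2_0 \cong H^2_{T^!\times\mathbb{G}_m}(X^!;k)$ of the two short exact sequences. Next, I would match the root data: on the equivariant side, $\Sigma_{eq,+}$ is determined by the $T$-weight decomposition of $\mathcal{A}^+/(\mathcal{A}^+)^2$, which in the Springer case reduces to the positive roots of $\mathfrak{g}$ and in the hypertoric case to a combinatorial set read off from the hyperplane arrangement. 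On the K\"ahler side, \cref{conj:structure-of-QH} (which is itself known in these examples by the explicit structure of the quantum connection of the dual space) produces $\Sigma^!_{Kah,+}$ that under Langlands/Gale duality corresponds exactly to this set. This step is routine if somewhat involved.

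The core content is to upgrade $\varphi$ to a map of $D$-modules $M_{eq,reg} \xrightarrow{\sim} M^!_{Kah,reg}$ sending $1\mapsto 1$. I would construct this map by defining it first on $1 \otimes \mathcal{A}_0 \subseteq M_{eq}$ via descent from $\mathcal{A}_0 \twoheadrightarrow B(\mathcal{A}) \xrightarrow{\varphi} H^*_{T^!\times\mathbb{G}_m}(X^!;k)$, then extending $k[z^\mu]$-linearly (with $\mu \in \mathbb{Z}_{\ge 0}\Sigma_{eq,+}$ identified with $\mathbb{Z}_{\ge 0}\Sigma^!_{Kah,+}$). The heart of the argument is to check $R_{eq}$-linearity: one must verify that the commutation relation $az^\mu = z^\mu(a + \hbar\langle\mu,\bar a\rangle)$ in $R_{eq}$ matches the formula for quantum multiplication by $\bar x \in H^2_{T^!\times\mathbb{G}_m}(X^!)$ in \cref{conj:structure-of-QH}. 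Concretely, for $a \in \mathcal{A}_0^2$ with image $\varphi(a) = x$, I would show
\begin{equation*}
    x \star_{T^!\times\mathbb{G}_m} \cdot = x \smile \cdot + \hbar \sum_{\alpha \in \Sigma^!_{Kah,+}} \langle\alpha, \bar x\rangle \frac{z^\alpha}{1-z^\alpha} S_\alpha(\cdot)
\end{equation*}
matches the action induced by left multiplication by $a$ on $\mathcal{A}_0 /\{ab - z^\lambda ba\}$, after tracking how the $z^\lambda$-relations encode the denominators $(1-z^\mu)^{-1}$. It suffices to check this on degree-$2$ generators, provided one then invokes a generation statement: for Springer and hypertoric cases, $H^*_{T^!\times\mathbb{G}_m}(X^!;k)$ is generated by its degree-$2$ part over the equivariant parameters, which allows one to bootstrap from the degree-$2$ check.

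The main obstacle will be the quantitative matching of the quantum multiplication operators $S_\alpha$ with the multiplicative structure of $B(\mathcal{A})$ and its quantum deformation $\mathscr{B}(\mathcal{A})$. On the $B$-algebra side, the analogous combinatorics come from Weyl-type localization formulas identifying $\mathcal{A}_\lambda \mathcal{A}_{-\lambda} \pmod{\text{lower weight}}$ with explicit elements of $\mathcal{A}_0$; on the quantum cohomology side, they come from the exact formula for $S_\alpha$, which in the Springer case is the shift operator for affine Weyl group elements and in the hypertoric case is known from combinatorial residue formulas of \cite{KMP21}. Matching these in closed form---effectively, establishing a compatibility of Weyl-denominator combinatorics on the quantized side with quasimap wall-crossing formulas on the quantum side---is where essentially all the geometric input enters, and where a general (non-case-by-case) proof must await a theory (e.g.\ the quantum Kirwan map approach of Dinkins--Karpov--Krylov alluded to in the introduction) that produces both $M_{eq,reg}$ and $M^!_{Kah,reg}$ uniformly as quotients of a common auxiliary $D$-module.
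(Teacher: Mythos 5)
The statement you are addressing is a \emph{conjecture}: the paper does not prove \cref{conj:hikita-quantum}, but records it following \cite{KMP21} and then cites \cite[Theorems 6.13 and 7.17]{KMP21} for the only cases in which it is known, namely hypertoric varieties and the Springer resolution. So the honest benchmark is whether your write-up constitutes a proof in at least those cases, and it does not. What you have written is a research outline in which every genuinely hard step is named and then deferred: the bijection of roots is asserted to be ``routine if somewhat involved'' without being carried out; the $R_{eq}$-linearity check is reduced to a degree-$2$ computation that is never performed; and your final paragraph explicitly concedes that the quantitative matching of the operators $S_\alpha$ with the weight-space products $\mathcal{A}_\lambda\mathcal{A}_{-\lambda}$ --- which you correctly identify as the place where essentially all the geometric input enters --- is left open, pending either case-by-case computation or a uniform theory that does not yet exist. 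A proposal that defers its own core content is a plan, not a proof.

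That said, the architecture you describe is broadly the one used in \cite{KMP21} for the two known cases: fix the Hikita--Nakajima identification of the degree-$2$ data, identify the monoids of roots, define the map on $1\otimes\mathcal{A}_0$ and extend $z$-linearly, and verify the module structure by comparing explicit formulas (the affine KZ/Dunkl connection for Springer resolutions, hypergeometric and residue formulas for hypertoric varieties) on degree-$2$ generators, concluding by generation in degree $2$. Two cautions if you pursue this. First, generation in degree $2$ controls only that the map you build is a surjection of cyclic modules; you still need injectivity, typically via a graded dimension count comparing the Hilbert series of $M_{eq,reg}$ and $M^!_{Kah,reg}$, which your outline omits entirely. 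Second, the quantum $D$-module exists only over the localized ring $R_{Kah,reg}$ (the product $\star$ involves denominators $(1-z^\alpha)^{-1}$ by \cref{conj:structure-of-QH}), so the comparison must be made after inverting $(1-z^\mu)$ on the trace side as well, and one must verify that this localization is well defined and loses no information --- this is nontrivial precisely because $R_{eq}$ is noncommutative, and is handled in \cite[Lemma 3.4, Proposition 3.5]{KMP21} rather than being automatic.
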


The compatibility of the isomorphisms refers to the following. First, there is an identification of the equivariant roots and the K\"ahler roots of $X$, $X^!$ respectively, inducing an isomorphism of monoids $\mathbb{Z}_{\ge 0} \Sigma_{eq, +} \cong \mathbb{Z}_{\ge 0} \Sigma_{Kah, +}^!$. Together with the isomorphism $\mathcal{A}_0^2 \cong H^2_{T^! \times \mathbb{G}_m}(X^!)$ depicted by the equivariant Hikita conjecture, we have an isomorphism of graded $k[\hbar]$-algebras
\begin{equation}
R_{eq} :=  k[z^\mu : \mu \in \mathbb{Z}_{\ge 0} \Sigma_{eq, +}] \otimes \mathrm{Sym} \mathcal{A}_0^2 \cong k [z^\alpha : \alpha \in \mathbb{Z}_{\ge 0 } \Sigma_{Kah, +}^!] \otimes \mathrm{Sym}H^2_{T^! \times \mathbb{G}_m}(X^!) =: R_{Kah}^!.
\end{equation}
After localization, this isomorphism implies the isomorphism of graded algebras
\begin{equation}
    R_{eq, reg} \cong R_{Kah, reg}^!.
\end{equation}
Lastly, we should have an isomorphism
\begin{equation*}
    M_{eq, reg} := \mathscr{B}(\mathcal{A}) [(1-z^\mu)^{-1}] \cong  \Lambda \otimes H^*_{T^! \times \mathbb{G}_m}(X^{!}) =: M_{Kah, reg}^! 
\end{equation*}
of graded $R_{eq, reg} \cong R_{Kah, reg}^!$-modules.

\begin{rem}\label{rem:hikita-nakajima}
    If we take the $z \rightarrow 0$ limit on the $D$-modules, the quantum Hikita conjecture specializes to an isomorphism $B(\mathcal{A}) \cong H^*_{T^! \times \mathbb{G}_m}(X^{!})$ as graded \emph{modules} over $\mathrm{Sym} \mathcal{A}_0^2 \cong \mathrm{Sym}H^2_{T^! \times \mathbb{G}_m}(X^!)$, which is \emph{a priori} weaker than the algebra isomorphism expected by the Hikita--Nakajima conjecture. However, in examples for which the quantum Hikita conjecture is proven in \cite{KMP21}, $H^*_{T^! \times \mathbb{G}_m}(X^{!})$ is generated by degree $2$ classes.
\end{rem}

\begin{thm}[{\cite[Theorem 6.13, Theorem 7.17]{KMP21}}]
    The quantum Hikita conjecture holds for hypertoric varieties and the Springer resolution.
\end{thm}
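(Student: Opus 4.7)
The plan is to treat the two families of examples separately, in each case giving an explicit combinatorial or representation-theoretic presentation of both $M_{eq,reg}$ and $M^!_{Kah,reg}$ and producing the desired isomorphism by matching connection coefficients. In both examples the underlying identification $B(\mathcal{A}) \cong H^*_{T^!\times\mathbb{G}_m}(X^!)$ of the Hikita--Nakajima conjecture is already available, so the remaining work is to promote this to a $D$-module isomorphism that is compatible with the roots.

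For the Springer pair $X = T^*G/B$, $X^! = T^*G^!/B^!$, the first step is to compute $\mathscr{B}(\mathcal{A})$ using $\mathcal{A} = U_\hbar\mathfrak{g}\otimes_{Z(U\mathfrak{g})}\mathrm{S}(\mathfrak{t})[\hbar]$. The equivariant roots $\Sigma_{eq,+}$ should come out to be the positive roots of $\mathfrak{g}$, since $\mathcal{A}^+/(\mathcal{A}^+)^2$ is controlled by the Chevalley generators. The trace relations $ab - z^\lambda ba$ in \cref{defn:Dmod-eq}, after projection to the weight-zero degree $2$ part, reduce to explicit differential operators indexed by roots. On the K\"ahler side, the $T^!\times\mathbb{G}_m$-equivariant quantum connection of $T^*G^!/B^!$ has been identified through Maulik--Okounkov's stable envelope formalism with a specialization of the trigonometric Casimir connection for $\mathfrak{g}^!$, a connection whose $1$-form is $\sum_{\alpha^\vee > 0} C_{\alpha^\vee}\,\frac{dz^{\alpha^\vee}}{1-z^{\alpha^\vee}}$ plus a Cartan piece. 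Langlands duality $\alpha \leftrightarrow \alpha^\vee$ supplies the bijection $\Sigma_{eq,+}\cong\Sigma^!_{Kah,+}$, and the proof then comes down to matching the root-space operators $C_{\alpha^\vee}$ with the trace-side operators coming from $ab - z^\lambda ba$.

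For hypertoric varieties, both sides are controlled by the same combinatorial input: a short exact sequence $0 \to \mathbb{Z}^k \to \mathbb{Z}^n \to \mathbb{Z}^d \to 0$ determines a hypertoric variety, and its Gale dual $0 \to \mathbb{Z}^d \to \mathbb{Z}^n \to \mathbb{Z}^k \to 0$ determines the symplectic dual. The hypertoric enveloping algebra is an explicit subquotient of a Weyl algebra, and $\mathscr{B}(\mathcal{A})$ identifies with a GKZ-type $A$-hypergeometric system attached to the associated hyperplane arrangement. Dually, the equivariant quantum cohomology of the Gale dual hypertoric variety, after McBreen--Shenfeld, is described by the $A$-hypergeometric system of the Gale dual arrangement. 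The proof then amounts to the observation that Gale duality interchanges the two hypergeometric systems, sending equivariant parameters to K\"ahler parameters and vice versa.

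The main obstacle in both cases is the verification of the connection matrix entry by entry, not merely at the level of parameters and roots. For Springer this requires presenting the trace side in a form where the covariant derivative $\hbar\partial_x + x\star$ is manifest, which in turn uses a careful PBW basis together with Kostant's theorem on the Harish-Chandra center, yielding a closed-form expression directly comparable with the trigonometric Casimir connection. For hypertoric varieties the combinatorics is cleaner, but one must carefully track signs, the shifts $a \mapsto a + \hbar\langle\lambda,\bar a\rangle$ from \cref{defn:Dmod-eq}, and the identification of loop and conical $\hbar$ from \cref{rem:hbar-vs-t}. Finally, the generation-in-degree-$2$ phenomenon, available here via the Borel presentation of $H^*(G^!/B^!)$ and the Orlik--Solomon-type presentation for hypertoric cohomology, allows one to promote a comparison of degree-$2$ actions into a full isomorphism of $D$-modules, sidestepping a direct check of higher-degree operators.
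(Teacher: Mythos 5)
This statement is not proved in the paper at all: it is imported verbatim from \cite[Theorems 6.13 and 7.17]{KMP21}, and the present paper only ever uses it as a black box (namely, to know that the isomorphism $M_{eq,reg}\cong M^!_{Kah,reg}$ exists before matching the characteristic-$p$ central endomorphisms on the two sides). So there is no in-paper proof to compare against, and any assessment must be made against the cited source.

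Measured against that source, your outline reconstructs the correct strategy: for the Springer pair, compute $\mathscr{B}(\mathcal{A})$ from $U_\hbar\mathfrak{g}\otimes_{Z(U\mathfrak{g})}\mathrm{S}(\mathfrak{t})[\hbar]$, identify $\Sigma_{eq,+}$ with the positive roots, and compare with the quantum connection of the Langlands dual flag variety, which is the trigonometric Casimir / affine KZ connection (this identification is due to Braverman--Maulik--Okounkov, cited in the paper as \cite{BMO11}, rather than to the stable envelope formalism, which is a minor misattribution); for hypertoric varieties, use the hypertoric enveloping algebra on one side, the McBreen--Shenfeld description of equivariant quantum cohomology on the other, and Gale duality to exchange the combinatorial data. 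This is indeed how \cite{KMP21} proceeds. However, what you have written is a proof plan, not a proof: every substantive step is deferred. You yourself flag that "the main obstacle in both cases is the verification of the connection matrix entry by entry," and that is precisely where all the content lies --- the reduction of the trace relations $1\otimes ab - z^{\lambda}\otimes ba$ to explicit residue-type operators, the bookkeeping of the $\rho$-shift in the Harish--Chandra isomorphism, the sign and shift conventions $a\mapsto a+\hbar\langle\lambda,\bar a\rangle$, and the degree-$2$ generation argument that upgrades a comparison of connections to an isomorphism of $D$-modules. As a blind reconstruction of the cited argument's architecture it is accurate; as a self-contained proof it has not yet begun the computations that constitute the theorem.
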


\section{Quantum Hikita conjecture mod $p$: Statement}\label{sec:quantum-hikita-modp}

In this section, we provide the statement of our mod $p$ 3D mirror symmetry conjecture, inspired by the quantum Hikita proposal of \cite{KMP21}, as an isomorphism of the mod $p$ quantum $D$-module and the mod $p$ $D$-module of twisted traces for symplectically dual pairs, \emph{together with the action of Frobenius-linear covariantly constant operators}.

For the mod $p$ quantum $D$-module, these Frobenius-linear operators are given by the \emph{quantum Steenrod operators} \cite{seidel-wilkins} (or equivariant generalizations thereof, \cite{Lee23b}). We emphasize that for the quantum $D$-module the symplectic resolution itself is still defined over $\mathbb{C}$, and the positive characteristic only enters through the coefficients of quantum cohomology algebra.

For the mod $p$ $D$-module of twisted traces, these Frobenius-linear operators are given by the multiplication operators arising from \emph{Frobenius-constant quantizations} of the underlying symplectic singularity. Such Frobenius-constant quantizations exist for a large class of symplectic singularities, including Springer resolutions \cite{bezrukavnikov-mirkovic-rumynin} and all BFN Coulomb branches \cite{lonergan}.

We fix $k$ an algebraically closed field of characteristic $p$, which is assumed to be $\gg 0$ if necessary.

\begin{rem}
    Note that the constructions introduced in this section, e.g. that of quantum Steenrod operators, can be made for any coefficients of positive characteristic, and there is no need to take $k = \mathbb{F}_p$. However, there may exist a finite set of primes over which the relevant objects (such as the quantum $D$-module) cannot be defined, which is why we fix a generic $p$ (cf. the discussion in \cref{sssec:exmp-springer}).
\end{rem}

Denote by $\mu_p := \mu_p(\mathbb{C}) \subseteq \mathbb{G}_m(\mathbb{C})$ the multiplicative group of complex $p^\mathrm{th}$ roots of unity. By choosing a generator $\zeta = e^{2\pi i/p} \in \mu_p$, we obtain (and fix) an isomorphism $\mu_p \cong \mathbb{Z}/p\mathbb{Z}$ of abelian groups.

We now describe the candidate descriptions for the Frobenius-linear actions mentioned above. 

\subsection{Quantum Steenrod operations}\label{ssec:oper-qst}

In characteristic $p$, the quantum $D$-module admits an action of the (Frobenius-twisted) mod $p$ quantum cohomology algebra, known as the quantum Steenrod operations. These are (Frobenius-)linear operators that deform the usual action of Steenrod powers in mod $p$ singular cohomology by taking counts of certain holomorphic spheres with a $\mathbb{Z}/p$-symmetry into account. 

\subsubsection{A heuristic description}\label{sssec:qst-heuristic}
We discuss the quantum Steenrod operators of \cite{seidel-wilkins}, which reformulates the quantum Steenrod operations of \cite{Fuk97} as linear endomorphisms of the mod $p$ quantum cohomology. We also need their torus-equivariant generalizations introduced in \cite{Lee23b} for applications to conical Hamiltonian symplectic resolutions.

The actual definition of quantum Steenrod operators involves constructions in symplectic enumerative geometry, due to technical issues in curve-counting with positive characteristic coefficients; in particular, an algebro-geometric definition is not currently available.  Here we give a heuristic description of how the definition of these operators would look like \emph{in a hypothetical situation} where these foundational issues do not exist, and point out the actual difficulties. For the actual construction in symplectic geometry which allows one to overcome such difficulties, we refer to \cite{seidel-wilkins} (for the original definition) and \cite{Lee23a, Lee23b} (for the torus-equivariant versions).

Fix $C =\mathbb{P}^1 = \mathbb{C} \cup \{ \infty \}$ to be a curve with the following $(p+2)$ distinguished marked points
\begin{equation}
z_0 = 0, \ z_1 = \zeta = e^{2\pi i / p}, \dots, z_p = \zeta^p = 1,\  z_\infty = \infty.
\end{equation}
We consider parametrized maps from $C$ to a smooth (quasi)projective complex variety $X$.

Let $\tau$ be the rotation of the curve $C$ by the angle of $2\pi/p$ which fixes $z_0$ and $z_\infty$, then, it cyclically permutes the marked points as
\begin{equation}                                  
    \tau : (C, z_0, z_1 , z_2 \dots, z_p, z_\infty) \mapsto (C, z_0, z_2, z_3, \dots, z_1, z_\infty).
\end{equation}
Using $\tau$, one defines an action of the cyclic group $\sg$ on $C$, whose generator is identified with $\zeta = e^{2\pi i/p}$. For $\eta = \zeta^j \in \mathbb{Z}/p$, we denote by $\tau(\eta) = \tau^j$ the corresponding automorphism of $C$.

There is also an action of $\mathbb{Z}/p$ which cyclically permutes the product $X^p$:
\begin{align}\label{eqn:X^p-cyclic-permutation}
    \sigma_{X} :& \ \mathbb{Z}/p \times  X \times X^p \times X \to X \times X^p \times X, \\
    & [\eta; (x_0; x_1, x_2, \dots, x_{p}; x_\infty)] \mapsto (x_0; x_{1+\eta}, x_{2+\eta}, \dots, x_{p+\eta}; x_\infty),
\end{align}
where the numbers $k+\eta$ are read mod $p$.

The moduli space of parametrized maps $u: C  \cong \mathbb{P}^1 \to X$,
\begin{equation}
\mathcal{M}_\alpha := \{ u : C \to X : u_*[C] = \alpha \in H_2(X;\mathbb{Z}) \},
\end{equation}
carries an evaluation map
\begin{align}
    \mathrm{ev}: \mathcal{M}_\alpha &\to X \times X^p \times X \\
    (u: C \to X) &\mapsto \left( u(z_0); u(z_1), \dots, u(z_p); u(z_\infty) \right)
\end{align}
that is $\mathbb{Z}/p$-equivariant with respect to the action on $\mathcal{M}_\alpha$ induced by source rotation $\tau$ and the action $\sigma_X$ on the target. The moduli space $\mathcal{M}_\alpha$ is noncompact in general, and must be suitably compactified by stable maps as is well known (see, e.g. \cite[Chapter 6]{MS12}). When the target $X$ is non-compact, the properness of the stable maps and the evaluation map $\mathrm{ev}$ require additional hypotheses on $X$ such as the existence of the conical $\mathbb{G}_m$-action, see e.g. \cite{RZ23}.  Let us assume that $\mathcal{M}_\alpha$ admits a $\mathbb{Z} / p$-equivariant compactification $\overline{\mathcal{M}}_\alpha$ (by stable maps) over which $\mathrm{ev}$ extends to a $\mathbb{Z} / p$-equivariant proper map, and continue to denote the extension by $\mathrm{ev}$ (such an extension is indeed provided in \cite{Lee23a, Lee23b}).

Fix a cohomology class $x \in H^*(X;k)$, and consider its image
$x_{eq} := x \otimes x \otimes \cdots \otimes x \in H^*_{\mu_p}(X^p ; k)$
under the (non-linear) map that sends $x \in H^*(X;k)$ to its $p$-fold tensor power. \emph{Assuming the existence of a suitable equivariant virtual fundamental cycle of $\overline{\mathcal{M}}_\alpha$ over $k$-coefficients}
$$[\overline{\mathcal{M}}_\alpha]^{vir}_{\mu_p} \in H_{-*}^{\mu_p} (\overline{\mathcal{M}}_\alpha ; k),$$ 
one would define the corresponding ``correlator'' by the push-pull diagram
\begin{center}
    \begin{tikzcd}
        \overline{\mathcal{M}}_\alpha \rar["\mathrm{ev}"] \dar["\mathrm{proj}"] & X \times X^p \times X \\ \mathrm{pt} & 
    \end{tikzcd}
\end{center}
as
\begin{equation}
\langle x_0 ; x_{eq}; x_\infty \rangle_{0, \alpha} := (\mathrm{proj})_* \left( [\overline{\mathcal{M}}_\alpha]^{vir}_{\mu_p} \cap \mathrm{ev}^* (x_0 \otimes x_{eq} \otimes x_\infty)  \right) \in H^*_{\mu_p}(\mathrm{pt}) \cong k [\![t, \theta]\!].
\end{equation}
Here, $H^*_{\mu_p}(\mathrm{pt}) \cong k [\![t, \theta]\!]$ is the group cohomology of $\mu_p(\mathbb{C})$ over $k$, the graded polynomial algebra on two commuting variables $t, \theta$ of degree $|t| = 2$ and $|\theta| = 1$. (The notation $k[\![t,\theta]\!]$ refers to the \emph{$\mathbb{Z}$-graded} completion, and since $t, \theta$ carry nontrivial degrees, $k[\![t,\theta]\!] \cong k [t] \otimes\Lambda(\theta)$.)

Now recall that the quantum cohomology ring is an algebra over the (completed) \emph{Novikov ring} $\hat{\Lambda} = k[\![z^\alpha : \alpha \in H_2^{\mathrm{eff}}(X;\mathbb{Z})]\!]$ (cf. \cref{sssec:eq-kah-roots}). The quantum Steenrod operators are defined as linear operators on quantum cohomology over $k$, whose structure constants are given by the correlators above. For the definition, we fix a $k$-linear basis $B$ of $H^*(X;k)$ (and the definition is independent of the basis by linearity):
\begin{defn}[A \emph{pseudo}-definition]
    Fix $x \in H^*(X;k)$. The \emph{quantum Steenrod operator} $\Sigma_x : H^*(X;\hat{\Lambda}) \to H^*_{\mu_p}(X;\hat{\Lambda}) \cong H^*(X;\hat{\Lambda}) [\![t, \theta]\!]$ is the $\hat{\Lambda}$-linear map acting on $x_0 \in H^*(X;k)$ as
    \begin{equation}
        \Sigma_x (x_0) = \sum_{\alpha \in H_2^{\mathrm{eff}}(X;\mathbb{Z})} \sum_{x_\infty \in B} \langle x_0 ; x_{eq}; x_\infty \rangle_{0, \alpha}  \ x_\infty^\vee \ z^\alpha \in H^*_{\mu_p}(X;\hat{\Lambda}),
    \end{equation}
    where $x_\infty^\vee$ denotes the Poincar\'e dual of the basis element $x_\infty \in B$.
\end{defn}
By extending $t, \theta$-linearly, this can be made into an endomorphism of $H^*(X;\hat{\Lambda}) [\![t, \theta]\!]$.

The actual construction, outlined in \cite[Section 4]{seidel-wilkins} or \cite[Section 2.4]{Lee23a}, has to go through symplectic enumerative geometry and moduli space of pseudo-holomorphic maps due to the lack of theory providing the required equivariant VFC $[\overline{\mathcal{M}}_\alpha]^{vir}_{\mu_p}$ to define the correlators $\langle x_0; x_{eq}; x_\infty \rangle_{0, \alpha}$. 

In the presence of a $T$-action on $X$, the equivariant generalizations of these operators are given in \cite[Definition 2.17]{Lee23b}. The operators are indexed by $b \in H^*_T(X;k)$, and are defined as $\hat{\Lambda}[\![t,\theta]\!]$-linear endomorphisms $\Sigma_b^T \in \mathrm{End} \left(H^*_{T, loc}(X;\hat{\Lambda})[\![t,\theta]\!]\right)$ of localized equivariant cohomology. The main technical result of \cite{Lee23b} can be summarized as follows:

\begin{thm}[{\cite[Corollary 3.17]{Lee23b}}]
    Fix a conical Hamiltonian symplectic resolution $X$ with its $T \times \mathbb{G}_m$-action with isolated $T$-fixed points. Fix a $T$-equivariant cohomology class $x \in H^*_T(X;k)$. Then the corresponding \emph{$T \times \mathbb{G}_m$-equivariant} quantum Steenrod operator
    \begin{equation}
        \Sigma_x^T: H^*_{T \times \mathbb{G}_m, loc} (X; \hat{\Lambda}) [\![{t, \theta}]\!] \to H^*_{T \times \mathbb{G}_m, loc} (X; \hat{\Lambda}) [\![{t, \theta}]\!],
    \end{equation}
    where $H^*_{T \times \mathbb{G}_m, loc} (X; \hat{\Lambda}) [\![{t, \theta}]\!] := H^*_{T \times \mathbb{G}_m} (X; \hat{\Lambda}) [\![{t, \theta}]\!] \otimes \mathrm{Frac}(H^*_{T \times \mathbb{G}_m}(\mathrm{pt}) )$ is the localized equivariant cohomology, is well-defined. 
\end{thm}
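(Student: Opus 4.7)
The plan is to construct $\Sigma^T_x$ through $T$-equivariant virtual Atiyah--Bott localization applied to the compactified pseudo-holomorphic moduli spaces, leveraging the hypothesis of isolated $T$-fixed points to reduce all curve-counting to compact $T$-fixed loci. Although the heuristic definition requires an equivariant virtual fundamental class $[\overline{\mathcal{M}}_\alpha]^{\mathrm{vir}}_{T \times \mu_p}$ unavailable in characteristic $p$ in full generality, the conical Hamiltonian structure together with isolated fixed points ensure that only a $T$-localized version is needed to define the correlators $\langle x_0; x_{eq}; x_\infty \rangle_{0,\alpha}$ as elements of $H^*_{T \times \mathbb{G}_m, \loc}(\mathrm{pt})[\![t,\theta]\!]$.

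Concretely, I would proceed in three steps. First, endow the Gromov-type compactification $\overline{\mathcal{M}}_\alpha$ with its natural $T \times \mu_p$-action (with $T$ on the target and $\mu_p$ rotating the source via $\tau$ fixing $z_0, z_\infty$), so that the evaluation map to $X \times X^p \times X$ is $T \times \mu_p$-equivariant and proper. Second, describe $\overline{\mathcal{M}}_\alpha^T$: since $X^T$ is finite, each $T$-fixed stable map decomposes into components mapping either to a point of $X^T$ or to a $T$-invariant rational curve between two fixed points, giving a stratification into compact pieces indexed by decorated graphs in the familiar Atiyah--Bott--Graber--Pandharipande style. Third, apply virtual localization to define
\begin{equation*}
    \langle x_0; x_{eq}; x_\infty \rangle_{0,\alpha} \;=\; \sum_{F \subseteq \overline{\mathcal{M}}_\alpha^T} (\mathrm{proj}_F)_* \!\left( \frac{\mathrm{ev}_F^*(x_0 \otimes x_{eq} \otimes x_\infty) \cap [F]^{\mathrm{vir}}_{\mu_p}}{e^{T}(N_F^{\mathrm{vir}})} \right);
\end{equation*}
each $F$ being compact and $N_F^{\mathrm{vir}}$ having only nonzero $T$-weights makes this a well-defined class in $H^*_{T \times \mathbb{G}_m, \loc}(\mathrm{pt})[\![t,\theta]\!]$, and the commuting $T$- and $\mu_p$-actions preserve $\mu_p$-equivariance throughout.

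Assembling matrix coefficients with respect to a basis of the target and summing over $\alpha \in H_2^{\mathrm{eff}}(X;\mathbb{Z})$ with Novikov weights produces the endomorphism $\Sigma^T_x$; the Novikov-adic completion absorbs the infinite sum. Independence of auxiliary choices---compatible almost complex structures and $\mu_p$-equivariant perturbation data---follows from a parametrized cobordism argument, again localized to the $T$-fixed strata. The main obstacle is constructing a $T \times \mu_p$-equivariant virtual structure whose restriction to $\overline{\mathcal{M}}_\alpha^T$ supports the required push-forwards and cap products with characteristic $p$ coefficients; the resolution, pursued in the prior work of the second-named author, is to use $T$-equivariant global Kuranishi models whose restrictions to the $T$-fixed loci admit transverse $\mu_p$-equivariant perturbations. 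One cannot hope for such transversality on the full $\overline{\mathcal{M}}_\alpha$, which is precisely why the isolated-fixed-point hypothesis is essential.
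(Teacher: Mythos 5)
First, note that the paper does not actually prove this statement: it is imported wholesale from \cite[Corollary 3.17]{Lee23b}, and the surrounding text explicitly disclaims the heuristic you are trying to make rigorous --- ``the actual construction \dots has to go through symplectic enumerative geometry and moduli space of pseudo-holomorphic maps due to the lack of theory providing the required equivariant VFC.'' Your proposal is essentially that heuristic, upgraded by restricting to $T$-fixed strata and invoking virtual localization; this is not the route taken by the cited work, and it has concrete gaps.

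The central gap is the existence of the $\mu_p$-equivariant virtual class $[F]^{\mathrm{vir}}_{\mu_p}$ on the fixed strata, which you defer to ``transverse $\mu_p$-equivariant perturbations'' there. Equivariant transversality for a finite group acting with isotropy generally fails, and restricting to $T$-fixed loci does nothing to help: the $\mu_p$-action on a $T$-fixed stable map can still have large stabilizer (e.g.\ for $\mu_p$-invariant multiple covers of $T$-invariant spheres). The mechanism that actually makes the construction work in \cite{seidel-wilkins, Lee23a, Lee23b} is that the $\mu_p$-action on the domain $(C, z_1, \dots, z_p)$ is free away from $z_0, z_\infty$, so one chooses domain-dependent perturbation data parametrized over the Borel construction and achieves transversality of the parametrized problem; the output is a class in $H^*_{\mu_p}(\mathrm{pt}) \cong k[\![t,\theta]\!]$ rather than an honest $\mu_p$-equivariant cycle pushed forward from a fixed locus. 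Relatedly, you misidentify the role of the isolated-fixed-point hypothesis: it is not what rescues equivariant transversality, but what guarantees compactness of the constrained moduli spaces for the noncompact target (curves meeting the compact core where the fixed points live) and gives the fixed-point basis of $H^*_{T\times\mathbb{G}_m, \mathrm{loc}}$ used to assemble the operator. Two further real issues: (i) inverting $e^T(N_F^{\mathrm{vir}})$ over a field of characteristic $p$ requires the $T$-weights of the virtual normal bundle to be nonzero mod $p$, an extra genericity condition on $p$ (uniform over the classes $\alpha$ contributing at each Novikov order) that you never impose; and (ii) the virtual localization formula is a theorem about an already-defined invariant --- taking it as the \emph{definition} obliges you to prove independence of choices and the module, Cartan, and covariant-constancy properties that the rest of the paper relies on, and your ``parametrized cobordism argument'' is not available until the equivariant perturbation framework above is in place.
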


\begin{rem}
    It is an interesting open question to define the quantum Steenrod operators algebro-geometrically. On the other hand, power operations similar to the Steenrod operations exist for other cohomology theories, including complex $K$-theory, over which one can define the \emph{Adams operation}. Its quantum counterpart can be set up using algebraic geometry (cf. \cite{givental-lee, lee-K}), which will be studied in forthcoming work.
\end{rem}

\subsubsection{Properties of quantum Steenrod operations}\label{sssec:qst-properties}

In this subsection, we record the important properties of the ($T\times \mathbb{G}_m$-equivariant) quantum Steenrod operators, in particular its equivalence with the $p$-curvature of the quantum $D$-module. For our proofs, we only need these algebraic properties rather than their actual constructions.

Consider the localized equivariant cohomology with Novikov coefficients, $H^*_{T \times \mathbb{G}_m, loc} (X; \hat{\Lambda}) [\![{t, \theta}]\!] := H^*_{T \times \mathbb{G}_m} (X; \hat{\Lambda}) [\![{t, \theta}]\!] \otimes \mathrm{Frac}(H^*_{T \times \mathbb{G}_m}(\mathrm{pt}) )$. The quantum Steenrod operator $\Sigma_x^T$ is a $\hat{\Lambda} \otimes k[\![t, \theta]\!] \otimes H^*_{T \times \mathbb{G}_m}(\mathrm{pt})$-linear endomorphism of this vector space. Note that this group also carries a ($T \times \mathbb{G}_m$-equivariant) quantum product, which we denote by $\star_{T}$.

\begin{rem}
    The choice of the subscript $T$, instead of $T \times \mathbb{G}_m$, is somewhat unconventional, but by the deformation argument described in e.g. \cite[Section 1.1]{BMO11} the quantum product reduces to the classical product without working $\mathbb{G}_m$-equivariantly.
\end{rem}

The quantum Steenrod operators are related to the usual Steenrod operators as follows. Recall that given a cohomology class $x \in H^*_T(X;k)$, one has the (non-linear) map
\begin{align}
    H^*_T(X;k) &\to H^*_{T^p \rtimes \mu_p} (X^p;k) \\
    x &\mapsto x_{eq} := x \otimes x \otimes \cdots \otimes x.
\end{align}
Note that there is an action of $T \times \mu_p$ on $X$, where $\mu_p$ acts trivially. The diagonal inclusion $\Delta: X \to X^p$, which is $(T \times \mu_p, T^p \rtimes \mu_p)$-equivariant, induces a pullback map in equivariant cohomology. Through this pullback we define
\begin{defn}
    The \emph{total Steenrod power} of $x \in H^*_T(X;k)$ is the composition
    \begin{equation}
        \mathrm{St}^T (x) := \Delta^* (x_{eq}) \in H^*_{T \times \mu_p}(X;k) \cong H^*_T(X;k)[\![t, \theta]\!]
    \end{equation}
    where the last isomorphism is the K\"unneth isomorphism for the trivial $\mu_p$-action on $X$.
\end{defn}
The coefficients in $H^*_T(X;k)$ of $\mathrm{St}^T(x)$ as a polynomial in $t, \theta$ are (up to a sign, see \cite[Remark 1.3]{seidel-wilkins} or \cite[Section 2.1]{lonergan}) the usual Steenrod operations on $x$ in singular cohomology with positive characteristic coefficients, hence the name. 

\begin{prop}\label{prop:qst-properties}
    For $x \in H^*_T(X;k)$, the operator $\Sigma_x^T$ satisfies the following properties.
    \begin{enumerate}[topsep=-2pt, label=(\roman*)]
        \item $\Sigma_1^T = \mathrm{id}$ for $1 \in H^0_T(X;k)$, the unit class.
        \item $\Sigma_{c x}^T = c^p \Sigma^T_{x}$ for $c \in \Lambda' = k[z^\alpha: \alpha \in H_2^{\mathrm{eff}}(X;\mathbb{Z})]$ a constant.
        \item $\Sigma_x^T(x_0)|_{(t,\theta) = 0} = \overbrace{x \star_T \cdots \star_T x}^{p} \star_T \ x_0$.
        \item $\Sigma_x^T(x_0)|_{z^\alpha = 0} = \mathrm{St}^{T}(x) \smile_{T} x_0$, where $\mathrm{St}^T(b) \in H^*_{T \times \mathbb{G}_m}(X;k)\zpeq$ is the total Steenrod power in $T \times \mathbb{G}_m$-equivariant cohomology, and $\smile_T$ is the $(T\times \mathbb{G}_m)$-equivariant cup product.
        \item $\Sigma_{x \star_T x'}^T = (-1)^{\frac{p(p-1)}{2} |x| |x'|} \Sigma_x^T \circ \Sigma_{x'}^T$.
    \end{enumerate}
\end{prop}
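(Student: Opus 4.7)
The plan is to derive properties (i)--(iv) directly from the moduli-theoretic construction of $\Sigma_x^T$, while (v) is the substantive content that requires a degeneration argument. Properties (i)--(iv) do not use any specific feature of conical symplectic resolutions beyond the existence of the equivariant curve counts underlying the operators; they are formal consequences of the definition together with standard Gromov--Witten axioms, adapted to the equivariant setting in \cite{Lee23a, Lee23b}.

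For (i), I would observe that when $x = 1$ the evaluation constraints at the $p$ equivariant marked points are trivial since $1_{eq} = 1$, so the correlator reduces to a two-pointed invariant which by the unit/string axiom produces the Poincar\'e pairing at $\alpha = 0$ and vanishes otherwise, yielding $\Sigma_1^T = \mathrm{id}$. Property (ii) is immediate from the Frobenius-semilinearity of $x \mapsto x_{eq} = x^{\otimes p}$: multiplying $x$ by a Novikov constant $c \in \Lambda'$ replaces $x_{eq}$ by $c^p x_{eq}$. For (iii), specialization $(t,\theta) = 0$ forgets the $\mu_p$-equivariant structure on $\overline{\mathcal{M}}_\alpha$ and reduces $\langle x_0; x_{eq}; x_\infty\rangle_{0,\alpha}$ to the usual $(p+2)$-pointed genus $0$ equivariant Gromov--Witten invariant with insertions $x_0, x, \ldots, x, x_\infty$, which by definition assembles into the $p$-fold quantum product $x \star_T \cdots \star_T x \star_T x_0$. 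For (iv), the specialization $z^\alpha = 0$ retains only the $\alpha = 0$ contribution, where the moduli is $X \times \overline{\mathcal{M}}_{0, p+2}$ with $\mu_p$ cyclically permuting the middle markings, and the equivariant diagonal pullback $\Delta^*(x_{eq})$ is by definition the total Steenrod power $\mathrm{St}^T(x) \in H^*_{T \times \mu_p}(X;k)$.

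The main obstacle is (v), the quantum Cartan-type formula. I plan to prove it via a degeneration argument in the spirit of \cite{seidel-wilkins} adapted to the $T$-equivariant setting. The idea is to consider a one-parameter family of configurations on $\mathbb{P}^1$ with $2p+2$ marked points: $z_0, z_\infty$ together with two $\mu_p$-orbits, one carrying $p$ copies of $x$ and the other $p$ copies of $x'$, parametrized by their relative radial scale $\lambda \in \mathbb{G}_m$. At $\lambda = 1$ the two orbits collide pairwise; each collision is resolved through quantum multiplication, producing $p$ insertions of $x \star_T x'$ and hence $\Sigma_{x \star_T x'}^T$, with the Koszul sign $(-1)^{p(p-1)|x||x'|/2}$ arising from permuting the marked points into this order. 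In the opposite limit $\lambda \to 0$, the curve degenerates into two bubbles meeting at a node, each carrying one $\mu_p$-orbit and two extra marked points (one of $z_0, z_\infty$ together with the nodal point), and a TQFT-type gluing formula in equivariant Gromov--Witten theory identifies the count with the composition $\Sigma_x^T \circ \Sigma_{x'}^T$. Independence of the count from $\lambda$, which follows from an $S^1$-localization on the configuration parameter, forces the two sides to coincide.

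The principal technical difficulty is the construction of the $(T \times \mu_p)$-equivariant virtual fundamental cycles on the relevant multi-parameter moduli spaces for the non-compact target $X$, with the compactness and transversality needed for the degeneration to yield an algebraic identity at the level of structure constants. I would invoke the symplectic framework developed in \cite{Lee23a, Lee23b}, which uses $T$-fixed point localization to reduce to compact loci, to supply these. Granting this machinery and the analogous setup in the non-equivariant case from \cite{seidel-wilkins}, all five properties become essentially formal consequences of the moduli-space definitions.
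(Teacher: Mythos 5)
The paper does not actually prove this proposition: it is a recollection of properties established in \cite{Fuk97, Wil20, seidel-wilkins, Lee23a, Lee23b}, and the text following the statement only explains what each item means. So your proposal is not competing with an argument in the paper; it is attempting to supply one. With that in mind: your sketches of (ii), (iii), and (iv) are the standard arguments and are essentially correct in outline ((ii) is immediate from $x\mapsto x^{\otimes p}$; (iii) and (iv) are the two specializations of the defining correlator). For (i), the ``unit/string axiom'' reasoning is not quite the right mechanism in this parametrized setting: forgetting the $p$ unit insertions does not change stability of a parametrized map, and for a symplectic resolution ($c_1=0$) a dimension count does not kill the $\alpha\neq 0$ terms. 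The actual vanishing uses the $\mathbb{Z}/p$-equivariance: after forgetting the orbit markings, nonconstant contributions organize into free $\mathbb{Z}/p$-orbits and are annihilated in the equivariant theory, leaving only the constant-map ($\alpha=0$) term.

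The genuine gap is in (v), which is the only substantive item. Two points. First, your justification for independence of the count from the radial parameter $\lambda$ --- ``$S^1$-localization on the configuration parameter'' --- is not how these arguments run: the parameter space is a compactified interval (more precisely, a family over $\mathbb{Z}/p$-equivariant approximations of $B(\mathbb{Z}/p)$), and the identity is extracted from a boundary/cobordism argument on the parametrized moduli space, not from localization. Second, and more seriously, the identification of the $\lambda\to 0$ limit with the operator composition $\Sigma^T_x\circ\Sigma^T_{x'}$ is exactly where the work lies: the composition requires extending $\Sigma^T_x$ by $k[\![t,\theta]\!]$-linearity over the output of $\Sigma^T_{x'}$, so the degenerate configuration must carry a \emph{single} $\mathbb{Z}/p$-action acting compatibly on both bubbles, and one must track how the two sets of equivariant parameters are identified (this is also where the sign $(-1)^{p(p-1)|x||x'|/2}$ is actually produced, not merely from ``permuting the marked points''). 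A TQFT-type gluing formula does not come for free in the $\mathbb{Z}/p$-Borel-equivariant setting. As written, (v) is a plausible strategy statement rather than a proof; the honest route for this paper's purposes is to cite the quantum Cartan relation from the references, which is what the authors do.
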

Property (ii) is just Frobenius-linearity. Property (iii) claims that quantum Steenrod operators are \emph{equivariant} deformations of the $p$-th power operation in quantum cohomology, and (iv) claims that the quantum Steenrod operators are \emph{quantum} deformations of the Steenrod operations in mod $p$ cohomology. Property (v) is called the \emph{quantum Cartan relation}, which shows that quantum Steenrod operators $\Sigma_x^T$ collectively define an \emph{algebra action} from the (Frobenius-twisted) mod $p$ ($T$-equivariant) quantum cohomology algebra. In particular, the actions of $\Sigma_x^T$ and $\Sigma_{x'}^T$ graded commute for any $x, x'$ by the graded commutativity of $\star_T$.

The main result of \cite{Lee23b} is the identification of quantum Steenrod operators on degree $2$ classes for conical Hamiltonian symplectic resolutions with the \emph{$p$-curvature} of the quantum connection. Recall from \cref{sssec:eq-kah-dmodules} that the quantum connection is a $k[\![t, \theta]\!] \otimes H^*_{T \times \mathbb{G}_m}(\mathrm{pt})$-linear operator
\begin{equation}
    \nabla_x := t \partial_{\bar{x}} + x \ \star_T : H^*_{T \times \mathbb{G}_m, loc} (X; \hat{\Lambda}) [\![{t, \theta}]\!] \to H^*_{T \times \mathbb{G}_m, loc} (X; \hat{\Lambda}) [\![{t, \theta}]\!]
\end{equation}
where $\partial_{\bar{x}}$ acts on $z^\alpha \in \hat{\Lambda}$ by $\partial_{\bar{x}} z^\alpha = \langle \alpha, \bar{x} \rangle z^\alpha$. Here, $t$ is the equivariant parameter for the $\mu_p(\mathbb{C})$-action on the \emph{source} curve for holomorphic maps $u : \mathbb{P}^1 \to X$ counted for defining the quantum connection, which we later identify with the equivariant parameter for the $\mathbb{G}_m(\mathbb{C})$-action on the \emph{target} $X$ (see \cref{rem:hbar-vs-t}).

\begin{rem}
    The $\mu_p$-action on the domain $\mathbb{P}^1$ can be viewed as a discretized loop rotation action, in the sense that we restrict the $\mathbb{G}_m(\mathbb{C})$-action on $\mathbb{P}^1$ which fixes $z_0$ and $z_\infty$ to the action of its subgroup $\mu_p \subset \mathbb{G}_m(\mathbb{C})$ so that it preserves the set of marked points. This perspective is also useful for understanding the Frobenius center of quantized Coulomb branches.
\end{rem}

\begin{thm}[{\cite[Theorem 1.2]{Lee23b}}]\label{thm:qst-is-pcurv}
    Fix a conical Hamiltonian symplectic resolution $X$ with isolated $T$-fixed points. Then for almost all $p$ and $x \in H^2_T(X;\mathbb{F}_p)$, we have
    \begin{equation}
        \Sigma_x^T = \nabla_x^p - t^{p-1} \nabla_x + N
    \end{equation}
    where $N$ is a nilpotent operator.
\end{thm}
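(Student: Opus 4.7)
The plan is to set
\begin{equation*}
D := \Sigma_x^T - \bigl(\nabla_x^p - t^{p-1}\nabla_x\bigr)
\end{equation*}
and prove $D$ is nilpotent by establishing three facts: (i) $D$ commutes with every $\nabla_y$ for $y \in H^2_{T \times \mathbb{G}_m}(X;\mathbb{F}_p)$; (ii) $D$ vanishes at the classical limit $z = 0$; and (iii) these two conditions, combined with the isolated-fixed-points hypothesis and equivariant localization, force $D$ to be nilpotent.

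The first step is horizontality. For the $p$-curvature side, the operator $\psi_x := \nabla_x^p - t^{p-1}\nabla_x$ is $\hat\Lambda$-linear by the Artin--Schreier identity $(x\partial)^p - t^{p-1}(x\partial) = \prod_{i=0}^{p-1}(x\partial - it)$ recorded in the introduction, applied to the $t$-connection $\nabla_y = t\partial_{\bar y} + y \star_T$; flatness of $\nabla$ then gives $[\psi_x, \nabla_y]=0$. For the Steenrod side, the commutation $[\Sigma_x^T, \nabla_y] = 0$ is the covariant constancy of quantum Steenrod operators with respect to the quantum connection, established in the non-equivariant case by \cite{seidel-wilkins} and upgraded to the $T$-equivariant setting as the main technical result of \cite{Lee23b}. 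I would invoke this as a black box; it is the main geometric input of the proof.

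Next I would compare both operators at $z=0$. By Property (iv) of \cref{prop:qst-properties}, $\Sigma_x^T|_{z=0} = \mathrm{St}^T(x) \smile_T$, and the classical computation of the total mod-$p$ Steenrod power on a degree-two class gives $\mathrm{St}^T(x) = x^p - t^{p-1} x$, precisely the Artin--Schreier expression from the introduction applied inside equivariant cohomology. On the $p$-curvature side, $\partial_{\bar x} z^\alpha = \langle \alpha, \bar x \rangle z^\alpha$ vanishes at $z = 0$, so $\nabla_x|_{z=0}$ reduces to equivariant cup product with $x$, and $(\nabla_x^p - t^{p-1} \nabla_x)|_{z=0} = (x^p - t^{p-1}x)\smile_T$. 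Hence $D|_{z=0} = 0$. Finally, $D$ is an $\hat\Lambda$-linear element of the centralizer of the quantum connection acting on $H^*_{T \times \mathbb{G}_m, \loc}(X) \otimes \hat\Lambda$. The isolated-fixed-point hypothesis, combined with Atiyah--Bott localization, yields a fixed-point basis in which, for generic $x \in H^2_T$, the classical cup product has pairwise distinct eigenvalues --- this is where the ``almost all $p$'' qualifier enters, since one must exclude the finitely many primes dividing differences of $T$-weights at the fixed points. Consequently the quantum connection is generically semisimple near $z=0$, and every element of its centralizer admits a Jordan decomposition into commuting semisimple and nilpotent parts. Since the semisimple part is determined by its eigenvalues, and $D|_{z=0}=0$ forces these eigenvalues to vanish identically, the semisimple part of $D$ is zero, so $D$ is nilpotent as claimed.

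The main obstacle is step (i) on the Steenrod side: the covariant constancy of $\Sigma_x^T$ with respect to the equivariant quantum connection is not formal, and its proof requires a cobordism argument in $\mathbb{Z}/p$-equivariant moduli spaces of genus-zero maps together with a careful analysis of the strata where the marked points collide on the axis of $\mathbb{Z}/p$-rotation. A secondary technical point is the generic semisimplicity input to step (iii); for Springer resolutions and hypertoric varieties one can verify it by explicit computation with the fixed-point basis, which is why these are the examples where one expects a complete proof of \cref{thm:qst-is-pcurv}.
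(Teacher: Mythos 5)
First, a point of orientation: the paper does not prove this statement at all --- it is imported verbatim from \cite{Lee23b} and used as a black box (the paper only proves \cref{thm:qst-is-pcurv-onthenose}, which upgrades $N$ to $0$ under an additional semisimplicity hypothesis). So your proposal has to be judged as a reconstruction of the argument of \cite{Lee23b}. Your steps (i) and (ii) are correct and are indeed the standard inputs: covariant constancy of both operators (via \cref{thm:seidel-wilkins} and flatness of $\nabla$), and the matching of classical limits via \cref{prop:qst-properties}(iv) together with $\mathrm{St}^T(x) = x^p - t^{p-1}x$ for a degree-$2$ class with integral lift. The gap is in step (iii).

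Two problems there. First, you assume that isolated fixed points imply that for generic $x \in H^2_T(X)$ the classical cup product has simple spectrum in the fixed-point basis. That requires restriction to $X^T$ to separate the fixed points, which is exactly the extra ``jointly simple spectrum'' hypothesis of \cref{thm:qst-is-pcurv-onthenose}, not a consequence of the theorem's hypotheses; if your argument worked as written it would prove the corollary outright (giving $N = 0$) and the theorem/corollary split the paper maintains would be pointless. Second, and more seriously, even granting semisimplicity the final deduction does not close: covariant constancy of $D$ only forces its eigenvalues $d_i$ to satisfy $t\,\partial_{\bar y} d_i = 0$ for all $y$, and in characteristic $p$ the kernel of these derivations on $\hat\Lambda$ is the subring generated by the $z^{p\alpha}$, not the constants. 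So the $d_i$ are in general nonconstant functions of $z^{p\alpha}$, and $d_i|_{z=0} = 0$ does not propagate to $d_i \equiv 0$; this is precisely the characteristic-$p$ failure of ``a flat section is determined by its initial condition'' that the whole $p$-curvature formalism is built around. The proof in \cite{Lee23b} does not compare eigenvalues globally: it uses the isolated fixed points to localize, and shows that with respect to a suitable order on $X^T$ both $\Sigma_x^T$ and $\nabla_x^p - t^{p-1}\nabla_x$ are triangular with the same diagonal entries (the Artin--Schreier expressions attached to the restrictions $x|_F$), so that their difference is strictly triangular and hence nilpotent --- the off-diagonal terms are not controlled, which is exactly why a nilpotent error survives and why removing it later genuinely needs the semisimplicity input.
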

The right hand side expression, $\nabla_x^p - t^{p-1}\nabla_x$, is the operator known as the \emph{$p$-curvature} of the quantum connection. Although the connection $\nabla_x$ \emph{is not} $\hat{\Lambda}$-linear, the $p$-curvature $\nabla_x^p - t^{p-1} \nabla_x$ remarkably \emph{is} $\hat{\Lambda}$-linear.

Conjecturally the nilpotent error always vanishes, that is, the quantum Steenrod operators (on degree $2$ classes) are expected to be always equal to the $p$-curvature of the quantum connection. Current proof for this conjecture assumes generic semisimplicity, as follows. Recall that a collection of commuting linear operators $\{F_i\}$ (with $[F_i, F_j] = 0$) acting on a vector space $V$ over a field $K$ have \emph{jointly simple spectrum} if the associated simultaneous eigendecomposition of $V \otimes_K \overline{K}$ is into $1$-dimensional subspaces.

\begin{cor}[{\cite[Corollary 5.2]{Lee23b}}]\label{thm:qst-is-pcurv-onthenose}
    Suppose the quantum multiplication operators $y  \ \star_T$ associated to degree $2$ classes $y \in H^2_T(X;k)$ have jointly simple spectrum. Then for $x \in H^2_{T}(X;\mathbb{F}_p)$,
        \begin{equation}
        \Sigma_x^T = \nabla_x^p - t^{p-1} \nabla_x.
    \end{equation}
\end{cor}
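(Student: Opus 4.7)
The plan is to bootstrap \cref{thm:qst-is-pcurv}, which provides the decomposition $\Sigma_x^T = \nabla_x^p - t^{p-1}\nabla_x + N$ for some nilpotent operator $N$, and to show that the jointly simple spectrum hypothesis forces $N = 0$.

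First, I would observe that $N$ commutes with $\nabla_y$ for every $y \in H^2_T(X;k)$. The quantum Steenrod operator $\Sigma_x^T$ does so by the covariant constancy recorded in \cref{cor:qst-acts-on-Dmod}, while the $p$-curvature $\nabla_x^p - t^{p-1}\nabla_x$ does so by the classical identity that the $p$-curvature of a flat connection commutes with the connection itself in characteristic $p$. Since $[N, \nabla_y]=0$ passes to $[N, \nabla_y^p - t^{p-1}\nabla_y]=0$ by iterating and taking scalar multiples, $N$ in fact commutes with every $p$-curvature operator $\nabla_y^p - t^{p-1}\nabla_y$.

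Next, I would evaluate the $p$-curvature at the classical limit $t = 0$. At this value, $\nabla_y|_{t=0} = y \star_T$, and every term in the noncommutative expansion of $\nabla_y^p = (t\partial_{\bar y} + y\star_T)^p$ other than the pure $(y\star_T)^p$ contains a positive power of $t$, so $(\nabla_y^p - t^{p-1}\nabla_y)|_{t=0} = (y\star_T)^p$. The jointly simple spectrum hypothesis, combined with injectivity of the Frobenius on the perfect field $\bar k$, implies that the family $\{(y\star_T)^p : y \in H^2_T(X;k)\}$ itself has jointly simple spectrum, and in particular one may choose a generic $y_0$ for which $(y_0 \star_T)^p$ has a full eigenbasis with distinct eigenvalues.

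Finally, I would promote this classical-limit spectral structure to $k[\![t,\theta]\!] \otimes \mathrm{Frac}(\hat\Lambda)$-coefficients via Hensel's lemma (after localizing in the Novikov variables so that $(y_0 \star_T)^p$ is diagonalizable over $\mathrm{Frac}(\hat\Lambda)$). In the resulting simultaneous eigenbasis, the centralizer of $\nabla_{y_0}^p - t^{p-1}\nabla_{y_0}$ is étale, so the nilpotent element $N$ of this centralizer must vanish on the localization. Torsion-freeness of the quantum $D$-module in the Novikov and equivariant variables then promotes this to $N = 0$ identically, yielding $\Sigma_x^T = \nabla_x^p - t^{p-1}\nabla_x$.

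The main obstacle is the Hensel-lifting step in the last paragraph: the simple-spectrum structure is initially visible only at the classical limit, and promoting it to an actual diagonalization over $k[\![t,\theta]\!] \otimes \mathrm{Frac}(\hat\Lambda)$ requires verifying that the eigenvalue decomposition lifts without obstruction. The residue-field reduction has distinct eigenvalues, hence separable minimal polynomial, so Hensel is applicable in principle, but the bookkeeping needed to check that the étale commutant structure is preserved simultaneously for the $t,\theta$-expansion and the Novikov localization is the delicate point.
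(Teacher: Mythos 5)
Your proposal is correct and follows essentially the same route as the paper: start from the nilpotent-error decomposition of \cref{thm:qst-is-pcurv}, observe that $N$ commutes with every $p$-curvature operator $F_y = \nabla_y^p - t^{p-1}\nabla_y$ (via \cref{thm:seidel-wilkins} and flatness of the connection), note $F_y|_{t=0} = (y\star_T)^p$, and conclude from the jointly simple spectrum hypothesis that a nilpotent element of the (diagonalizable, hence reduced) centralizer must vanish. The only differences are cosmetic: the paper leaves the deformation step implicit where you invoke Hensel's lemma explicitly, and the covariant constancy you attribute to \cref{cor:qst-acts-on-Dmod} is really the content of \cref{thm:seidel-wilkins}.
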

\begin{proof}
Since $\Sigma_x^T - F_x = N$ is nilpotent by \cref{thm:qst-is-pcurv}, it suffices to show that $\Sigma_x^T$ and $F_x$ are simultaneously diagonalizable. 

    By \cref{thm:seidel-wilkins} below and the well-known flatness of $\nabla_x$, both $\Sigma_x^T$ and $F_x:= \nabla_x^p - t^{p-1} \nabla_x$ commute with each other, and also with any operator of the form $F_y := \nabla_y^p - t^{p-1} \nabla_y$ where $\nabla_y = t \partial_{\bar{y}} + y \ \star_T$ is the quantum connection for $y \in H^2_T(X;k)$.

    Hence, it suffices that the collection of operators $\{F_y\}$ for $y \in H^2_T(X;k)$ have jointly simple spectrum: since both $\Sigma_x^T$ and $F_x$ commute with $\{F_y\}$, they must be simultaneously diagonalizable in the corresponding eigenbasis of $\{F_y\}$.

    Now note that $F_y|_{t = 0} = (y \ \star_T)^p$, and therefore $\{y \ \star_T \}$ having jointly simple spectrum implies the same for $\{F_y\}$. This is our assumption, which gives the desired result.
\end{proof}

The following theorem establishes the covariant constancy of quantum Steenrod operators, which is also used in the proof of \cref{thm:qst-is-pcurv}.

\begin{thm}[{\cite[Theorem 1.4]{seidel-wilkins}}]\label{thm:seidel-wilkins}
    The quantum Steenrod operator $\Sigma_x^T$ commutes with $\nabla_{x'}$.
\end{thm}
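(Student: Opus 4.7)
The plan is to establish the identity $[\Sigma_x^T, \nabla_{x'}] = 0$ via a parametrized moduli problem with one additional free marked point, in the spirit of the topological recursion relations for equivariant Gromov--Witten invariants. I would work throughout in the $(T \times \mu_p)$-equivariant setting, where $\mu_p$ acts on the source curve by rotations preserving the marked points.

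First I would introduce the enlarged moduli space $\overline{\mathcal{M}}_\alpha^+$ of $\mu_p$-equivariant genus zero stable maps carrying the $(p+2)$ symmetric marked points $z_0, z_1, \dots, z_p, z_\infty$ as before, together with an additional free marked point $z_\star \in C$ that varies along the source. The $\mu_p$-action by source rotation acts on $\overline{\mathcal{M}}_\alpha^+$ with evaluation map
\begin{equation*}
    \mathrm{ev}^+ : \overline{\mathcal{M}}_\alpha^+ \longrightarrow X \times X^p \times X \times X,
\end{equation*}
and the classes $x_0, x_{eq}, x_\infty, x'$ can be paired against $\mathrm{ev}^{+*}$ to produce a $(T \times \mu_p)$-equivariant correlator. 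I would then compute the equivariant pushforward of this correlator to a point in two distinct ways that correspond to the two sides of the commutator.

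The key comparison is between the degeneration strata of $\overline{\mathcal{M}}_\alpha^+$ as $z_\star$ varies. The boundary where $z_\star$ collides with one of the two $\mu_p$-fixed marked points $z_0$ or $z_\infty$ produces, via the $\mu_p$-equivariant splitting axiom, the two compositions $\Sigma_x^T \circ (x' \star_T -)$ and $(x' \star_T -) \circ \Sigma_x^T$ respectively, where the quantum product factor comes from the bubble component carrying $z_\star$ together with $z_0$ or $z_\infty$. On the other hand, the interior contribution, obtained by integrating out $z_\star$ against the equivariant volume form of $C \cong \mathbb{P}^1$ with its $\mu_p$-action, produces via the divisor/fundamental class relation a term proportional to $t \partial_{\bar{x'}} \Sigma_x^T$, with the factor of $t$ originating from the equivariant Euler class of the normal bundle to the $\mu_p$-fixed loci on $C$. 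Finally, the contributions from $z_\star$ approaching the $\mu_p$-orbit $\{z_1, \dots, z_p\}$ are grouped $\mu_p$-equivariantly; after dividing by the order of the orbit, these should cancel in pairs or be absorbed into the divisor term, as in the classical TRR proof.

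The main obstacle is twofold. First, the $\mu_p$-equivariant virtual fundamental cycle on $\overline{\mathcal{M}}_\alpha^+$ is not algebro-geometrically available in positive characteristic, so the construction must be carried out via the symplectic framework of \cite{seidel-wilkins} (and its $T$-equivariant refinement in \cite{Lee23a, Lee23b}), using $(T \times \mu_p)$-equivariant virtual perturbations. Second, one must carefully analyze the contributions from strata where $z_\star$ collides with the non-fixed orbit $\{z_1, \dots, z_p\}$ or where a bubble carrying $z_\star$ destabilizes the $\mu_p$-symmetry, ensuring that the characteristic $p$ normalizations work out so that the only surviving boundary terms are those producing the two sides of the desired commutator identity.
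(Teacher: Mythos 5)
The paper does not prove this statement: it is imported verbatim as \cite[Theorem 1.4]{seidel-wilkins}, so there is no internal proof to compare against. Your sketch is, in outline, a faithful reconstruction of the Seidel--Wilkins strategy: one adds a free marked point $z_\star$ to the $\mu_p$-symmetric configuration, and the covariant-constancy identity arises from comparing the boundary strata where $z_\star$ collides with the two $\mu_p$-fixed points $z_0, z_\infty$ (yielding the compositions with $x' \star_T$) against the $t$-weighted interior/divisor contribution (yielding $t\,\partial_{\bar{x'}}\Sigma_x^T$, since $[\partial_{\bar{x'}}, \Sigma_x^T]$ extracts the $\langle \alpha, \bar{x'}\rangle z^\alpha$ coefficients). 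So the approach is the right one.

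That said, be aware that your sketch defers exactly the points where the actual proof lives. First, the factor of $t$ does not come from an ``equivariant volume form'' on $C$; it comes from the fact that the two $\mu_p$-fixed sections $z_0, z_\infty$ of the universal curve define equivariant classes in $H^*_{\mu_p}(\mathbb{P}^1)$ whose difference is controlled by the equivariant parameter $t$, and making this precise requires the localization computation in $H^*_{\mu_p}(\mathbb{P}^1;\mathbb{F}_p)$. Second, the strata where $z_\star$ meets the free orbit $\{z_1,\dots,z_p\}$ do not ``cancel in pairs'': they form a single free $\mu_p$-orbit of boundary components, and their equivariant contribution factors through the transfer map, which is what kills them in the relevant degrees --- this is a characteristic-$p$ argument, not a pairwise cancellation, and asserting it without the transfer mechanism leaves a genuine gap. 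Third, since no algebro-geometric equivariant virtual class exists here, the entire degeneration analysis must be run in the pseudo-holomorphic framework with $\mu_p$-equivariant perturbation data, where compatibility of the perturbations across the boundary strata is itself a nontrivial construction. None of this invalidates your outline, but a referee would rightly say that the proof consists of these three points.
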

\begin{cor}\label{cor:qst-acts-on-Dmod}
    Denote the $\mathbb{G}_m$-equivariant parameter generating $H^2_{\mathbb{G}_m}(\mathrm{pt})$ by $\hbar$. Then the specialization
    \begin{equation}
        \Sigma_x^T|_{\hbar = t} : H^*_{T \times \mathbb{G}_m, loc} (X; \hat{\Lambda}) [\![{\hbar, \theta}]\!] \to H^*_{T \times \mathbb{G}_m, loc} (X; \hat{\Lambda}) [\![{\hbar, \theta}]\!].
    \end{equation}
    where $t \in H^2_{\mu_p}(\mathrm{pt})$ is identified with $\hbar \in H^2_{\mathbb{G}_m}(\mathrm{pt})$, acts on the (specialized) quantum $D$-module $M_{Kah, reg}$ after localizing equivariant parameters.
\end{cor}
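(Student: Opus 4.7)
The content of the corollary is that after identifying the loop-rotation parameter $t$ with the conical/$\mathbb{G}_m$-equivariant parameter $\hbar$, the quantum Steenrod operator $\Sigma^T_x$ descends to a (central) endomorphism of the specialized quantum $D$-module $M_{Kah, reg}$. The plan is to deduce this directly from the properties of $\Sigma^T_x$ already collected, in particular its $\hat{\Lambda}[\![t,\theta]\!]$-linearity and its commutation with the quantum connection from \cref{thm:seidel-wilkins}.

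The first step is to recall that, by construction, $\Sigma^T_x$ is an endomorphism of $H^*_{T\times\mathbb{G}_m, loc}(X;\hat{\Lambda})[\![t,\theta]\!]$ that is linear over the coefficient ring $\hat{\Lambda}[\![t,\theta]\!]\otimes \mathrm{Frac}(H^*_{T\times \mathbb{G}_m}(\mathrm{pt}))$. In particular it commutes with multiplication by any $z^\alpha$ (and with $(1-z^\alpha)^{-1}$ after inverting Kähler roots), so it commutes with the action on $M_{Kah, reg}$ of the $z$-variables generating $R_{Kah, reg}$. After performing the specialization $t=\hbar$, this $\hat{\Lambda}$-linearity is preserved, and the resulting operator $\Sigma^T_x|_{\hbar=t}$ continues to commute with multiplication by $z^\alpha$.

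Second, by \cref{defn:Dmod-quantum}, the action of a class $x' \in H^2_{T\times \mathbb{G}_m}(X)$ on $M_{Kah, reg}$ is given (through the identification of the $z^\beta$-factor and the cup/quantum product part) by the operator $\hbar\,\partial_{\bar{x}'} + x' \star_T$, which is precisely $\nabla_{x'}|_{t = \hbar}$. \cref{thm:seidel-wilkins} states that $\Sigma^T_x$ commutes with $\nabla_{x'}$, and this commutation is preserved by the specialization $t = \hbar$. Combining this with the previous paragraph, $\Sigma^T_x|_{\hbar=t}$ commutes with every generator of $R_{Kah, reg}$ and therefore defines an $R_{Kah, reg}$-linear endomorphism of $M_{Kah, reg}$ (tensored up with $k[\![\theta]\!]$ if one wishes to record the $\theta$-dependence).

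The main thing to verify carefully is that the specialization is well-defined and preserves the subspace $M_{Kah, reg} \subseteq H^*_{T\times \mathbb{G}_m, loc}(X;\hat{\Lambda})[\![\hbar,\theta]\!]$ — i.e.\ that $\Sigma^T_x|_{\hbar=t}$ sends the unspecialized quantum cohomology (without needing to localize the $H^*_{T\times\mathbb{G}_m}(\mathrm{pt})$-coefficients in $\hat{\Lambda}$-unrelated ways) to itself, and that $(1-z^\alpha)^{-1}$-denominators remain controlled. This is essentially a bookkeeping exercise given that $\Sigma^T_x$ on degree $2$ classes is the $p$-curvature up to a nilpotent error by \cref{thm:qst-is-pcurv}; in general it follows from the structure of the correlators defining $\Sigma^T_x$ as $\hat{\Lambda}$-valued power series with no extra poles in the K\"ahler roots. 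No further curve-counting input is required beyond the already-cited properties.
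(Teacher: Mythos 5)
Your proposal is correct and follows essentially the same route as the paper: the paper's proof simply observes that the $R_{Kah}$-action of a degree-$2$ class on $M_{Kah,reg}$ is $\nabla_{x'}|_{\hbar=t}$ and invokes \cref{thm:seidel-wilkins} to conclude that $\Sigma_x^T|_{\hbar=t}$ commutes with it. Your additional remarks on $\hat{\Lambda}$-linearity (for the $z^\alpha$-generators) and on well-definedness of the specialization are reasonable elaborations of points the paper leaves implicit, not a different argument.
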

\begin{proof}
    In the definition of the quantum $D$-module (cf. \cref{defn:Dmod-quantum}), note that the defining action of $x' \in H^2_{T \times \mathbb{G}_m}(X)$ is exactly given by the specialization $\nabla_{x'}|_{\hbar = t}$. By \cref{thm:seidel-wilkins}, the specialization $\Sigma_{x}^T|_{\hbar = t}$ commutes with $\nabla_{x'}|_{\hbar = t}$, hence acts on the quantum $D$-module. Note that terms involving $\theta$ do not appear for $p \gg 0$, as rationally the cohomology groups of conical Hamiltonian symplectic resolutions are always concentrated in even degrees.
\end{proof}

\subsection{Frobenius-constant quantizations}\label{ssec:frobquant}
In characteristic $p$, the notion of a quantization as a flat $k[\![\hbar]\!]$-deformation still makes sense, but a striking new feature is that the resulting quantization may admit a large center. This observation motivates the definition of a Frobenius-constant quantization, the definition of which is due to \cite{bezrukavnikov-kaledin-quantp}. Here, we observe that Frobenius-constant quantizations give rise to an action of the (Frobenius-twisted) coordinate ring of the symplectic resolution on the $D$-module of twisted traces. Throughout our discussion, for any vector space, algebra, etc. $M$ over a field $k$ of characteristic $p$, we denote by 
\begin{equation*}
    M^{(1)} := M \otimes_k k
\end{equation*}
the Frobenius pullback with respect to the Frobenius map $\mathrm{Fr}: k \to k$ which maps $x$ to $x^p$.

\subsubsection{Frobenius-constant quantization operators}\label{sssec:frobquant-opers}
    First we recall the following definition:
    \begin{defn}[{\cite[Definition 1.4]{bezrukavnikov-kaledin-quantp}}]\label{defn:frob-const-quant}
        A \emph{Frobenius-constant quantization} of a Poisson variety $X$ over a field $k$ characteristic $p > 0$ is a quantization $A$ of $\mathcal{O}(X)$ and an algebra map
        \begin{equation}
            \Lambda : \mathcal{O}(X)^{(1)} \to Z (A)
        \end{equation}
        such that $\Lambda(f) \equiv f^p$ mod $\hbar^{p-1}$.
    \end{defn}
    Let us refer to the map $\Lambda$ in the data of a Frobenius-constant quantization as a \emph{quantized Frobenius}, as it lifts the usual, commutative Frobenius map $\mathcal{O}(X)^{(1)} \to \mathcal{O}(X)$ in characteristic $p$ to the quantization $A$.

    Now we fix a conical Hamiltonian symplectic resolution $X \to Y$, its universal deformation $\mathcal{X} \to \mathcal{Y}$ and the canonical quantization $\mathcal{A}$ of $\mathcal{O}(\mathcal{X})$. We assume that this quantization is equipped with the structure of a Frobenius-constant quantization, that is there exists a quantized Frobenius
    \begin{equation}
\Lambda : \mathcal{O}(\mathcal{X})^{(1)}_{\lambda} \to Z (\mathcal{A}_{p\lambda})
    \end{equation}
    where $\mathcal{O}(\mathcal{X})_\lambda$ denotes the $T$-weight $\lambda$ functions on the universal deformation and $\mathcal{A}_{p\lambda}$ is the $T$-weight $p \cdot \lambda$ part of the quantization. Such maps indeed exist for:

    \begin{itemize}
        \item Springer resolutions: classically the Frobenius center of asymptotic universal enveloping algebra (crystalline differential operators) gives the desired quantized Frobenius \cite[Proposition 3.5]{bezrukavnikov-kaledin-mckay} \cite[Section 1.2]{bezrukavnikov-mirkovic-rumynin},
        \item Coulomb branches for a gauge theory associated with $(G,V)$ (for abelian $G$, these recover hypertoric varieties), by the construction of \cite{lonergan}.
    \end{itemize}

    We use the map $\Lambda$ to define multiplication operators acting on the $D$-module of twisted traces as follows. Fix $\bar{x} \in \mathcal{O}(\mathcal{X})_0$, an element of the universal coordinate ring of torus weight $0$.

    \begin{prop}\label{prop:frob-acts-on-Dmod}
        Left multiplication by $\Lambda ( \bar{x})$ defines a well-defined, covariantly constant endomorphism of the $D$-module of twisted traces.
    \end{prop}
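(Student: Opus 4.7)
The statement should follow formally from two structural features packaged into the hypotheses: $\Lambda(\bar{x})$ is \emph{central} in $\mathcal{A}$ (by the definition of a Frobenius splitting) and it has $T$-weight zero, since $\bar{x} \in \mathcal{O}(\mathcal{X})_0$ implies $\Lambda(\bar{x}) \in \mathcal{A}_{p\cdot 0} = \mathcal{A}_0$. My plan is to verify well-definedness on the quotient defining $M_{eq}$ and then commutation with the generators of $R_{eq}$; in both steps centrality does all the work.

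\emph{Step 1 (well-definedness).} Since $\Lambda(\bar{x}) \in \mathcal{A}_0$, left multiplication preserves $k[z^\mu] \otimes \mathcal{A}_0$. To show it descends to the quotient by the trace relations, I fix $a \in \mathcal{A}_\lambda$ and $b \in \mathcal{A}_{-\lambda}$ and compute
\[
\Lambda(\bar{x})\cdot(1 \otimes ab - z^\lambda \otimes ba) = 1 \otimes \Lambda(\bar{x})ab - z^\lambda \otimes \Lambda(\bar{x})ba.
\]
Because $\Lambda(\bar{x})$ has weight zero, the element $a' := \Lambda(\bar{x})a$ again lies in $\mathcal{A}_\lambda$, and centrality gives $\Lambda(\bar{x})ba = b\Lambda(\bar{x})a = ba'$. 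Thus the expression equals $1 \otimes a'b - z^\lambda \otimes ba'$, which is a defining relation and hence vanishes in $M_{eq}$.

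\emph{Step 2 (covariant constancy).} I check that left multiplication by $\Lambda(\bar{x})$ commutes with the two classes of generators of $R_{eq}$, namely $z^\mu$ and $a \in \mathcal{A}_0^2$. Commutation with $z^\mu$ is immediate since this action only shifts the $z$-exponent. For $a \in \mathcal{A}_0^2$, both orders of composition applied to $z^\lambda \otimes b$ yield
\[
z^\lambda \otimes \Lambda(\bar{x})\bigl(a+\hbar\langle\lambda,\bar{a}\rangle\bigr)b = z^\lambda \otimes \bigl(a+\hbar\langle\lambda,\bar{a}\rangle\bigr)\Lambda(\bar{x})b,
\]
where the equality uses centrality of $\Lambda(\bar{x})$. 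The mixed-generator relation $a z^\mu = z^\mu(a+\hbar\langle\mu,\bar{a}\rangle)$ of $R_{eq}$ is automatically preserved once these two checks are done, since both sides of the relation act by the same $\lambda$-dependent shift on the $\mathcal{A}$-factor and $\Lambda(\bar{x})$ commutes with every element of $\mathcal{A}$.

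\emph{Main obstacle.} There is essentially no nontrivial obstacle: the proposition is a formal consequence of the centrality of $\Lambda(\bar{x})$ in $\mathcal{A}$ together with the fact that $\Lambda$ carries $T$-weight $\lambda$ to $T$-weight $p\lambda$, which for $\lambda = 0$ preserves the weight-zero subalgebra. The only minor point requiring care is confirming that the twisted $R_{eq}$-action, encoded in the shift $a \mapsto a + \hbar\langle\lambda,\bar{a}\rangle$, is compatible with left multiplication, and this is exactly where centrality is used. The content of the proposition is therefore a compatibility statement rather than a substantive construction, paving the way for the real work, which is identifying these endomorphisms with quantum Steenrod operators under the quantum Hikita isomorphism in the examples treated later.
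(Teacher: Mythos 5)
Your proof is correct and follows essentially the same route as the paper's: both the well-definedness check (absorbing $\Lambda(\bar{x})$ into one factor of the trace relation via centrality and weight-zero) and the covariant constancy computation (commuting $\Lambda(\bar{x})$ past $y + \hbar\langle\mu,\bar{y}\rangle$ using centrality) match the paper's argument, with your version merely being slightly more explicit about the $z^\mu$ generators and the weight bookkeeping.
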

\begin{proof}
    To show that this is a well-defined operation, we must check that the action factors through the relations
    \begin{equation}
         J := \{ ab - z^\lambda ba : a \in \mathcal{A}_\lambda, b \in \mathcal{A}_{-\lambda} \}
    \end{equation}
    in the $D$-module of twisted traces. Indeed, by centrality, it is immediate that for a fixed $x \in \mathcal{O}(\mathcal{X})_0$ we have
    \begin{equation}
        \Lambda(\bar{x})(ab - z^\lambda ba) = a (b\Lambda(\bar{x}))-z^\lambda (b\Lambda(\bar{x}))a \in J.
    \end{equation}
    The commutation relation with the action of the differential operators (covariant constancy) can be checked similarly. Given $y \in \mathcal{A}^2_0$, we have
    \begin{align}
    y \cdot \Lambda(\bar{x}) (z^\mu \otimes a) &= y \cdot (z^{\mu} \otimes \Lambda(\bar{x})a) 
    \\ &= z^{\mu} \otimes (\hbar \langle \mu, \bar{y} \rangle \Lambda(\bar{x}) a + y \Lambda(\bar{x}) a) \\
    &= z^{\mu} \otimes \Lambda(\bar{x})\left( \hbar \langle \mu, \bar{y} \rangle a + ya \right) \\
    &= \Lambda(\bar{x}) \big(y \cdot (z^\mu \otimes a) \big),
    \end{align}
    where the second equality uses the centrality of $\Lambda(\bar{x})$.
\end{proof}

Since the quantized Frobenius $\Lambda$ is by definition an algebra map, the collection of multiplication operators $\Lambda({\bar{x}})$ define an algebra action of $\mathcal{O}(\mathcal{X})_0$, i.e.
\begin{equation}
    \Lambda(\bar{x} \bar{y}) = \Lambda(\bar{x}) \circ \Lambda(\bar{y}).
\end{equation}
This is mirror to the quantum Cartan relation from \cref{prop:qst-properties} (v).

We can extend the action to $k[z^\mu : \mu \in \mathbb{Z}_{\ge 0} \Sigma_{eq, +} ]  \otimes \mathcal{O}(\mathcal{X})_0$ by letting $z^\mu$ act by left multiplication by $z^{p\mu} \otimes 1$. For this extended action, the following is an immediate consequence of the definition of $D$-module of twisted traces:

\begin{prop}\label{prop:frob-acts-on-Dmod-factors-through-B}
    The action by left multiplication of images of  $\Lambda$ on the $D$-module of twisted traces factors through
        \begin{equation}
            \mathscr{B} (\mathcal{O}(\mathcal{X})) := k[z^\mu : \mu \in \mathbb{Z}_{\ge 0} \Sigma_{eq, +} ] \otimes \mathcal{O}(\mathcal{X})_0 / \{ 1 \otimes \bar{a}\bar{b} - z^\lambda \otimes \bar{a}\bar{b} : \bar{a} \in \mathcal{O}(\mathcal{X})_\lambda, \bar{b} \in \mathcal{O}(\mathcal{X})_{-\lambda } \}.
        \end{equation}
\end{prop}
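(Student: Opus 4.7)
The plan is to verify directly that the defining relations of $\mathscr{B}(\mathcal{O}(\mathcal{X}))$ are respected by the action on the $D$-module of twisted traces $M_{eq}$. By \cref{prop:frob-acts-on-Dmod}, for any $\bar{x} \in \mathcal{O}(\mathcal{X})_0$ left multiplication by $\Lambda(\bar{x}) \in Z(\mathcal{A}_0)$ is already a well-defined, covariantly constant endomorphism of $M_{eq}$. Since $\Lambda$ is an algebra homomorphism, the collection of these endomorphisms assembles into an algebra action of $\mathcal{O}(\mathcal{X})_0$, and letting $z^\mu$ act by multiplication by $z^{p\mu}$ extends this to an action of $k[z^\mu : \mu \in \mathbb{Z}_{\ge 0}\Sigma_{eq,+}] \otimes \mathcal{O}(\mathcal{X})_0$. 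What remains is to check that the quotient relations $1 \otimes \bar{a}\bar{b} \sim z^\lambda \otimes \bar{a}\bar{b}$, for $\bar{a} \in \mathcal{O}(\mathcal{X})_\lambda$ and $\bar{b} \in \mathcal{O}(\mathcal{X})_{-\lambda}$, are killed by the action.

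The computation should be direct. Given such $\bar{a}, \bar{b}$, the Frobenius splitting places $\Lambda(\bar{a}) \in \mathcal{A}_{p\lambda}$ and $\Lambda(\bar{b}) \in \mathcal{A}_{-p\lambda}$, both central in $\mathcal{A}$. I would apply the defining relation of $M_{eq} = \mathscr{B}(\mathcal{A})$ with the choices $A := \Lambda(\bar{a}) \in \mathcal{A}_{p\lambda}$ and $B := \Lambda(\bar{b})\cdot c \in \mathcal{A}_{-p\lambda}$ for an arbitrary $c \in \mathcal{A}_0$, which gives
\begin{equation*}
    1 \otimes \Lambda(\bar{a})\Lambda(\bar{b}) c \;=\; z^{p\lambda} \otimes \Lambda(\bar{b}) c\, \Lambda(\bar{a}) \quad \text{in } M_{eq}.
\end{equation*}
The centrality of $\Lambda(\bar{a})$ rewrites the right-hand side as $z^{p\lambda} \otimes \Lambda(\bar{a})\Lambda(\bar{b}) c$. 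Using $\Lambda(\bar{a}\bar{b}) = \Lambda(\bar{a})\Lambda(\bar{b})$ and multiplying by an arbitrary $z^\mu$, this translates to
\begin{equation*}
    \Lambda(\bar{a}\bar{b}) \cdot (z^\mu \otimes c) \;=\; z^{p\lambda} \cdot \Lambda(\bar{a}\bar{b}) \cdot (z^\mu \otimes c)
\end{equation*}
on all basis-type elements of $M_{eq}$, which is precisely the identification between the actions of $1 \otimes \bar{a}\bar{b}$ and $z^\lambda \otimes \bar{a}\bar{b}$ demanded by the quotient.

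I do not anticipate serious obstacles. The only bookkeeping point is that when $\lambda \in \mathbb{Z}_{\ge 0}\Sigma_{eq,+}$, so that the relation $1 \otimes \bar{a}\bar{b} \sim z^\lambda \otimes \bar{a}\bar{b}$ is imposed in $\mathscr{B}(\mathcal{O}(\mathcal{X}))$, one also has $p\lambda \in \mathbb{Z}_{\ge 0}\Sigma_{eq,+}$, so the matching relation in $M_{eq}$ is available; the case where $-\lambda$ is positive reduces to this one by swapping $\bar{a}$ and $\bar{b}$. In essence, the argument is just an elaboration of \cref{prop:frob-acts-on-Dmod}, with centrality of $\Lambda(\bar{a})$ playing exactly the role that centrality of $\Lambda(\bar{x})$ did there.
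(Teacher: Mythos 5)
Your argument is correct and is essentially the paper's own proof: the paper likewise observes that the relation element $1 \otimes \bar a\bar b - z^\lambda \otimes \bar b\bar a$ acts by $\Lambda(\bar a)\Lambda(\bar b) - z^{p\lambda}\Lambda(\bar b)\Lambda(\bar a)$, which vanishes on $M_{eq}$ because $\Lambda(\bar a) \in \mathcal{A}_{p\lambda}$ and $\Lambda(\bar b) \in \mathcal{A}_{-p\lambda}$. Your version just spells out the absorption of the arbitrary element $c$ and the use of centrality, which the paper leaves implicit.
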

\begin{proof}
    The elements in the relation $1 \otimes xy - z^\lambda \otimes yx$ act by $\Lambda(x)\Lambda(y) - z^{p \lambda} \Lambda(y) \Lambda(x)$; this is zero in the $D$-module of twisted traces, since $\Lambda(x) \in \mathcal{A}_{p\lambda}$ and $\Lambda(y) \in \mathcal{A}_{-p\lambda}$.
\end{proof}

\begin{rem}
    We consider the algebra $\mathscr{B}(\mathcal{O}(\mathcal{X}))$ as the mirror to the equivariant quantum cohomology algebra $H^*_{T^!}(X^! ; \Lambda')$; the isomorphism is induced by passing to $\hbar = 0$ in the isomorphism $M_{eq, reg} \cong M^!_{Kah, reg}$. Note that both left multiplication by images of $\Lambda$ and the quantum Steenrod operations $\Sigma_{x}^{T^!}$ define an \emph{algebra} action. In particular, the \emph{quantum} multiplication in quantum cohomology is mirror to \emph{ordinary} multiplication.
\end{rem}

\subsubsection{Lonergan's construction of quantized Frobenius for Coulomb branches}\label{sssec:frobquant-lonergan}
There is a general construction of the Frobenius-constant quantization of Coulomb branches for a gauge theory $(G,V)$ due to Lonergan, cf. \cite{lonergan}. We briefly sketch this construction, as (i) we later use the description to compute the quantized Frobenius for hypertoric varieties, and (ii) we would like to highlight the similarity of this construction and the construction of quantum Steenrod operations. For a complete description, the reader is referred to the original paper \cite{lonergan}.

The starting point is to consider the following generalization of the affine Grassmannian. Let $X$ be a curve, $S' \subseteq S$ a finite set and a subset. Denote by $\Delta_S(x)$ the union of formal disc neighborhoods of $x_s \in X$ for all $s \in S$. Denote by $\Delta_{S}^{S'}(x)$ the complement of the graphs of $\{ x_{s} \}_{s \in S'}$ in $\Delta_S(x)$, i.e. introducing punctures at points indexed by $s \in S'$.

\begin{defn}
    Let $G$ be a reductive group. The \emph{generalized Beilinson--Drinfeld Grassmannian} is the space $\mathrm{Gr}_{G | S, S'}$ given by isomorphism classes of triples $\{(x, \mathcal{P}, \phi)\}$ of $x \in X^S$, $\mathcal{P}$ a principal $G$-bundle over $\Delta_S(x)$, and $\phi$ a trivialization of $\mathcal{P}$ over $\Delta_S^{S'}(x)$.
\end{defn}
The construction of such a space as a fpqc-quotient of an inductive limit of ``homogeneous'' spaces (generalizing the description of the affine Grassmannian as a $G\brK$-homogeneous space) is given in \cite[Section 3.5]{lonergan}, where it is denoted $G_S^{S'}/G_S$.

\begin{rem}[{\cite[Remark 3.11]{lonergan}}]
    The case with $X = \mathbb{G}_a$, $|S| = 1$, $S= S' = \{0\}$ corresponds to the original affine Grassmannian. The case $S = S'$ is known as the (usual) Beilinson--Drinfeld Grassmannian $\mathrm{Gr}_{G, S}$ on $|S|$ points, whose fibers over $X^S$ are given by product of $|S|$ copies of $\mathrm{Gr}_G$.
\end{rem}

Similarly, we may introduce the moduli spaces $\mathcal{T}_{G, V | S, S'}$ with its projection $\pi: \mathcal{T}_{G,V|S,S'} \to \mathrm{Gr}_{G|S,S'}$ and the action map $ \mu_{S,S'}: \mathcal{T}_{G,V|S, S'} \to V_{S, S'}$. Here, $\mathcal{T}_{G, V | S, S'}$ parametrizes isomorphism classes of quadruples $(x, \mathcal{P}, \phi, v)$ where $x \in X^S$, $\mathcal{P}$ is a principal $G$-bundle over $\Delta_S(x)$, $\phi$ is a trivialization of $\mathcal{P}$ over $\Delta_S^{S'}(x)$, and $v$ is a section of the associated bundle $\mathcal{P} \times_{G} V$. Then, $\pi$ is the projection map defined by forgetting the section. The space $V_{S, S'}$ can be thought of as the product bundle over $\Delta_S^{S'}(x)$, which admits an action by the group $G_S^{S'}$. The action map $\mu_{S, S'}$ is then defined analogously to $\mu: \mathcal{T}_{G, V}  \to V(\!(z)\!)$, using the action by $G_{S}^{S'}$ on sections.

Correspondingly, $\mathcal{R}_{G, V|S, S'}$ can be described by taking the fiber product of $\mu_{S,S'}$ with $V_S \to V_{S, S'}$, where $V_S$ can be interpreted as the sections of the product bundle which have no poles. We refer the reader to \cite[\S 3.5, 3.6]{lonergan} for the accurate constructions.

As a special case, the Coulomb branches are constructed as convolution algebras of $\mathcal{R}_{\{1\},\{1\}}$ with $G_{\{1\}} = G\brO$-action. The idea due to Beilinson--Drinfeld, which is employed in Lonergan's construction of the Frobenius-constant quantization, is that this convolution algebra structure can be expressed as a composition of a (manifestly commutative) multiplication map and a specialization map (cf. \cite[Remark 3.27]{lonergan}).

Lonergan's construction of the quantized Frobenius uses the generalized Beilinson--Drinfeld Grassmannian corresponding to $X = \mathbb{G}_a = \mathrm{Spec} \ \mathbb{C}[t]$. Note that there is a projection map $\pi: X \to Y = X \gitq_0 \mu_p = \mathrm{Spec} \ \mathbb{C} [t^p]$ which is a quotient by $\mu_p$ action on $X$ given by a character $\chi$. By considering the generalized Beilinson--Drinfeld Grassmannian with $p$ points supported on the $\mu_p$-equivariant embedding $X \to X^p$ given by
$$
t \mapsto (\zeta \cdot t, \dots, \zeta^{p-1} \cdot t)
$$
where $\zeta$ denotes the generator of the action corresponding to $\chi$ and $X^p$ is equipped with the cyclic permutation action, and taking $\mu_p$-quotients, we obtain the following moduli space fibering over $Y = X \gitq_0 \mu_p$. For a $\mathbb{C}$-point $y$ of $Y$, denote by
\begin{equation}
    \pi^* \Delta_{S}^{S'}(y) := \mathrm{Spec} \left( \mathbb{C}[t] \left[\!\left[ \prod_{s \in S} (t^p - y_s) \right]\!\right]\left[ \prod_{s \in S'} (t^p - y_s)^{-1} \right] \right).
\end{equation}

\begin{defn}
    The \emph{moduli of triples in symmetric configuration} is $\mathcal{R}_{(p)}^{(p)}$ which parametrizes tuples $(y, \mathcal{P}, \phi, v)$ where $y \in Y$, $\mathcal{P}$ is a principal $G$-bundle over $\pi^* \Delta_1(y)$, $\phi$ is a trivialization over $\pi^* \Delta_1^1 (y)$, and $v$ is a section of the associated bundle with fibers $V\brO$ (i.e. section such that $\phi(v)$ extends over $\pi^*\Delta_1 (y)$).
\end{defn}

There is a natural action of $G_{(p)} = \{ (y, g) : y \in Y, g : \pi^* \Delta_1(y) \to G \}$ acting on $\mathcal{R}_{(p)}^{(p)}$ where we multiply the trivialization. (This is the analog of usual $G\brO$-action on $\mathcal{R}$). This can be extended to an action of $G_{(p)} \rtimes \mathbb{C}^*$ where $\mathbb{C}^*$ is the usual loop rotation action.

The moduli space $\mathcal{R}_{(p)}^{(p)}$ naturally fibers over $Y$. Correspondingly, there is a diagram
\begin{equation}
    \begin{tikzcd}
        \mathcal{R} \rar \dar & \mathcal{R}_{(p)}^{(p)} \dar & \lar \mathcal{R}^p \dar \\ 0 \rar & Y = \mathbb{G}_a & \lar 1
    \end{tikzcd}
\end{equation}
where the $G_{(p)} \rtimes \mathbb{C}^*$-action restricts to the usual $G\brO \rtimes \mathbb{C}^*$-action on the zero fiber $\mathcal{R}$ and $G \brO^p \rtimes \mu_p$-action on the generic fiber $\mathcal{R}^p$ (where $\mu_p$ acts by cyclic permutation of factors).

Lonergan uses this geometry to construct the Frobenius-constant quantization. Namely, one gets
\begin{equation}
    \Lambda_{Coulomb} : \mathcal{O}(X) := H_*^{G \brO} (\mathcal{R}) \to H_*^{G \brO^p \rtimes \mu_p} (\mathcal{R}^p) \to H_*^{G_{(p)} \rtimes \mathbb{C}^*}((\mathcal{R}_{(p)}^{(p)})^* ) \to H_*^{G \brO \rtimes \mu_p} (\mathcal{R}) = A
\end{equation}
by the composition of the following maps.

The first map is the Steenrod power map, defined for Borel--Moore homology using Lonergan's reinterpretation of the Steenrod operations in equivariant derived constructible categories. The second map is the descent isomorphism, induced by identifying the nonzero fiber over $1 \in Y$ as the slice of the $\mathbb{C}^*$-action on $\mathcal{R}_{(p)}^{(p)}$ over $Y^* = Y - \{0\}$).  The last map is the specialization map in Borel--Moore homology. 

\begin{thm}[{\cite[Theorem 3.28]{lonergan}}]\label{thm:lonergan-quantization}
    For $p > 2$, there exists a Frobenius-constant quantization of BFN Coulomb branch,
    \begin{equation}
        \Lambda_{Coulomb}: \mathcal{O}(\mathcal{M}_C(G, V))^{(1)} \to Z(\mathcal{A}_{\hbar}(G,V)),
    \end{equation}
    so that $\Lambda_{Coulomb}$ is a central algebra map.
\end{thm}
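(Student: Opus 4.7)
The strategy is to verify that the composition $\Lambda_{Coulomb}$ displayed above satisfies the three defining properties of \cref{defn:frob-const-quant}: (i) it is an algebra map, (ii) its image lies in $Z(\mathcal{A}_{\hbar}(G,V))$, and (iii) $\Lambda_{Coulomb}(f) \equiv f^p \pmod{\hbar^{p-1}}$. The construction itself---Steenrod power, followed by descent across the symmetric configuration family over $Y = \mathbb{G}_a$, followed by specialization to the origin---already packages the geometric content; what remains is to extract the algebraic consequences from the Beilinson--Drinfeld factorization inherent in $\mathcal{R}_{(p)}^{(p)}$.

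For the algebra map property, I would first verify multiplicativity of the Steenrod power in equivariant Borel--Moore homology with respect to external products, and then observe that the specialization map in the family $\mathcal{R}_{(p)}^{(p)} \to Y$ intertwines the commutative external product on the generic fiber $\mathcal{R}^p$ with the convolution product on the zero fiber $\mathcal{R}$. This is precisely the Beilinson--Drinfeld factorization mechanism that realizes the non-commutative convolution algebra as a degeneration of the commutative external product, now promoted to the $\mu_p$-equivariant setting.

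The key step, and the one I expect to be the most delicate, is centrality. Because $\mathcal{A}_{\hbar}(G,V)$ is non-commutative, one must show that $\Lambda_{Coulomb}(f)$ commutes with \emph{every} class coming from $H_*^{G\brO \rtimes \mathbb{C}^*}(\mathcal{R})$. The factorization mechanism supplies this: classes pulled back to $\mathcal{R}_{(p)}^{(p)}$ and then specialized at $0$ commute with convolution, because on the generic fiber the Steenrod image is $\mu_p$-symmetric and therefore central for the external product, while the commutator of any two classes in the family can be expressed as a section of a constructible sheaf whose generic stalk vanishes---hence its specialization at $0$ also vanishes. One should reduce to a check on a generating set of $\mathcal{A}_\hbar(G,V)$ (for instance, the BFN generators living over the generic locus of the affine Grassmannian, where the geometry is explicit) to make this argument concrete.

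Finally, for the congruence $\Lambda_{Coulomb}(f) \equiv f^p \pmod{\hbar^{p-1}}$, one appeals to the universal model computation motivating the paper: for the degree $2$ generator $u$ of $H^*(B\mathbb{G}_m;\mathbb{F}_p)$, the total Steenrod operation satisfies $\mathrm{St}_p(u) = u^p - \hbar^{p-1} u$ with $\hbar$ the degree $2$ generator of $H^*(B\mu_p;\mathbb{F}_p)$. Identifying this $\hbar$ with the loop rotation parameter via $\mu_p \hookrightarrow \mathbb{C}^*$, the diagonal pullback $\mathcal{R} \hookrightarrow \mathcal{R}_{(p)}^{(p)}$ followed by specialization recovers the naive $p$-th power $f^p$ to leading order, with all corrections of order at least $\hbar^{p-1}$ in the $\hbar$-adic filtration. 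This yields the required congruence and completes the verification.
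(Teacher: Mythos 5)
You should first note that the paper does not actually prove this statement: it is imported verbatim from \cite[Theorem 3.28]{lonergan}, and the surrounding text in \cref{sssec:frobquant-lonergan} only sketches the construction of $\Lambda_{Coulomb}$ as the composite of a Steenrod power map, a descent isomorphism, and a specialization map. Your proposal is therefore a reconstruction of Lonergan's proof rather than of anything in this paper. The overall shape you describe --- multiplicativity of the Steenrod power, the Beilinson--Drinfeld degeneration from external product to convolution for the algebra-map property, and the unstability/degree constraints on Steenrod operations for the congruence $\Lambda_{Coulomb}(f) \equiv f^p \bmod \hbar^{p-1}$ --- does match the actual argument in \emph{loc.\ cit.}

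There is, however, a genuine gap in your centrality step. The family $\mathcal{R}_{(p)}^{(p)} \to Y$ that you invoke only carries the \emph{images} of $\Lambda_{Coulomb}$: its generic fiber is $\mathcal{R}^p$ with the $\mu_p$-equivariant external product, and an arbitrary element $a \in \mathcal{A}_{\hbar}(G,V) = H_*^{G\brO \rtimes \mathbb{C}^*}(\mathcal{R};k)$ does not live on that fiber. Observing that the Steenrod image is ``$\mu_p$-symmetric and therefore central for the external product'' at best shows that two images of $\Lambda_{Coulomb}$ commute with each other; it says nothing about the commutator $[\Lambda_{Coulomb}(f), a]$ for general $a$, which is what centrality in the noncommutative algebra $\mathcal{A}_{\hbar}(G,V)$ requires. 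The missing idea is a \emph{larger} generalized Beilinson--Drinfeld family whose configurations consist of the symmetric $p$-tuple together with one additional free point: when the free point is separated from the symmetric tuple the product $\Lambda_{Coulomb}(f)\cdot a$ is an external product (manifestly symmetric in the two factors), and when it collides with the tuple one recovers convolution. Only then is the commutator a global class on a family over an affine base, vanishing on a dense open, and one concludes that its value at the special point vanishes using torsion-freeness of the relevant equivariant Borel--Moore homology over the coordinate ring of the base --- a freeness input you assert (``generic stalk vanishes, hence its specialization at $0$ also vanishes'') but never supply. Without the mixed family the argument does not close. A smaller omission: you never indicate where the hypothesis $p>2$ enters; it is needed to control the signs and the odd-degree class $\theta \in H^1_{\mu_p}(\mathrm{pt})$ so that the total Steenrod power is genuinely multiplicative and lands in the even part.
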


\subsection{The mod $p$ Quantum Hikita conjecture}\label{ssec:hikita-quantum-p}

We are now ready to state the mod $p$ version of the quantum Hikita conjecture. As in the beginning of the section, fix $k$ an algebraically closed field of characteristic $p \gg 0$ (say $k = \bar{\mathbb{F}}_p$). 

Assume that there is a pair $X, X^!$ of symplectically dual conical Hamiltonian symplectic resolutions, equipped with actions of algebraic tori $T$ and $T^!$. 

Our interpretation of a symplectically dual pair $(X/k, X^!/\mathbb{C})$ is the following. We first assume that the pair $(X/\mathbb{C}, X^!/\mathbb{C})$ is symplectically dual in the usual sense (cf. \cref{rem:s-dual-is-empirical}). Then we assume there exists $R$, a finite localization of $\mathbb{Z}$, over which $X$ is actually defined such that $X/\mathbb{C}$ is pulled back from $R \to \mathbb{C}$. When our $X/k$ is obtained by pulling back $X/R$ under $R \to R/\mathfrak{m} \cong k$ where $\mathfrak{m}$ is a maximal ideal of  $R$, we say that $X/k$ is dual to $X^!/\mathbb{C}$. (In particular, the dual of $X^!$ is not unique, corresponding to the choice of coefficients for invariants of $X^!$).

This way, we assume that $X$ and the torus $T$ are defined over the ground field $k$, but assume that $X^!$ and $T^!$ are still both defined over $\mathbb{C}$.

We assume the existence of a preferred Frobenius-constant quantization structure for the canonical quantization $\mathcal{A}$ of $X$; see \cref{sssec:frobquant-opers}.

Consider the $D$-module of twisted traces $M_{eq, reg}$ for $X$, and the quantum $D$-module $M_{Kah, reg}^!$ for $X^!$ with \emph{$k$-linear coefficients}. In particular, under our assumptions, both $D$-modules are defined over $k$.

\begin{conj}\label{conj:hikita-quantum-p}
    Let $X/k$ and $X^!/\mathbb{C}$ be symplectically dual.
    Then there exist
\begin{itemize}
    \item a bijection of the roots $\Sigma_{eq,+} \cong \Sigma_{Kah, +}^!$, 
    \item a compatible isomorphism of the ring of differential operators $R_{eq} \cong R_{Kah}^!$,
    \item and a compatible isomorphism of the $D$-modules $M_{eq, reg} \cong M_{Kah, reg}^!$ over $k$, taking $1 \in \mathcal{A}_0^0$ to $1 \in H^0(X^!)$,
\end{itemize}
    extending the isomorphisms from Hikita--Nakajima conjecture. Moreover, for a fixed $a \in \mathscr{B}(\mathcal{O}(\mathcal{X})) \cong H^*_{T^!}(X^!;\Lambda')$, the isomorphism $M_{eq, reg} \cong M^!_{Kah, reg}$ intertwines the action of
    \begin{itemize}
        \item $\Lambda(a)$ on $M_{eq, reg}$ (\cref{prop:frob-acts-on-Dmod}, \cref{prop:frob-acts-on-Dmod-factors-through-B}) and
        \item $\Sigma_{a}^{T^!}|_{\hbar = t}$ on $M^!_{Kah, reg}$ (\cref{cor:qst-acts-on-Dmod}, \cref{prop:qst-properties}).
    \end{itemize}
\end{conj}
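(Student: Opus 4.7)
The plan is to prove \cref{conj:hikita-quantum-p} for the examples covered by \cref{thm:example}: dual Springer resolutions and Gale-dual hypertoric varieties. The first three bullets of the conjecture (identifying roots, rings of differential operators, and the $D$-module isomorphism $M_{eq,reg}\cong M^!_{Kah,reg}$) follow from the characteristic-zero quantum Hikita isomorphism of \cite[Theorems 6.13, 7.17]{KMP21} by a base-change argument. In both examples, all the structures involved (the universal deformation $\mathcal{X}$, the quantization $\mathcal{A}$, the quantum cohomology and the defining relations of both $D$-modules) admit integral models over a localization of $\mathbb{Z}$, so reducing mod $p$ for $p\gg 0$ preserves the isomorphism verbatim.

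The heart of the matter is the new content: matching multiplication by $\Lambda(a)$ on $M_{eq,reg}$ with $\Sigma^{T^!}_a|_{\hbar=t}$ on $M^!_{Kah,reg}$. The key reduction is to express both operators as Artin--Schreier (``$p$-curvature'') expressions in a natural ambient ring of $\hbar$-twisted differential operators, and then compare them directly. On the quantum $D$-module side, \cref{thm:qst-is-pcurv} (enhanced by \cref{thm:qst-is-pcurv-onthenose} under generic semisimplicity) identifies $\Sigma^{T^!}_x|_{\hbar=t}$ with $\nabla_x^p - t^{p-1}\nabla_x$ for $x\in H^2_{T^!}(X^!;\mathbb{F}_p)$. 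This was established for Springer resolutions in \cite{Lee23b}; the analogue for hypertoric varieties must be verified here, using the explicit combinatorial description of the hypertoric quantum connection. On the trace side, the explicit crystalline-differential-operator realizations of the Frobenius-constant quantization, namely BMR \cite{bezrukavnikov-mirkovic-rumynin} for $T^*(G/B)$ and Lonergan \cite{lonergan} for abelian Coulomb branches, present $\Lambda(\bar{x})$ as precisely the $p$-curvature $(x\partial)^p - \hbar^{p-1}(x\partial)$ type expression on degree-$2$ generators of the relevant $B$-algebra.

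Once both sides are in this common form, matching them on degree-$2$ classes reduces to the Hikita--Nakajima isomorphism $\mathcal{A}_0^2\cong H^2_{T^!\times\mathbb{G}_m}(X^!)$ together with the compatibility of the two ``first-order'' actions on the respective $D$-modules, which is already part of the $M_{eq,reg}\cong M^!_{Kah,reg}$ identification from \cite{KMP21}. Taking $p$-th powers and subtracting the $t^{p-1}$-shifted first-order operator term then gives the matching for $a\in \mathscr{B}(\mathcal{O}(\mathcal{X}))$ of degree $2$. To promote this to all of $\mathscr{B}(\mathcal{O}(\mathcal{X}))$, one invokes generation in degree $2$: as noted in \cref{rem:hikita-nakajima}, both $H^*_{T^!}(X^!;\Lambda')$ and $\mathscr{B}(\mathcal{O}(\mathcal{X}))$ are generated by their degree-$2$ parts for Springer and hypertoric, and both actions are algebra actions --- the trace side by \cref{prop:frob-acts-on-Dmod-factors-through-B} (since $\Lambda$ is an algebra map), the quantum side by the quantum Cartan relation \cref{prop:qst-properties}(v).

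The main obstacle I anticipate lies in establishing the $p$-curvature identification $\Sigma^{T^!}_x = \nabla_x^p - t^{p-1}\nabla_x$ on the nose for hypertoric varieties. Via \cref{thm:qst-is-pcurv-onthenose}, this hinges on jointly simple spectrum of the $T^!\times\mathbb{G}_m$-equivariant quantum multiplication by degree-$2$ classes; for hypertoric varieties this should follow from the explicit Shenfeld-type presentation of the quantum connection and the toric structure of $T^!$-fixed loci, but requires careful control of the generic spectrum at $t=0$. A secondary technical point is that Lonergan's Frobenius splitting is \emph{a priori} defined on the Coulomb branch rather than on the image of the $B$-algebra construction, so one must verify that the reduction to an Artin--Schreier expression is compatible with the symmetry-breaking cocharacter $\sigma$ used to define $\mathscr{B}(\mathcal{O}(\mathcal{X}))$; this is where the structural parallel between Lonergan's $\mu_p$-equivariant BFN construction and the $\mu_p$-equivariant moduli used to define $\Sigma^{T^!}_a$ provides the crucial geometric bridge.
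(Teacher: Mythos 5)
Your proposal follows essentially the same route as the paper: reduce the matching of $\Lambda(a)$ and $\Sigma^{T^!}_a|_{\hbar=t}$ to identifying each with the Artin--Schreier $p$-curvature of its respective $D$-module (the paper's \cref{conj:pcurv-for-trace} and \cref{conj:pcurv-for-quantum}), which are automatically intertwined by the $R_{eq}\cong R^!_{Kah}$-compatible isomorphism from \cite{KMP21}, and then bootstrap from degree $2$ to the whole algebra via generation in degree $2$ and the two algebra-action properties. The specific ingredients you anticipate are the ones the paper uses, including the jointly-simple-spectrum verification for hypertoric varieties (done via the Harada--Holm fixed-point computation) and the reduction of Lonergan's splitting to an Artin--Schreier expression on the degree-$2$ generators via Steenrod-linearity.
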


\begin{rem}
Quantum Cartan relation (cf. \cref{prop:qst-properties} (v)) shows that the \emph{quantum} multiplication is mirror to ordinary multiplication in the Coulomb branch (as $\Lambda$ is an algebra map). This is compatible with the $D$-module isomorphism: by taking the $\hbar \to 0$ limits, we see that quantum multiplication by degree $2$ classes are mirror to multiplication by classes in $\mathcal{A}_0^2$.
\end{rem}

\begin{rem}\label{rem:higgs-coulomb}
    In general, we expect the result to be true when $X$ is (a resolution of) a BFN Coulomb branch defined over $k$, and $X^!$ is a Higgs branch over $\mathbb{C}$. The Frobenius-constant quantization operators $\Lambda$ acting on $M_{eq, reg}$ should arise from Lonergan's construction as recalled above.
\end{rem}

\section{Quantum Hikita conjecture mod $p$: Proofs}

In this section, we verify the mod $p$ 3D mirror symmetry conjecture in the case of Springer resolutions and hypertoric varieties. We describe a restricted quantized algebra structure on the ring of differential operators, giving rise to the action of $p$-curvature on both the quantum $D$-module and $D$-module of twisted traces. Then we identify the action of Frobenius-linear operators for degree $2$ elements with the action of $p$-curvature. The general result follows when the algebras acting on the $D$-modules are generated in degree $2$.

Our description of the Frobenius-constant quantization operators, and the proof that they are equal to the $p$-curvature, uses the theory of restricted Lie algebras (can be found in standard textbooks such as \cite[Section 7.10]{jantzen-book}) and the closely related concept of restricted quantized algebras (due to \cite[Section 1]{bezrukavnikov-kaledin-quantp}). For more detailed discussions of these structures we refer to the references above.

\subsection{Restricted quantized algebra and $p$-curvature}
The action of the Frobenius-linear operators on the $D$-modules can be understood as the action from the restricted powers in the ring of differential operators.

\begin{defn}[{\cite[Definition 1.9, reformulated]{bezrukavnikov-kaledin-quantp}}]\label{defn:restricted-quantized-algebra}
    A \emph{restricted quantized algebra} $A$ over $k$ is a flat $k[\![\hbar]\!]$-algebra with a Poisson bracket $\{-,-\}$ and a power operation $(-)^{[p]}: A \to A$ preserving $\hbar$ such that

    \begin{itemize}
        \item $(A, \{ -, - \})$ together with $(-)^{[p]}$ is a restricted Lie algebra:
        \begin{enumerate}
            \item for $\forall a \in k$ and $x \in A$, we have $(ax)^{[p]} = a^p x^{[p]}$,
            \item $\{ x^{[p]}, y \} = (\mathrm{ad} \ x)^p (y)$ for $x , y \in A$, and 
            \item $(x+y)^{[p]} = x^{[p]} + y^{[p]} + L(x,y)$, where $L(x,y)$ is the Lie polynomial defined by 
            \begin{equation*}
                \hbar^{p-1}L(x,y) = (x+y)^p - x^p - y^p;
            \end{equation*}
        \end{enumerate}
    \item we have
    \begin{equation}
        \hbar \{x, y \} = [x,y]  = xy - yx, \quad \quad \forall x,y \in A,
    \end{equation}
    i.e., the quantization condition;
    \item $(xy)^{[p]} = x^p y^{[p]} + x^{[p]}y^p - \hbar^{p-1} x^{[p]}y^{[p]} + P(x,y)$, where $P(x,y)$ is the universal polynomial defined by
    \begin{equation}
        \hbar^{p-1} P(x,y) = (xy)^p - x^p y^p.
    \end{equation}
    \end{itemize}
\end{defn}
We introduce an additional flatness assumption to the definition in \cite{bezrukavnikov-kaledin-quantp} so that the following characterization holds. Note that by \cite[Equation (1.7)]{bezrukavnikov-kaledin-quantp}, we have $(\hbar x)^{[p]} = \hbar x^p$ given that $s$ defined below is an algebra map using the flatness assumption. By passing to the classical limit $\hbar = 0$, one obtains a notion of a restricted Poisson algebra \cite[Definition 1.8]{bezrukavnikov-kaledin-quantp}.

\begin{lem}\label{lem:BK-defn-reformulation}
    Let $A$ be a flat $k[\![\hbar]\!]$-algebra over $k$ that is a quantization of $A/\hbar$, with a power operation $(-)^{[p]} : A^{(1)} \to A$ extending the Frobenius on $k$ and preserving $\hbar$, together with a Poisson bracket defined by the quantization condition $\hbar\{x, y\} = [x,y]$. Then $(A, \{-,-\}, (-)^{[p]})$ satisfies the axioms of a restricted quantized algebra  if and only if the map (the \emph{Artin--Schreier map}) 
    \begin{equation}
        s := (-)^p - \hbar^{p-1} (-)^{[p]} : A^{(1)} \to A
    \end{equation}
    is a central algebra map.
\end{lem}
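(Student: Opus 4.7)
The plan is to verify that each axiom in the definition of a restricted quantized algebra corresponds precisely to one algebraic property of the Artin--Schreier map $s(x) = x^p - \hbar^{p-1} x^{[p]}$, and then assemble these equivalences. Throughout, flatness of $A$ over $k[\![\hbar]\!]$ is what allows us to cancel common factors of $\hbar$ when comparing the two sides.

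First, I would treat the formal/set-theoretic axioms. Frobenius-linearity $(ax)^{[p]} = a^p x^{[p]}$ for $a \in k$ translates directly into $s(ax) = a^p s(x)$, which is precisely the condition that $s: A^{(1)} \to A$ is $k$-linear (via the standard identification of Frobenius-semilinear maps $A \to A$ with $k$-linear maps $A^{(1)} \to A$). Next, the identity $\hbar^{p-1} L(x,y) = (x+y)^p - x^p - y^p$ defines $L$ unambiguously by flatness, and substituting gives $s(x+y) - s(x) - s(y) = -\hbar^{p-1}\bigl((x+y)^{[p]} - x^{[p]} - y^{[p]} - L(x,y)\bigr)$; again by flatness, additivity of $s$ is equivalent to the restricted-Lie axiom $(x+y)^{[p]} = x^{[p]} + y^{[p]} + L(x,y)$. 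The analogous calculation using $\hbar^{p-1}P(x,y) = (xy)^p - x^p y^p$ shows that multiplicativity $s(xy) = s(x)s(y)$ is equivalent to the product axiom $(xy)^{[p]} = x^p y^{[p]} + x^{[p]} y^p - \hbar^{p-1} x^{[p]} y^{[p]} + P(x,y)$; unitality $s(1) = 1$ corresponds to $1^{[p]} = 0$, which is forced by the product axiom applied to $x = y = 1$ together with the fact that $1 - \hbar^{p-1} 1^{[p]}$ is a unit in $A$.

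The main step is the equivalence between centrality of $s$ and the Poisson-type axiom $\{x^{[p]}, y\} = (\mathrm{ad}\,x)^p(y)$. Here the key ingredient is the classical Jacobson identity
\[
(\mathrm{ad}\, x)^p(y) = [x^p, y]
\]
in any associative algebra over $\mathbb{F}_p$, which holds in $A$ because the binomial coefficients $\binom{p}{k}$ vanish for $0 < k < p$. Combining this with the quantization condition $\hbar\{u,v\} = [u,v]$, the axiom becomes
\[
\hbar^{-1}[x^{[p]}, y] = \hbar^{-p}[x^p, y],
\]
i.e., $[x^p - \hbar^{p-1}x^{[p]}, y] = [s(x), y] = 0$ for all $y$. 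Flatness is again used to divide by the appropriate power of $\hbar$ and to conclude the ``only if'' direction.

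Assembling these equivalences proves the lemma in both directions: the axioms of a restricted quantized algebra are equivalent to saying that $s$ is $k$-linear, additive, multiplicative, unital, and lands in the center of $A$, i.e.\ that $s$ is a central algebra map $A^{(1)} \to A$. The main obstacle is the centrality step, both because it requires invoking Jacobson's identity and because one must carefully track powers of $\hbar$ and use flatness; the remaining equivalences are direct computations from the definition of $s$ once $L(x,y)$ and $P(x,y)$ are recognized as the $\hbar^{p-1}$-normalized differences appearing in the statement.
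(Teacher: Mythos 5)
Your proof is correct and follows essentially the same route as the paper: each axiom of a restricted quantized algebra is matched to the corresponding property of $s$ (Frobenius-linearity, additivity via $L$, multiplicativity via $P$, and centrality via the identity $(\mathrm{ad}\,x)^p(y) = [x^p,y]$ in characteristic $p$), with flatness used to cancel powers of $\hbar$. Your explicit treatment of unitality via the product axiom at $x=y=1$ is a small point the paper's proof omits, but otherwise the arguments coincide.
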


\begin{proof}
    First assume that $s(x) = x^p - \hbar^{p-1} x^{[p]}$ is a central algebra map. We check the axioms of the restricted Lie algebra.
        Let $x, y \in A$. Then by centrality, we have
    \begin{equation}
        \begin{aligned}
            0&=  s(x)y - ys(x) \\&= x^p y - \hbar^{p-1}x^{[p]}y - y x^p + \hbar^{p-1}y x^{[p]} \\
            &= \hbar \{x^p, y\} - \hbar^p \{x^{[p]}, y\}.
        \end{aligned}
    \end{equation}
    Observe that $\hbar \{x^p , y\} = [x^p, y] = (\hbar\cdot  \mathrm{ad}\ x)^p y$, since in characteristic $p$ we have
    \begin{equation}
        [x^p, y] = \overbrace{[x, [x, \cdots,[x,y]\cdots]]}^{p};
    \end{equation}
    hence we have $\hbar^p (\mathrm{ad}  \ x)^p y = \hbar^p \{x^{[p]} , y\}$. Since there is no $\hbar$-torsion due to the flatness assumption, we conclude that $(\mathrm{ad}  \ x)^p y = \{ x^{[p]}, y\}$.

    By linearity, we have
    \begin{equation}
        \begin{aligned}
            s(x +y) &= (x+y)^p - \hbar^{p-1} (x+y)^{[p]} \\ &=
            x^p - \hbar^{p-1} x^{[p]} + y^p - \hbar^{p-1} y^{[p]} \\
            &= s(x) + s(y).
        \end{aligned}
    \end{equation}
    It follows that
    \begin{equation}
        \hbar^{p-1} L(x,y) = (x+y)^p - x^p - y^p = \hbar^{p-1} \left( (x+y)^{[p]} - x^{[p]} - y^{[p]} \right),
    \end{equation}
    so from flatness, we have $(x+y)^{[p]} - x^{[p]} - y^{[p]} = L(x,y)$. Together with the result above we conclude that $(A,\{-,-\}) $ is a restricted Lie algebra. 
    
    Similarly, $s(xy) = s(x)s(y)$ implies the condition $(xy)^{[p]} = x^p y^{[p]} + x^{[p]}y^p - \hbar^{p-1} x^{[p]}y^{[p]} + P(x,y)$ for the restricted quantized algebra structure.

    Conversely, let $A$ be a restricted quantized algebra. Then
    \begin{equation}
        \begin{aligned}
            s(x)y - ys(x) &= x^p y - \hbar^{p-1}x^{[p]}y - y x^p + \hbar^{p-1}y x^{[p]} \\
            &= \hbar \{x^p, y\} - \hbar^p \{x^{[p]}, y\} \\
            &= \{(\hbar x)^{[p]}, y\} - \hbar^p (\mathrm{ad} \ x)^p (y) \\
            &= (\mathrm{ad} \ \hbar x)^p (y) - \hbar^p (\mathrm{ad} \ x)^p (y) = 0
        \end{aligned}
    \end{equation}
    shows that indeed $s$ is central. The linearity and the algebra property of $s$ is checked exactly as above.
\end{proof}



\begin{assm}
    The symplectic resolution $X/k$, and its universal deformations and quantizations, are defined over $\mathbb{F}_p$ and $X/k$ is the pullback of $X/\mathbb{F}_p$ by the structure map $\mathbb{F}_p \to k$.
\end{assm}

 In particular, we assume that the canonical quantization $\mathcal{A} \cong \mathcal{A}_{\mathbb{F}_p} \otimes_{\mathbb{F}_p} k$ so that the projection $(\mathcal{A}_0^2)_{\mathbb{F}_p} \to \mathfrak{t}_{\mathbb{F}_p}$ in the quantization exact sequence is compatible with the $\mathbb{F}_p$-forms.

\begin{prop}\label{prop:restricted-quantized-structure}
    Over a field $k$ of characteristic $p$, the equivariant ring of differential operators
        \begin{equation}
        R_{eq} : =  k [z^\mu] \otimes \mathrm{Sym} \  \mathcal{A}^2_0,
    \end{equation}
    can be equipped with a structure of a restricted quantized algebra, with the power map defined by $a^{[p]} = a$ for $a \in (\mathcal{A}_0^2)_{\mathbb{F}_p}$, $\hbar^{[p]} = \hbar$, and $(z^\mu)^{[p]} = 0$. Similarly, the K\"ahler ring of differential operators
    \begin{equation}
        R_{Kah} := k [ z^\alpha] \otimes \mathrm{Sym} H^2_{T \times \mathbb{G}_m} (X)
    \end{equation}
    is equipped with a structure of a restricted quantized algebra such that $x^{[p]} = x$ for $x \in H^2_T(X;\mathbb{F}_p)$, $\hbar^{[p]} = \hbar \in H^2_{\mathbb{G}_m}(\mathrm{pt})$, and $(z^\alpha)^{[p]} = 0$.
\end{prop}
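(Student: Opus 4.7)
The strategy is to invoke \cref{lem:BK-defn-reformulation}: to endow $R_{eq}$ with a restricted quantized algebra structure realizing the prescribed values of $(-)^{[p]}$ on generators, it suffices to construct a central $k$-algebra map (the Artin--Schreier map)
\begin{equation*}
    s \colon R_{eq}^{(1)} \longrightarrow R_{eq}, \qquad s(\hbar) = 0, \quad s(a) = a^p - \hbar^{p-1} a, \quad s(z^\mu) = z^{p\mu}
\end{equation*}
for $a \in (\mathcal{A}_0^2)_{\mathbb{F}_p}$, and to verify the congruence $s(x) \equiv x^p \pmod{\hbar^{p-1}}$ for every $x \in R_{eq}$. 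The power operation $x^{[p]} := \hbar^{-(p-1)}(x^p - s(x))$ is then well-defined by flatness of $R_{eq}$ over $k[\![\hbar]\!]$, and a direct calculation recovers the stipulated values $\hbar^{[p]} = \hbar$, $a^{[p]} = a$, and $(z^\mu)^{[p]} = 0$. The construction for $R_{Kah}$ proceeds in parallel, with $\mathcal{A}_0^2$ replaced by $H^2_{T \times \mathbb{G}_m}(X)$ and $z^\mu$ by $z^\alpha$, using the cohomology exact sequence in place of the quantization exact sequence.

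The well-definedness and centrality of $s$ both reduce to one computation. For $a \in (\mathcal{A}_0^2)_{\mathbb{F}_p}$ and any $\nu$, set $\kappa := \langle \nu, \bar{a} \rangle \in \mathbb{F}_p$. Iterating the defining relation $a z^\nu = z^\nu(a + \hbar\kappa)$ and using the centrality of $\hbar$ yields
\begin{equation*}
    a^p z^\nu = z^\nu (a + \hbar\kappa)^p = z^\nu (a^p + \hbar^p \kappa^p) = z^\nu (a^p + \hbar^p \kappa),
\end{equation*}
invoking Freshman's dream in characteristic $p$ together with $\kappa^p = \kappa$, while $\hbar^{p-1} a z^\nu = z^\nu(\hbar^{p-1} a + \hbar^p \kappa)$. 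Subtracting gives $s(a) z^\nu = z^\nu s(a)$, which simultaneously establishes centrality of $s(a)$, compatibility of $s$ with the defining relation of $R_{eq}$ (the extra term is annihilated by $s(\hbar) = 0$), and, as a specialization, centrality of $s(z^\mu) = z^{p\mu}$. Hence $s$ extends to a central $k$-algebra map on all of $R_{eq}^{(1)}$.

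The final and most delicate step is to propagate the congruence $s(x) \equiv x^p \pmod{\hbar^{p-1}}$ from the generators to arbitrary $x \in R_{eq}$. The quantization condition $[R_{eq}, R_{eq}] \subseteq \hbar R_{eq}$ is essential here: the Jacobson formula expresses $(y+z)^p - y^p - z^p$ as a sum of degree-$p$ Lie monomials involving $p-1$ iterated commutators, so each term lies in $\hbar^{p-1} R_{eq}$; a parallel expansion, driven by the vanishing of $\binom{p}{k}$ for $0 < k < p$, places $(yz)^p - y^p z^p$ in $\hbar^{p-1} R_{eq}$ as well. Combined with additivity and multiplicativity of $s$, an induction on word length in the generators extends the congruence. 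I expect this Jacobson-style bookkeeping to be the main technical obstacle, but both identities are standard features of $\hbar$-adic associative quantizations; once in place, \cref{lem:BK-defn-reformulation} delivers the restricted quantized algebra structures on both $R_{eq}$ and $R_{Kah}$.
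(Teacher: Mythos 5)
Your proposal is correct and follows essentially the same route as the paper: define the Artin--Schreier map $s$ on generators, check compatibility with the relation $a z^\mu = z^\mu(a + \hbar\langle\mu,\bar a\rangle)$ together with centrality (your Freshman's-dream computation is the same as the paper's observation that $s(a) = \prod_{j\in\mathbb{F}_p}(a - j\hbar)$ is invariant under shifts by integer multiples of $\hbar$), and conclude via \cref{lem:BK-defn-reformulation}. The only difference is that you make explicit the $\hbar^{p-1}$-divisibility of $x^p - s(x)$ needed to define $(-)^{[p]}$ on all of $R_{eq}$, which the paper leaves implicit in the universal polynomials $L$ and $P$ appearing in \cref{defn:restricted-quantized-algebra}; your Jacobson-formula bookkeeping for that step is standard and correct.
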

\begin{proof}
    We consider the case of $R_{eq}$ (the proof for $R_{Kah}$ is exactly the same). 

    We define $s$ on the generators of the algebra $R_{eq} := k[z^\mu] \otimes \mathrm{Sym} \ \mathcal{A}_0^2$, and extend by the property that $s$ is a $k$-linear algebra map. The algebra property determines the action of $(-)^{[p]}$, since it mandates that
    \begin{equation}
        \hbar^{p-1}(ab)^{[p]} = \hbar^{p-1} \left( a^p b^{[p]} + a^{[p]}b^p - \hbar^{p-1} a^{[p]}b^{[p]} + P(a,b)\right).
    \end{equation}
    To see that this is well-defined, we consider the defining relation $ a z^\mu  = z^\mu (a + \hbar \langle \mu, \bar{a} \rangle )$ for $a \in (\mathcal{A}_0^2)_{\mathbb{F}_p}$. Then $s(az^\mu) = s(a) s(z^\mu) = (a^p - \hbar^{p-1}a) z^{p\mu}$ is indeed equal to $s(z^\mu) s(a + \hbar \langle \mu, \bar{a} \rangle) = z^{p\mu} (a^p - \hbar^{p-1} a)$, because
        \begin{equation}
        s(a) = a^p - \hbar^{p-1}a = \prod_{j \in \mathbb{F}_p} (a- j \hbar),
    \end{equation}
    is invariant under shift by integer multiples of $\hbar$ (using that $\bar{a} \in \mathfrak{t}_{\mathbb{F}_p}$), and $z^{p\mu}$ is central (see below).
    
    Next, we check the centrality of the image $s := (-)^p - \hbar^{p-1}(-)^{[p]}$ on the generators $a \in \mathcal{A}_0^2$ and $z^\mu$. By Frobenius linearity, it suffices to check for $a \in (\mathcal{A}_0^2)_{\mathbb{F}_p}$. The centrality follows from the defining commutation relations in $R_{eq}$. First we show that $s(a) = a^p - \hbar^{p-1} a$ for $a \in (\mathcal{A}_0^2)_{\mathbb{F}_p}$ commutes with $z^\mu$. To see this, again note that
    $s(a) = a^p - \hbar^{p-1}a = \prod_{j \in \mathbb{F}_p} (a- j \hbar)$,
    and that
    \begin{equation}
        [a - j\hbar, z^{\mu} ] = [a, z^\mu] - [j\hbar , z^\mu] = [a,z^\mu] = \hbar \langle \mu, \bar{a} \rangle z ^\mu
    \end{equation}
    from the defining relation of $R_{eq}$. It then follows that
    \begin{align}
        s(a) z^\mu &= \prod_{j \in \mathbb{F}_p} (a-j\hbar) \cdot  z^\mu  \\ &= z^\mu \prod_{j \in \mathbb{F}_p} \big( (a - j\hbar) + \hbar \langle \mu, \overline{a} \rangle \big)
        \\ &= z^\mu s (a + \hbar \langle \mu, \bar{a} \rangle)
        \\&= z^\mu  s(a),
    \end{align}
    
    as desired. Finally we show that $s(z^\mu) = z^{p\mu}$ commutes with $a \in (\mathcal{A}_0^2)_{\mathbb{F}_p}$:
    \begin{equation}
        a z^{p\mu} = z^{p\mu} a + \hbar \langle p\mu, \bar{a} \rangle z^{p\mu} = z^{p\mu} a. \qedhere
    \end{equation}
    \end{proof}

    The image of the Artin--Schreier map $s$ gives rise to central endomorphisms of the $D$-module:
    \begin{defn}
        The \emph{$p$-curvature} of the $D$-module $M_{eq, reg}$ ($M_{Kah, reg}$, resp.) are the endomorphisms of $D$-modules given by the action of $s(R_{eq}) \subseteq R_{eq, reg}$ ($s(R_{Kah}) \subseteq R_{Kah, reg}$, resp.) for the Artin--Schreier map $s : R^{(1)} \to R$ constructed in \cref{prop:restricted-quantized-structure}.
    \end{defn}

    The fact that the action of $s(R)$ is an endomorphism of $D$-modules (that is, \emph{covariant constancy} of the $p$-curvature) is an immediate consequence of the centrality of the image of $s$.

    In the context of \cref{conj:hikita-quantum-p}, the isomorphism of $D$-modules $M_{eq, reg} \cong M^!_{Kah, reg}$ for symplectically dual pairs $X/k$ and $X^!/\mathbb{C}$ compatible with $R_{eq} \cong R^!_{Kah}$ implies the identification of $p$-curvature operators for both $D$-modules. Hence, in the case where $\mathcal{A}_0$ and $H^*_{T^! \times \mathbb{G}_m}(X^!)$ are generated by (conical and cohomological, resp.) degree $2$ elements, \cref{conj:hikita-quantum-p} can be reduced to the following claims.

    \begin{conj}\label{conj:pcurv-for-trace}
        For the $D$-module $M_{eq, reg}$ of twisted traces, the $p$-curvature endomorphism given by $s(a)$ for $a \in \mathcal{A}_0^2$ can be identified with the action of the Frobenius-constant quantization operator $\Lambda(\bar{a})$ (see \cref{sssec:frobquant-opers} for the distinguished choice of $\Lambda$).
    \end{conj}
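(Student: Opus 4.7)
The strategy is to reduce the claimed identification of operators on $M_{eq, reg}$ to an equality of central elements in $\mathcal{A}$ itself, then verify that equality via the explicit descriptions of the Frobenius splitting for the Springer and hypertoric families. First, compute the action of $s(a)$ on elementary tensors. For $a \in (\mathcal{A}_0^2)_{\mathbb{F}_p}$ and $b \in \mathcal{A}_\lambda$, the $R_{eq}$-action from \cref{defn:Dmod-eq} reads $a \cdot (z^\lambda \otimes b) = z^\lambda \otimes (a + \hbar \langle \lambda, \bar{a}\rangle) b$. Since $\hbar\langle\lambda, \bar{a}\rangle$ is a central scalar commuting with $a$, iterating and invoking the Freshman's dream in characteristic $p$ gives $a^p \cdot (z^\lambda \otimes b) = z^\lambda \otimes (a^p + \hbar^p \langle \lambda, \bar{a}\rangle^p) b$. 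Applying Fermat's little theorem to $\langle \lambda, \bar{a}\rangle \in \mathbb{F}_p$ (as already exploited in the proof of \cref{prop:restricted-quantized-structure}), and subtracting the analogous computation for $\hbar^{p-1} a$, we obtain
\begin{equation*}
s(a) \cdot (z^\lambda \otimes b) = z^\lambda \otimes (a^p - \hbar^{p-1} a) b.
\end{equation*}
Thus $s(a)$ acts by left multiplication by the Artin--Schreier element $a^p - \hbar^{p-1} a \in \mathcal{A}$. Since $\Lambda(\bar{a}) \in Z(\mathcal{A})$ has $T$-weight zero, it too acts by left multiplication, so the conjecture reduces to the identity $\Lambda(\bar{a}) = a^p - \hbar^{p-1} a$ in $\mathcal{A}$ (at worst, modulo the two-sided ideal defining $M_{eq}$).

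For the Springer resolution, the quantization $\mathcal{A}$ is identified with an asymptotic algebra of crystalline differential operators, and by \cite{bezrukavnikov-mirkovic-rumynin} the Frobenius splitting $\Lambda$ is precisely the $p$-center map $\Lambda(\bar{a}) = a^p - \hbar^{p-1} a^{[p]}_{\mathfrak{g}}$ on restricted elements $a \in \mathfrak{g}$. Choosing $\mathbb{F}_p$-generators of $\mathcal{A}_0^2$ from lifts of a Chevalley basis of $\mathfrak{t}_{\mathbb{F}_p}$ (for which $a^{[p]}_{\mathfrak{g}} = a$), together with the kernel $H_2(X) \oplus k\hbar$ of the quantization exact sequence (on which the identity is trivial), the formula reduces to $\Lambda(\bar{a}) = a^p - \hbar^{p-1} a$ in $\mathcal{A}$ on the nose. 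For hypertoric varieties, realized as abelian BFN Coulomb branches, the abelianness of $G$ allows one to unwind Lonergan's composition from \cref{sssec:frobquant-lonergan} (Steenrod power, descent isomorphism, Borel--Moore specialization) into an explicit formula in a ring of crystalline differential operators on a torus, where the Artin--Schreier shape emerges on a set of generators of $\mathcal{A}_0^2$. Multiplicativity of both $s$ and $\Lambda$ as Frobenius-linear algebra maps into $Z(\mathcal{A})$ then extends the identity to all of $\mathcal{O}(\mathcal{X})^{(1)}_0$.

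The main obstacle will be the hypertoric case: translating Lonergan's homological construction of $\Lambda_{Coulomb}$ into an explicit algebraic formula on generators requires identifying $H_*^{G\brO \rtimes \mathbb{G}_m}(\mathcal{R}_{G, V})$ with (a completion of) a crystalline differential operator algebra in a form that makes the Steenrod and specialization maps computable on generators. The Springer case, by contrast, amounts largely to reconciling conventions on top of existing results in the literature.
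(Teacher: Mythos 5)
Your proposal is correct and follows essentially the same route as the paper: the paper likewise reduces the statement to the identity $\Lambda(\bar a)=a^p-\hbar^{p-1}a^{[p]}$ of left-multiplication operators (with $a^{[p]}=a$ on $\mathfrak{t}_{\mathbb{F}_p}$, proved via faithfulness of the adjoint representation), verifies it for the Springer resolution from the Bezrukavnikov--Mirkovi\'c--Rumynin description of the $p$-center, and for hypertoric varieties computes $\Lambda$ explicitly both in the Higgs picture (restriction of the canonical Frobenius center $x_i\mapsto x_i^p$, $y_i\mapsto\partial_i^p$ of the Weyl algebra) and in the Coulomb picture. The only difference of emphasis is that the Coulomb-side computation you flag as the main obstacle is dispatched quickly in the paper using unitality and Steenrod-linearity of Lonergan's map, which reduce $\Lambda_{Coulomb}$ on equivariant parameters to the classical Steenrod operation $\lambda_i\mapsto\lambda_i^p-\hbar^{p-1}\lambda_i$.
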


    \begin{conj}[{cf. \cref{conj:p-curvature}}]\label{conj:pcurv-for-quantum}
        For the (specialized) quantum $D$-module $M_{Kah, reg}$, the $p$-curvature endomorphism given by $s(x)$ for $x \in H^2_{T \times \mathbb{G}_m}(X)$ can be identified with the action of the (specialized) quantum Steenrod operator $\Sigma_x^T |_{\hbar = t}$.
    \end{conj}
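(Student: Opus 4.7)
The plan is to identify both $s(x)$ and $\Sigma_x^T|_{\hbar=t}$ with the $p$-curvature $\nabla_x^p|_{\hbar=t} - t^{p-1}\nabla_x|_{\hbar=t}$ of the specialized quantum connection, and then invoke the existing identification of quantum Steenrod operators with $p$-curvature in the relevant examples.

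First, I would unpack the $p$-curvature endomorphism $s(x)$ on $M_{Kah,reg}$ concretely. By \cref{prop:restricted-quantized-structure}, the restricted power structure on $R_{Kah}$ satisfies $x^{[p]} = x$ for $x \in H^2_T(X;\mathbb{F}_p)$ (and $\hbar^{[p]} = \hbar$), so on a degree 2 class the Artin--Schreier image is $s(x) = x^p - \hbar^{p-1}x$. By \cref{defn:Dmod-quantum} together with \cref{rem:hbar-vs-t}, the action of $x$ on $M_{Kah,reg}$ is precisely the specialized quantum connection $\nabla_x|_{\hbar=t} = t\partial_{\bar{x}} + x\star_T$. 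Consequently $s(x)$ acts on $M_{Kah,reg}$ as $\nabla_x^p|_{\hbar=t} - t^{p-1}\nabla_x|_{\hbar=t}$, which is exactly the $p$-curvature of the specialized quantum connection.

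Second, \cref{thm:qst-is-pcurv} gives the matching description of the right hand side: for $p \gg 0$ and $x \in H^2_T(X;\mathbb{F}_p)$,
\begin{equation*}
\Sigma_x^T = \nabla_x^p - t^{p-1}\nabla_x + N,
\end{equation*}
with $N$ a nilpotent operator. After specializing $\hbar = t$ and comparing with the previous step, \cref{conj:pcurv-for-quantum} reduces to showing $N|_{\hbar=t} = 0$ in the examples of interest (\cref{thm:example}). By \cref{thm:qst-is-pcurv-onthenose}, it is sufficient to verify that the collection $\{y \star_T\}_{y \in H^2_T(X;k)}$ of quantum multiplication operators by degree 2 equivariant classes has jointly simple spectrum.

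Third, this generic semisimplicity hypothesis must be checked in each example. For Springer resolutions $T^*(G/B)$, the verification is already contained in \cite{Lee23b}, so nothing further is required. For hypertoric varieties, I would use the explicit presentation of the $T$-equivariant quantum cohomology ring (in the spirit of McBreen--Shenfeld) to describe the degree 2 quantum multiplication operators in the $T$-fixed point basis, and then show that for K\"ahler and equivariant parameters in sufficiently general position the resulting eigenvalues separate distinct fixed points. The main obstacle I anticipate is precisely this last step: one needs a clean combinatorial argument in the hyperplane arrangement data underlying the hypertoric variety ensuring that, after all specializations required to make $M_{Kah, reg}$ well defined over $k$, the eigenvalues remain pairwise distinct across fixed points. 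With that genericity in hand, the chain \emph{action of $s(x)$ = $p$-curvature of $\nabla$ = $\Sigma_x^T|_{\hbar = t}$} closes up.
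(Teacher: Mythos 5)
Your overall strategy coincides with the paper's: identify the action of $s(x)$ on $M_{Kah,reg}$ with $\nabla_x^p|_{\hbar=t} - t^{p-1}\nabla_x|_{\hbar=t}$, invoke \cref{thm:qst-is-pcurv} and \cref{thm:qst-is-pcurv-onthenose} to reduce everything to the jointly simple spectrum of the degree $2$ quantum multiplication operators, and then verify that hypothesis example by example (citing \cite{Lee23b} for Springer resolutions). That much is correct and is exactly how the paper proceeds.

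Where your proposal stops short is the hypertoric case, and the step you flag as ``the main obstacle'' is resolved in the paper by two observations you do not make. First, there is no need for a presentation of the \emph{quantum} cohomology ring at all: since the operators $y\star_T$ are a formal deformation (as a commuting family, in the Novikov variable) of the classical equivariant cup product operators $y\cup$, joint simplicity of spectrum for the classical operators already implies it for the quantum ones. This removes the dependence on any McBreen--Shenfeld-type computation and all worries about K\"ahler parameters. Second, the classical check is done via the explicit fixed-point formulas of \cite{harada-holm}: the $T\times\mathbb{G}_m$-fixed points correspond to vertices $v$ of the hyperplane arrangement, and the divisor class $\rho_i$ acts on the fixed-point class $[v]$ by a weight whose $\mathfrak{t}^*$-component is nonzero (namely $\eta_{v,i}$) exactly when $v\in H_i$, and is zero (the eigenvalue being $0$ or $\hbar$) otherwise. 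Given two distinct vertices $v\neq v'$, choosing a hyperplane $H_i$ containing $v$ but not $v'$ separates their eigenvalues, which is the entire combinatorial argument (see the proof of \cref{prop:qst-is-pcurv-hypertoric}). So the gap is real but fillable by a short argument; as written, your proof does not close the hypertoric case.
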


The strategy for verifying the mod $p$ quantum Hikita conjecture (\cref{conj:hikita-quantum-p}), modulo the identification of $D$-modules, is to verify \cref{conj:pcurv-for-trace} and \cref{conj:pcurv-for-quantum} for 3D mirror pairs. Namely, the isomorphism of $D$-modules in characteristic $p$ in particular identifies the $p$-curvature of the two $D$-modules, which we interpret as the central endomorphisms we construct.

\subsection{Springer resolutions}
As in \cref{sssec:exmp-springer}, fix the Springer resolution $X = T^*G/B \to \mathcal{N}$, where $\mathcal{N} \subseteq \mathfrak{g}^*$ is the nilpotent cone. This is a conical Hamiltonian symplectic resolution, with the universal deformation $\mathcal{X}$ given by the Grothendieck--Springer simultaneous resolution; the deformation fits into the picture
\begin{center}
    \begin{tikzcd}
        & X = T^*G/B \rar \dar & \mathcal{X} = \widetilde{\mathfrak{g}}^* \dar \\ & \mathcal{N}  = \mathfrak{g}^* \times_{\mathfrak{t}^*/W} 0  \rar & \mathfrak{g}^* \times_{\mathfrak{t}^*/W} \mathfrak{t}^* 
    \end{tikzcd}
\end{center}
where the vertical maps are the affinizations. In particular, the affinization of $\mathcal{X}$ is given by $\mathfrak{g}^* \times_{\mathfrak{t}^*/W} \mathfrak{t}^*$ where we take the adjoint quotient (i.e. characteristic polynomial) map $\mathfrak{g}^* \to \mathfrak{t}^*/W$ and the Weyl quotient $\mathfrak{t}^* \to \mathfrak{t}^*/W$. 

To describe the canonical quantization, note that the universal enveloping algebra $U\mathfrak{g}$ carries the PBW filtration with $\mathfrak{g}$ in degree $2$. With respect to this filtration, one can form the corresponding Rees algebra (with Rees parameter $\hbar$), denoted $U_\hbar \mathfrak{g}$, with the commutator given by $[x, y] = \hbar \{ x, y\}$ for $x, y \in \mathfrak{g}$. 
The canonical quantization is then given by $\mathcal{A} = U_{\hbar} \mathfrak{g} \otimes_{Z_{\mathrm{HC}}(U_\hbar \mathfrak{g})} \mathrm{S} (\mathfrak{t})[\hbar]$ where $\mathrm{S}(\mathfrak{t}) := \mathcal{O}(\mathfrak{t}^*) $ is the symmetric algebra on $\mathfrak{t}$; here, the structure map
\begin{equation}
    Z_{\mathrm{HC}}(U_\hbar \mathfrak{g}) \cong\mathrm{S} (\mathfrak{t}) ^W [\hbar] \subseteq \mathrm{S} (\mathfrak{t})[\hbar]
\end{equation}
is the Harish--Chandra isomorphism.

We denote by $T$ the maximal torus in $G$, which acts in a Hamiltonian fashion on $T^*(G/B)$, and $T \times \mathbb{G}_m$ for the product of the Hamiltonian torus and the copy of $\mathbb{G}_m$ which scales the holomorphic symplectic form.

The cohomology $H^*_{T \times \mathbb{G}_m}(X)$ and the weight degree $0$ part of the quantization $\mathcal{A}_0$ are both generated by elements of (cohomological and conical, resp.) degree $2$, see \cite[Section 7.6, 7.1]{KMP21}. Hence it suffices to verify \cref{conj:pcurv-for-quantum}, \cref{conj:pcurv-for-trace}.

\subsubsection{The $D$-module of twisted traces}
The affinization of the universal deformation of $T^*G/B$ is given by
\begin{equation}
    \mathcal{O}(\mathcal{X}) \cong \mathcal{O}(\mathfrak{g}^*) \otimes_{\mathrm{S}(\mathfrak{t})^W } \mathrm{S}(\mathfrak{t}),
\end{equation}
where $\mathrm{S}(\mathfrak{t})^W \cong k [\mathfrak{t}^*]^W \cong k [\mathfrak{g}^*]^G \hookrightarrow \mathcal{O}(\mathfrak{g}^*)$ is the inclusion of $G$-invariant polynomials.

Correspondingly, the universal quantization of $T^*G/B$ is given by
\begin{equation}
    \mathcal{A} \cong U_\hbar \mathfrak{g} \otimes_{Z(U_\hbar \mathfrak{g})} \mathrm{S} ( \mathfrak{t})[\hbar];
\end{equation}
the structure map $\varphi: Z(U_\hbar \mathfrak{g}) \to \mathrm{S}( \mathfrak{t})^W [\hbar]$ is the Harish--Chandra isomorphism. 

Then the $0$-weight part of the quantization is given by
\begin{equation}
    \mathcal{A}_0 = (U_\hbar \mathfrak{g})_0 \otimes_{Z (U_\hbar \mathfrak{g})} \mathrm{S}(\mathfrak{t}) [\hbar]
\end{equation}
where the $Z(U_\hbar \mathfrak{g}) \to (U_\hbar \mathfrak{g})_0$ is the obvious inclusion.

The following is the key observation that can be extracted from \cite[Section 1.3.3]{bezrukavnikov-mirkovic-rumynin}:

\begin{prop}\label{prop:frob-quant-springer}
    There is a quantized Frobenius map
        \begin{equation}
    \Lambda : \mathcal{O}(\mathcal{X})^{(1)} \to Z(\mathcal{A}),
    \end{equation}
    given by sending
    \begin{equation}
        x \otimes f \in \mathcal{O}(\mathfrak{g}^*)^{(1)} \otimes_{\mathrm{S}(\mathfrak{t})^{W, (1)}} \mathrm{S}(\mathfrak{t})^{(1)} \mapsto (x^p - \hbar^{p-1} x^{[p]}) \otimes (f^p - \hbar^{p-1} f^{[p]})
    \end{equation}
    where $(-)^{[p]} : U_\hbar \mathfrak{g}^{(1)} \to U_\hbar \mathfrak{g}$ is the restricted $p$-th power operation from the restricted Lie algebra structure on $\mathfrak{g}$.
\end{prop}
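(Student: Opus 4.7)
The plan is to construct $\Lambda$ as a tensor product of two Artin--Schreier maps --- one on $U_\hbar \mathfrak{g}$ and one on $\mathrm{S}(\mathfrak{t})[\hbar]$ --- and then check that this descends to the fibered tensor product $\mathcal{O}(\mathfrak{g}^*)^{(1)} \otimes_{\mathrm{S}(\mathfrak{t})^{W,(1)}} \mathrm{S}(\mathfrak{t})^{(1)}$ via the Harish--Chandra isomorphism. The algebra and Frobenius-splitting properties will then follow formally from the corresponding properties of the two constituent maps.

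First I would invoke the classical fact that the restricted Lie algebra structure on $\mathfrak{g}$ endows $U_\hbar \mathfrak{g}$ with a restricted quantized algebra structure in the sense of \cref{defn:restricted-quantized-algebra}, with $(-)^{[p]}$ extending the restricted $p$-operation on $\mathfrak{g}$. By \cref{lem:BK-defn-reformulation}, the Artin--Schreier expression $x \mapsto x^p - \hbar^{p-1} x^{[p]}$ then defines a central algebra map
\begin{equation*}
    \sigma : \mathcal{O}(\mathfrak{g}^*)^{(1)} = \mathrm{S}(\mathfrak{g})^{(1)} \longrightarrow Z(U_\hbar \mathfrak{g}),
\end{equation*}
whose image is an $\hbar$-deformed version of the classical Frobenius center of $U\mathfrak{g}$ \cite{bezrukavnikov-mirkovic-rumynin}. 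Similarly, since $\mathrm{S}(\mathfrak{t})[\hbar]$ is commutative, the restricted structure on $\mathfrak{t}$ induces a (trivially central) restricted quantized algebra structure on $\mathrm{S}(\mathfrak{t})[\hbar]$, yielding a central algebra map $s_\mathfrak{t} : \mathrm{S}(\mathfrak{t})^{(1)} \to \mathrm{S}(\mathfrak{t})[\hbar]$, $f \mapsto f^p - \hbar^{p-1} f^{[p]}$.

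I would then tentatively set $\Lambda := \sigma \otimes s_\mathfrak{t}$ on the unrestricted tensor product. For this to descend to $\mathcal{O}(\mathcal{X})^{(1)}$ and land in $\mathcal{A} = U_\hbar \mathfrak{g} \otimes_{Z(U_\hbar \mathfrak{g})} \mathrm{S}(\mathfrak{t})[\hbar]$, the essential compatibility is
\begin{equation*}
    \varphi(\sigma(g)) = s_\mathfrak{t}(g) \in \mathrm{S}(\mathfrak{t})^W[\hbar] \qquad \text{for all } g \in \mathrm{S}(\mathfrak{t})^{W,(1)},
\end{equation*}
where $g$ is viewed inside $\mathrm{S}(\mathfrak{g})^{(1)}$ via Chevalley restriction and $\varphi$ is the Harish--Chandra isomorphism; this forces $\sigma(g) \otimes 1 = 1 \otimes s_\mathfrak{t}(g)$ in $\mathcal{A}$. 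Establishing this identity is the main obstacle. I would handle it by appealing to the structural description of $Z(U_\hbar \mathfrak{g})$ in characteristic $p$ (due to Veldkamp, reformulated in \cite{bezrukavnikov-mirkovic-rumynin}) as the fibered product of the Frobenius center and the Harish--Chandra center over $\mathrm{S}(\mathfrak{t})^W$, where the gluing morphism on the Frobenius side is precisely $s_\mathfrak{t}$. Alternatively, one can match the two $p$-semilinear algebra maps $\varphi \circ \sigma$ and $s_\mathfrak{t}$ directly on a set of Chevalley generators of $\mathrm{S}(\mathfrak{t})^W$: modulo $\hbar$ both reduce to the classical Frobenius (which matches by the compatibility of the symbol map with Chevalley restriction), and the restricted quantized algebra relations control the $\hbar^{p-1}$-correction inductively.

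With well-definedness in hand, the remaining properties are immediate. $\Lambda$ is an algebra map since both $\sigma$ and $s_\mathfrak{t}$ are, and its image lies in $Z(\mathcal{A})$ because $\sigma$ lands in $Z(U_\hbar \mathfrak{g})$ while $\mathrm{S}(\mathfrak{t})[\hbar]$ is commutative. The Frobenius splitting condition $\Lambda(a) \equiv a^p \pmod{\hbar^{p-1}}$ follows pointwise from the defining formulas, since $\sigma(x) \equiv x^p$ and $s_\mathfrak{t}(f) \equiv f^p$ modulo $\hbar^{p-1}$.
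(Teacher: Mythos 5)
Your overall strategy coincides with the paper's: both cite \cite[Section 1.3.3]{bezrukavnikov-mirkovic-rumynin} for the fact that the Artin--Schreier expression gives a central algebra map $\mathcal{O}(\mathfrak{g}^*)^{(1)} \to Z(U_\hbar\mathfrak{g})$, and both correctly isolate the remaining issue as the compatibility of this map with $f \mapsto f^p - \hbar^{p-1}f^{[p]}$ on $\mathrm{S}(\mathfrak{t})^{W,(1)}$ under the Harish--Chandra isomorphism. Where you diverge is in how that compatibility is established. Your first route --- invoking the Veldkamp-type description of $Z(U_\hbar\mathfrak{g})$ as a fibered product of the Frobenius and Harish--Chandra centers over $\mathrm{S}(\mathfrak{t})^W$, with gluing given by the Artin--Schreier map --- is legitimate and does contain exactly the needed identity, at the cost of using a stronger structural theorem as a black box. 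The paper instead verifies the identity by hand, and the two ingredients it uses are worth noting because your second, ``elementary'' alternative omits both: (i) \cref{lem:restricted-power-id-on-h}, i.e.\ $\sigma^{[p]} = \sigma$ for $\sigma \in \mathfrak{t}_{\mathbb{F}_p}$, which identifies $s_{\mathfrak{t}}(\sigma)$ with $\prod_{k\in\mathbb{F}_p}(\sigma - k\hbar)$; and (ii) the $\rho$-shift in the Harish--Chandra isomorphism, $\varphi(z)|_{\sigma\mapsto\sigma-\hbar\langle\rho,\sigma\rangle}$, which is the one place a discrepancy could actually arise and which is neutralized precisely because $\prod_{k\in\mathbb{F}_p}(\sigma-k\hbar)$ is invariant under shifting $\sigma$ by integer multiples of $\hbar$. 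Your phrase ``the restricted quantized algebra relations control the $\hbar^{p-1}$-correction inductively'' does not engage with this shift and would not by itself constitute a proof; if you want the self-contained argument, replace it with the shift-invariance observation above. With either the Veldkamp citation or that explicit computation in place, the rest of your argument (algebra property, centrality, and $\Lambda(a)\equiv a^p \bmod \hbar^{p-1}$ following factorwise) is correct and matches the paper.
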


We assume the following simple lemma.

\begin{lem}\label{lem:restricted-power-id-on-h}
    Let $\mathfrak{g}$ be a semisimple Lie algebra over $k$ with its restricted Lie algebra structure, in particular equipped with its natural $p$-th power operation $(-)^{[p]} : \mathfrak{g}^{(1)} \to \mathfrak{g}$. Then for $\sigma \in \mathfrak{t}_{\mathbb{F}_p} \subseteq \mathfrak{g}_{\mathbb{F}_p}$ an element of the Cartan subalgebra, we have $\sigma^{[p]} = \sigma$. 
\end{lem}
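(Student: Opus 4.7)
The strategy is to exploit that $\mathfrak{t}$ is an \emph{abelian} restricted subalgebra of $\mathfrak{g}$. Since every iterated bracket on $\mathfrak{t}$ vanishes, the Lie polynomial $L(x,y)$ appearing in the restricted-Lie-algebra axiom $(x+y)^{[p]} = x^{[p]} + y^{[p]} + L(x,y)$ is identically zero on $\mathfrak{t}$, so $(-)^{[p]}$ is additive there. Together with the axiom $(cx)^{[p]} = c^p x^{[p]}$, this makes $(-)^{[p]}$ a Frobenius-semilinear map on $\mathfrak{t}$. Restricted to the $\mathbb{F}_p$-form $\mathfrak{t}_{\mathbb{F}_p}$, where $c^p = c$, it becomes an honest $\mathbb{F}_p$-linear endomorphism, and the difference $\sigma \mapsto \sigma^{[p]} - \sigma$ is therefore $\mathbb{F}_p$-linear on $\mathfrak{t}_{\mathbb{F}_p}$. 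The lemma thus reduces to verifying $\sigma^{[p]} = \sigma$ on any $\mathbb{F}_p$-spanning set of $\mathfrak{t}_{\mathbb{F}_p}$.

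Since $T$ is a split maximal torus defined over $\mathbb{F}_p$, there is a canonical identification $\mathfrak{t}_{\mathbb{F}_p} \cong X_*(T) \otimes_{\mathbb{Z}} \mathbb{F}_p$, under which a cocharacter $\beta \in X_*(T)$ corresponds to $d\beta(\xi) \in \mathfrak{t}_{\mathbb{F}_p}$, with $\xi = t\partial_t$ the canonical generator of $\mathrm{Lie}(\mathbb{G}_m)$. Since such elements span $\mathfrak{t}_{\mathbb{F}_p}$ and $d\beta$ is a morphism of restricted Lie algebras, it suffices to verify the identity $\xi^{[p]} = \xi$ inside $\mathrm{Lie}(\mathbb{G}_m)$. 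This is a direct computation: viewing $\xi$ as a derivation on $\mathbb{F}_p[t,t^{-1}]$, one has $\xi(t^n) = n t^n$, whence
\[
\xi^p(t^n) = n^p t^n = n t^n = \xi(t^n)
\]
by Fermat's little theorem. Hence $\xi^p = \xi$ as derivations, and therefore $\xi^{[p]} = \xi$ in $\mathrm{Lie}(\mathbb{G}_m)$.

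I do not anticipate any genuine obstacle here beyond unwinding definitions; the only standard input is the identification of $\mathfrak{t}_{\mathbb{F}_p}$ with the $\mathbb{F}_p$-points of the cocharacter lattice, which is automatic for split tori. As a cross-check, one can give an alternative argument using the $\mathfrak{sl}_2$-triples $(e_{\alpha}, \alpha^{\vee}, f_{\alpha})$ attached to the simple roots: the coroot $\alpha^{\vee}$ is represented by $\mathrm{diag}(1,-1)$ in the standard $\mathfrak{sl}_2$-representation, for which ordinary matrix $p$-th power fixes $\mathrm{diag}(1,-1)$, and restricted-Lie-algebra functoriality of $\mathfrak{sl}_2 \hookrightarrow \mathfrak{g}$ transports this to $(\alpha^{\vee})^{[p]} = \alpha^{\vee}$. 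The cocharacter version is preferable because it avoids any hypothesis on $p$ dividing the index of the coroot lattice in the cocharacter lattice.
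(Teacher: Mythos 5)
Your argument is correct, but it takes a different route from the paper's. The paper's proof is a one-step computation in the adjoint representation: since $\mathfrak{g}$ is semisimple, $\mathrm{ad}$ is faithful, and it is a restricted representation (i.e.\ $\mathrm{ad}(\sigma^{[p]}) = (\mathrm{ad}\,\sigma)^p$, which is axiom (2) of the restricted structure applied to $y \in \mathfrak{g}$); an element $\sigma \in \mathfrak{t}_{\mathbb{F}_p}$ acts diagonally on the root-space decomposition with eigenvalues $\alpha(\sigma) \in \mathbb{F}_p$, so $(\mathrm{ad}\,\sigma)^p = \mathrm{ad}\,\sigma$ by Fermat, and faithfulness gives $\sigma^{[p]} = \sigma$. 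You instead linearize over $\mathbb{F}_p$ (using that $L(x,y)$ vanishes on the abelian subalgebra $\mathfrak{t}$), reduce to the spanning set of differentials of cocharacters, and compute $\xi^{[p]} = \xi$ in $\mathrm{Lie}(\mathbb{G}_m)$. Both are complete. The paper's argument has the advantage of working entirely with the abstract restricted Lie algebra axioms and needing no input from the algebraic group $G$; yours invokes the functoriality of the restricted structure under the group morphisms $\beta : \mathbb{G}_m \to T \hookrightarrow G$, which implicitly identifies the given $p$-operation on $\mathfrak{g}$ with the one induced from $G$ --- harmless here, since a centerless Lie algebra admits at most one restricted structure, but worth flagging as an extra identification. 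Your reduction-to-$\mathbb{G}_m$ scheme is, on the other hand, the more robust template: it would survive in settings where the adjoint representation is not faithful or where one only controls the restricted structure via the group.
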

\begin{proof}
        Note that the adjoint representation of semisimple $\mathfrak{g}$ is faithful, and is a representation of restricted Lie algebras (in the sense that $x^{[p]}$ acts by $p$ iterates of  $x$, which follows from the definition of the restricted Lie algebra structure on $\mathfrak{g}$). Since $\sigma \in \mathfrak{t}_{\mathbb{F}_p}$ acts by a weight $\lambda \in \mathbb{F}_p$ on $\mathfrak{g}$, and $\lambda^p = \lambda$, it follows that $\sigma^{[p]} = \sigma$.
\end{proof}

\begin{proof}[Proof of \cref{prop:frob-quant-springer}]
    The fact that $(-)^p - \hbar^{p-1}(-)^{[p]} : \mathcal{O}(\mathfrak{g}^{*} )^{(1)} \to Z(U_\hbar \mathfrak{g})$ gives a central algebra map is proven in \cite[Section 1.3.3]{bezrukavnikov-mirkovic-rumynin}. It suffices to show that the extension of this map to $ \mathcal{O}(\mathcal{X})^{(1)} \cong \mathcal{O}(\mathfrak{g}^*)^{(1)} \otimes_{\mathrm{S}(\mathfrak{t})^{W, (1)}} \mathrm{S}(\mathfrak{t})^{(1)}$ is well-defined.

    For this, consider a $W$-invariant polynomial function $f$ on $\mathfrak{t}^*$, that is an element $f = \sum c_I \sigma^I \in \mathrm{S}(\mathfrak{t})^{W}$, where $\sigma^I = \sigma_1^{i_1} \cdots \sigma_r^{i_r}$ is a monomial in (some fixed) basis $\{\sigma_i\}$ of $\mathfrak{t}_{\mathbb{F}_p}$. By the algebra property of the morphism $(-)^p - \hbar^{p-1}(-)^{[p]}$, on one hand we see that
    \begin{equation}
        \Lambda(f \otimes 1) := \sum_I c_I^p \prod_{j=1}^r \left(\sigma_j^p - \hbar^{p-1} \sigma_j^{[p]}\right)^{i_j} \otimes 1 \in Z(U_\hbar \mathfrak{g}) \otimes \mathrm{S}(\mathfrak{t})[\hbar].
    \end{equation}
    On the other hand, for the presentation $f \otimes 1 = 1 \otimes f$ the definition implies
    \begin{equation}
        \Lambda(1 \otimes f) = 1 \otimes \sum_I c_I^p \prod_{j=1}^r  \left(\sigma_j^p - \hbar^{p-1} \sigma_j\right)^{i_j}    \end{equation}

    First, note that by \cref{lem:restricted-power-id-on-h} we have $\sigma_j^{[p]} = \sigma_j$ for all basis elements $\sigma_j \in \mathfrak{t}$.

    Next, we explain that passing from the first tensor factor in $Z(U_\hbar \mathfrak{g}) \otimes \mathrm{S}(\mathfrak{t})[\hbar]$ involves a shift. Consider $z \otimes 1 = 1 \otimes \varphi(z)$ where $\varphi : Z(U_\hbar \mathfrak{g}) \to \mathrm{S}(\mathfrak{t})[\hbar]$ is the Harish--Chandra morphism. Fix $\rho \in \mathfrak{t}^*$ the half sum of positive roots. The Harish--Chandra morphism satisfies that the image $\varphi(z)|_{\sigma \mapsto \sigma - \hbar \langle \rho, \sigma \rangle} \in \mathrm{S}(\mathfrak{t})[\hbar]$ considered as an element of $U_\hbar \mathfrak{g}$ (by the natural morphism $\mathrm{S}(\mathfrak{t})[\hbar] \to U_\hbar \mathfrak{g}$ coming from the inclusion $\mathfrak{t} \to \mathfrak{g}$) is equal to $z$.

    Now observe that
    \begin{equation}
        \left( \sigma^p - \hbar^{p-1} \sigma \right) = \prod_{k \in \mathbb{F}_p} ( \sigma -k \hbar)
    \end{equation}
    is invariant under any shifts $\sigma \mapsto \sigma - \hbar \langle \rho, \sigma \rangle$. Hence it follows that $\Lambda(1 \otimes f) = \Lambda ( f \otimes 1)$. 
\end{proof}

In particular, the computation of the map immediately verifies \cref{conj:pcurv-for-trace} for the Springer resolution:

\begin{prop}
    For $a \in \mathcal{A}_0^2 \cong \mathfrak{t} \oplus k \hbar \oplus \mathfrak{t} \subseteq R_{eq}$, the action of $\Lambda(\bar{a})$ on $M_{eq, reg}$ is equal to the action of $p$-curvature $s(a)$.
\end{prop}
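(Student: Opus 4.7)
The plan is to verify the identity operator-by-operator on each summand of the decomposition $\mathcal{A}_0^2 \cong \mathfrak{t} \oplus k\hbar \oplus \mathfrak{t}$, reducing both sides to the same element $a^p - \hbar^{p-1} a \in \mathcal{A}$ and then checking that the $R_{eq}$-action of $s(a)$ on $M_{eq,reg}$ matches left multiplication by this element. Both $\Lambda$ and $s$ are Frobenius-semilinear in $a$, so it suffices to check on an $\mathbb{F}_p$-basis of $(\mathcal{A}_0^2)_{\mathbb{F}_p}$.

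First I would compute each side as an element of $\mathcal{A}$. For $a \in \mathfrak{t}_{\mathbb{F}_p}$ lying in either the PBW copy $\mathfrak{t} \subseteq \mathfrak{g} \subseteq U_\hbar \mathfrak{g}$ or the Harish--Chandra copy $\mathfrak{t} \subseteq \mathrm{S}(\mathfrak{t})$, the formula from \cref{prop:frob-quant-springer} combined with \cref{lem:restricted-power-id-on-h} (which gives $a^{[p]} = a$) produces $\Lambda(\bar{a}) = a^p - \hbar^{p-1} a$. Meanwhile, the restricted structure on $R_{eq}$ from \cref{prop:restricted-quantized-structure} is set up so that $s(a) = a^p - \hbar^{p-1} a^{[p]} = a^p - \hbar^{p-1} a$ in $\mathrm{Sym}\,\mathcal{A}_0^2 \subseteq R_{eq}$. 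The case $a = \hbar$ is trivial: $\bar{\hbar} = 0$ so $\Lambda(\bar{\hbar}) = 0$, and $s(\hbar) = \hbar^p - \hbar^{p-1}\hbar^{[p]} = 0$ by the convention $\hbar^{[p]} = \hbar$.

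Next I would check that the $R_{eq}$-action of $s(a)$ on a general element $z^\lambda \otimes b \in M_{eq,reg}$ agrees with left multiplication by $\Lambda(\bar{a})$ in the $\mathcal{A}$-factor. From the defining rule $a \cdot (z^\lambda \otimes b) = z^\lambda \otimes (a + c \hbar) b$ with $c := \langle \lambda, \bar{a}\rangle \in \mathbb{F}_p$, iteration gives $a^p \cdot (z^\lambda \otimes b) = z^\lambda \otimes (a + c\hbar)^p b$. Since $a$ commutes with $\hbar$ inside $\mathcal{A}$, the characteristic-$p$ binomial expansion yields $(a + c\hbar)^p = a^p + c^p \hbar^p = a^p + c\hbar^p$ (using $c^p = c$ for $c \in \mathbb{F}_p$). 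Subtracting the analogous action of $\hbar^{p-1} a$, namely $z^\lambda \otimes (\hbar^{p-1} a + c \hbar^p) b$, the $c\hbar^p$ terms cancel exactly, and one obtains
\[ s(a) \cdot (z^\lambda \otimes b) \;=\; z^\lambda \otimes (a^p - \hbar^{p-1} a)\, b \;=\; z^\lambda \otimes \Lambda(\bar{a})\, b, \]
which by the proof of \cref{prop:frob-acts-on-Dmod} is precisely the action of $\Lambda(\bar{a})$ by left multiplication.

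There is essentially no substantive obstacle; the argument is direct once \cref{prop:frob-quant-springer} and \cref{prop:restricted-quantized-structure} are in hand. The only place to be careful is in tracking the two distinct copies of $\mathfrak{t}$ inside $\mathcal{A}_0^2$ separately and ensuring that the restricted-power convention $a^{[p]} = a$ on $R_{eq}$ is consistent both with the restricted Lie structure of $\mathfrak{g}$ (via \cref{lem:restricted-power-id-on-h}) and with the Harish--Chandra factor $\mathrm{S}(\mathfrak{t})[\hbar]$, and that the shift appearing in the $R_{eq}$-action is absorbed cleanly by the identity $(a+c\hbar)^p - \hbar^{p-1}(a+c\hbar) = a^p - \hbar^{p-1} a$ in characteristic $p$.
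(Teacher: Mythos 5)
Your proof is correct and follows the same route as the paper, which simply cites \cref{prop:frob-quant-springer} and \cref{lem:restricted-power-id-on-h} to identify $\Lambda(\bar a)$ with $a^p-\hbar^{p-1}a^{[p]}=s(a)$. The extra step you spell out---that the $R_{eq}$-module action of $s(a)$ on $z^\lambda\otimes b$, which a priori involves the shift $a\mapsto a+\hbar\langle\lambda,\bar a\rangle$, reduces to left multiplication by $a^p-\hbar^{p-1}a$ via the shift-invariance of the Artin--Schreier expression---is exactly the point the paper leaves implicit (it appears in the proof of \cref{prop:restricted-quantized-structure}), and you verify it correctly.
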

\begin{proof}
    Indeed, from the construction of the quantized Frobenius and \cref{lem:restricted-power-id-on-h}, it immediately follows that the action of $\bar{a} \in \mathfrak{t} \oplus \mathfrak{t}$ is given by $a^p - \hbar^{p-1} a^{[p]} = s(a)$, as desired.
\end{proof}

\subsubsection{The quantum $D$-module}
The quantum $D$-module of the Springer resolution is computed in \cite[Theorem 1.2]{BMO11}, where the quantum connection is identified with the affine KZ connection (or the Dunkl connection). In characteristic $p$, the mod $p$ quantum $D$-module of the Springer resolution was studied in \cite{Lee23b}, and the following result was obtained as a special case of \cref{thm:qst-is-pcurv}. Denote by $\nabla_x = t \partial_{\bar{x}} + x \star_{T}$ the ($T \times \mathbb{G}_m$-equivariant) quantum connection for $ X = T^*(G/B)$.

\begin{prop}[{\cite[Theorem 1.2]{Lee23b}}]\label{thm:qst-is-pcurv-springer}
    Let $X = T^*(G/B)$ be the Springer resolution. Fix a degree $2$ class $x \in H^2_{T}(X;\mathbb{F}_p)$; then for almost all $p$, we have
    \begin{equation}
        \Sigma_x^T = \nabla_x^p - t^{p-1} \nabla_x,
    \end{equation}
    that is, the quantum Steenrod operation on degree $2$ classes agree with the $p$-curvature.
\end{prop}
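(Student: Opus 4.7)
The plan is to derive this proposition as a corollary of the general machinery developed earlier in the paper, specifically by applying \cref{thm:qst-is-pcurv-onthenose} to the Springer resolution. That corollary reduces the desired equality $\Sigma_x^T = \nabla_x^p - t^{p-1}\nabla_x$ (on the nose, with no nilpotent error) to two inputs: (i) the hypothesis of isolated $T$-fixed points required by the underlying \cref{thm:qst-is-pcurv}, and (ii) the jointly simple spectrum of the quantum multiplication operators $\{y \star_T : y \in H^2_T(X;k)\}$ on $H^*_{T \times \mathbb{G}_m, loc}(X; \hat{\Lambda})$. Input (i) is immediate for $X = T^*(G/B)$: the $T$-fixed points of $G/B$ are indexed by the Weyl group $W$, and at each such point the cotangent fiber carries nontrivial $T$-weights (the roots), so $X^T \cong W$ is finite.

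For input (ii), the strategy is to verify the spectral condition first for the classical cup product and then transport it to the quantum product by a deformation argument. After equivariant localization, $H^*_{T \times \mathbb{G}_m, loc}(X; k)$ admits a fixed-point basis $\{[p_w]\}_{w \in W}$. Classical cup product by a class $\lambda \in H^2_T(X;k) \cong X^\bullet(T) \otimes k \oplus k\hbar$ acts diagonally on this basis with eigenvalue $w(\lambda)$ at $[p_w]$, and since the $W$-action on $\mathfrak{t}^*$ is faithful, the collection of these eigenvalues separates $W$ for generic $\lambda$. Hence the classical multiplication operators $\{\lambda \cup\}$ have jointly simple spectrum for $p$ avoiding a finite set of primes. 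By the BMO description of the quantum connection on $T^*(G/B)$, the quantum product $\lambda \star_T$ is a deformation of $\lambda \cup$ in the Novikov parameters which reduces to classical multiplication upon setting $z^\alpha = 0$. Since jointly simple spectrum is an open condition, the quantum multiplication operators inherit this property over the fraction field of $\hat{\Lambda} \otimes H^*_{T \times \mathbb{G}_m}(\mathrm{pt})$. Feeding these inputs into \cref{thm:qst-is-pcurv-onthenose} yields the conclusion.

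The hard part, which is the substance of the cited \cite[Theorem 1.2]{Lee23b}, lies upstream in the proof of \cref{thm:qst-is-pcurv}: establishing that $\Sigma_x^T - (\nabla_x^p - t^{p-1}\nabla_x)$ is nilpotent in the first place. Granted that result, the present proposition is essentially a verification of the jointly simple spectrum hypothesis outlined above. The main residual subtlety is ensuring that the exceptional set of primes arising from \cref{thm:qst-is-pcurv}, from the faithfulness of the reduction mod $p$ of the $W$-action on the character lattice, and from the denominators appearing in the BMO formula can be controlled simultaneously — this is precisely what the clause ``for almost all $p$'' accommodates, and is consistent with our standing assumption $p \gg 0$.
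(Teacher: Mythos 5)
Your proposal is correct and follows essentially the same route as the paper: reduce to the jointly simple spectrum hypothesis of \cref{thm:qst-is-pcurv-onthenose}, verify it for the classical cup product via the fixed-point basis indexed by $W$ (where the eigenvalue of $x\cup$ at $wB/B$ is $w\cdot x$, distinct for generic $x$ and $p\gg 0$), and transport to the quantum product by the deformation argument.
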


\begin{proof}(cf. {\cite[Corollary 5.2]{Lee23b}})
    As in \cref{thm:qst-is-pcurv-onthenose}, it suffices to show that there is some $x \in H^2_{T}(X;k)$ such that $x \star$ has simple spectrum for generic $p$. Since $x \star$ is a formal deformation of the classical cup product operator $x \cup$, it suffices to check that $x \cup$ has simple spectrum for generic $p$. 
    
    For $x \in H^2_T(X;\mathbb{Z})$, there is a corresponding line bundle $L_x$ on $X = T^*(G/B)$ such that $c_1^T(L_x) = x$. Note that the fixed point basis $H^*_{T \times \mathbb{G}_m}(X^T)$ is an eigenbasis for the operator $x \cup$, and the eigenvalues are given by the weights of $L_x$ restricted to the isolated fixed points (indexed by the Weyl group $W$). The weight at $1 = B/B \in X^T$ is tautologically given by $x$, and the eigenvalues at other fixed points $wB/B \in X^T$ are given by the Weyl group orbits of $w \cdot x$. By choosing $x$ generically (e.g. in the dominant Weyl chamber), it can be achieved that $w \cdot x$ are all distinct for $w \in W$ for generic $p \gg 0$.
\end{proof}

This result verifies \cref{conj:pcurv-for-quantum} for the Springer resolution, hence concludes the proof of the mod $p$ quantum Hikita conjecture for the Springer resolution.

\subsection{Hypertoric varieties}
We begin by introducing two descriptions of hypertoric varieties as Higgs branches and Coulomb branches of gauge theories with abelian group $G$. In both descriptions, one can explicitly understand the structure of quantizations and the Frobenius-constant quantization operators: in the Higgs description, this is given by (crystalline) differential operators which is well-studied in \cite[Section 4.3]{stadnik}, and in the Coulomb description, this is due to \cite{lonergan} as reviewed in \cite[Theorem 3.1]{webster-coulombI}. We review the computation of Frobenius-constant quantizations for both descriptions. The Coulomb branch perspective is an instance of Remark \ref{rem:higgs-coulomb}.

\subsubsection{Definition: the Higgs description}
Hypertoric varieties are analogs of toric varieties in the quaternionic setting, and can be obtained as a hyperK\"ahler quotient of an abelian group action on an affine space in the \emph{Higgs description}.

Consider $\mathbb{A}^n$ together with the coordinate scaling action of the split algebraic torus $(\mathbb{G}_m)^n$. For a fixed map $(\mathbb{G}_m)^n \to T$ for $T$ some split algebraic torus, there is an associated exact sequence of algebraic groups
\begin{equation}
    \begin{tikzcd}
        1 \rar & K \rar & (\mathbb{G}_m)^n \rar & T \rar & 1.
    \end{tikzcd}
\end{equation}
By restriction, $\mathbb{A}^n$ admits a $K$-action, which extends to a Hamiltonian $K$-action on $T^*\mathbb{A}^n$ (with its standard algebraic symplectic form). Denote the corresponding moment map by $\mu : T^*\mathbb{A}^n \to \mathfrak{k}^*$, where $\mathfrak{k}^* := \mathrm{Hom}(K, \mathbb{G}_m)$ is the Lie coalgebra.
\begin{defn}
    The \emph{(Higgs) hypertoric variety} $ Y_H := \mathcal{M}_H(K, \mathbb{A}^n) =  \mu^{-1}(0) /\!\!/_{0} K$  is the hyperK\"ahler quotient for the trivial stability condition $0 \in \mathfrak{k}^*$.
\end{defn}

A resolution of $Y_H$ can be obtained by choosing a generic stability condition $\chi \in \mathfrak{k}^*$ and performing the associated projective GIT quotient $X_H : = \mu^{-1}(0)/\!\!/_\chi K$; note there is a natural map $X_H \to Y_H$.

The universal deformation $\mathcal{X}_H$ of $X_H$ is obtained by varying the moment map parameter, and its affinization is given by $\mathcal{Y}_H = T^*\mathbb{A}^n /\!\!/_0 K = \mathrm{Spec}\ \mathcal{O}(T^*\mathbb{A}^n)^K = \mathrm{Spec}\  k[x_i, y_i]^K$.

These fit into the following picture: 
\begin{center}
    \begin{tikzcd}
        & X_H = \mu^{-1}(0)/\!\!/_\chi K \rar \dar & \mathcal{X}_H  = T^* \mathbb{A}^n /\!\!/_\chi K \dar \\ & Y_H = \mu^{-1}(0)/\!\!/_0 K \rar  & \mathcal{Y}_H = T^*\mathbb{A}^n /\!\!/_0 K 
    \end{tikzcd}.
\end{center}

The canonical quantization in the Higgs description is obtained by taking the Weyl quantization of $\mathcal{O}(T^*\mathbb{A}^n)^K$. Namely, one considers the ring of (asymptotic) differential operators $\mathcal{D}(\mathbb{A}^n)$ on $\mathbb{A}^n$, generated by $x_i$ and $\partial_i$ for $i = 1, \dots, n$ with relation $[\partial_i, x_i] = \hbar$. The $(\mathbb{G}_m)^n$ action on $\mathbb{A}^n$ induces a grading by $\mathbb{Z}^n \cong X^\bullet(\mathbb{G}_m^n)$ on $\mathcal{D}(\mathbb{A}^n)$. Then the canonical quantization is obtained by taking the $K$-invariant part, namely $\mathcal{A} := \bigoplus_{\lambda \in \mathfrak{t}^*} \mathcal{D}(\mathbb{A}_n)_\lambda$.

\begin{lem}
The zeroth weight part of $\mathcal{A}$, that is $\mathcal{A}_0 = \mathcal{D}(\mathbb{A}^n)_0$, is generated by $x_i \partial_i$ for $i = 1, \dots, n$. 
\end{lem}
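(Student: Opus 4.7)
The plan is to use the PBW basis of $\mathcal{D}(\mathbb{A}^n)$ to describe $\mathcal{A}_0$ explicitly, then apply a one-variable ``Artin--Schreier'' expansion to rewrite each basis element in terms of the Euler operators $x_i\partial_i$.

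First I would observe that by PBW, $\mathcal{D}(\mathbb{A}^n)$ is free as a $k[\hbar]$-module on monomials $x^\alpha\partial^\beta$ with $\alpha,\beta \in \mathbb{Z}_{\geq 0}^n$, and the $(\mathbb{G}_m)^n$-grading assigns $x^\alpha\partial^\beta$ the weight $\alpha-\beta \in \mathbb{Z}^n$. Taking $K$-invariants selects the pieces whose $\mathbb{Z}^n$-weight lies in the sublattice $X^\bullet(T) \subseteq \mathbb{Z}^n$ coming from the surjection $(\mathbb{G}_m)^n \twoheadrightarrow T$, and the $T$-weight zero part of $\mathcal{A}$ then coincides with the $\mathbb{Z}^n$-weight zero part of $\mathcal{D}(\mathbb{A}^n)$, with $k[\hbar]$-basis $\{x^\alpha \partial^\alpha\}_{\alpha \in \mathbb{Z}_{\geq 0}^n}$.

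Next, since $x_i,\partial_i$ commute with $x_j,\partial_j$ for $i\neq j$, the basis element factorizes as $x^\alpha\partial^\alpha = \prod_i x_i^{\alpha_i}\partial_i^{\alpha_i}$, and the $1$-variable subalgebras generated by $\{x_i,\partial_i\}$ pairwise commute. It therefore suffices to check, in the $1$-variable Weyl algebra, that $x^k\partial^k$ is a polynomial in $x\partial$ with $k[\hbar]$-coefficients. This follows from the identity
\[
x^k\partial^k = \prod_{j=0}^{k-1}(x\partial - j\hbar),
\]
which is precisely the sort of Artin--Schreier factorization highlighted in the introduction. It is proved by a short induction using $\partial\, x^{k-1} = x^{k-1}\partial + (k-1)\hbar\, x^{k-2}$, which yields $x\partial\cdot x^{k-1}\partial^{k-1} = x^k\partial^k + (k-1)\hbar\, x^{k-1}\partial^{k-1}$, and hence the recursion $x^k\partial^k = (x\partial - (k-1)\hbar)\cdot x^{k-1}\partial^{k-1}$.

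Combining these steps, every basis element $x^\alpha\partial^\alpha$ lies in the $k[\hbar]$-subalgebra generated by $\{x_i\partial_i\}_{i=1}^n$, proving the lemma. There is no genuine obstacle here; the only point requiring care is the PBW bookkeeping used to identify the weight zero part as having basis $\{x^\alpha\partial^\alpha\}$, after which the classical $1$-variable identity closes the argument.
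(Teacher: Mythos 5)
Your proof is correct and follows essentially the same route as the paper's: reduce to weight-zero monomials via normal ordering and invoke the one-variable factorization $x^k\partial^k = \prod_{j=0}^{k-1}(x\partial - j\hbar)$, which the paper states (with an opposite sign convention on the shifts) and leaves to an "easy induction" with a reference to BLPW. Your version just spells out the PBW bookkeeping and the induction explicitly, and your signs are the ones consistent with the stated relation $[\partial_i,x_i]=\hbar$.
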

\begin{proof}
    Note that certainly $x_i \partial_i \in \mathcal{D}(\mathbb{A}^n)_0$, and consequently $\partial_i x_i = \hbar + x_i \partial_i \in \mathcal{D}(\mathbb{A}^n)_0$. A general monomial of weight grading zero is a product of $x_i$'s and $\partial_i$'s such that for every index $i$, the number of occurrences of $x_i$ and $\partial_i$ is equal. The fact that these are all generated by $x_i \partial_i$ follows from an easy induction involving the identity $x_i^k \partial_i^k = (x_i \partial_i) (x_i\partial_i + \hbar) \cdots (x_i\partial_i + (k-1)\hbar)$, see \cite[Sections 3.1, 3.2]{BLPW-hypertoric-category-O} for more details.
\end{proof}

In general, we have generators $m^\lambda := x^{\lambda_+} \partial^{\lambda_-}$ so that $\mathcal{D}(\mathbb{A}_n)_{\lambda} = \mathcal{D}(\mathbb{A}_n)_0 m^\lambda$ for $\lambda = \lambda_+ - \lambda_-$.

The cohomology $H^*_{T \times \mathbb{G}_m}(X_H)$ and the weight degree $0$ part of the quantization $\mathcal{A}_0$ are both generated by elements of (cohomological and conical, resp.) degree $2$, see \cite[Proposition 6.10, Section 6.2]{KMP21}. Hence it suffices to verify \cref{conj:pcurv-for-quantum}, \cref{conj:pcurv-for-trace}.

\subsubsection{Definition: the Coulomb description}
Alternatively, hypertoric varieties can also be constructed using BFN Coulomb branches. Again, let $V = \mathbb{A}^n$ be equipped with its coordinate-wise scaling action $ (\mathbb{G}_m)^n$, and consider it as a representation of the abelian group $G=K$ by the exact sequence
    \begin{equation}
    \begin{tikzcd}
        1 \rar & K \rar & (\mathbb{G}_m)^n \rar & T \rar & 1.
    \end{tikzcd}
\end{equation}

First, note that $\mathrm{Gr}_K := K(\!(z)\!)/K[\![z]\!]$ is just a copy of the cocharacter lattice $X_\bullet(K)$. We denote the generators by $z^\nu$ for $\nu \in X_\bullet(K)$. It is then easy to identify the BFN Steinberg variety and the moduli of triples for $(G, V) = (K, \mathbb{A}^n)$ as
\begin{equation}
    \mathcal{T}_{K, \mathbb{A}^n} := \bigsqcup_{\nu \in X_\bullet(K)} \{z ^\nu \} \times z^\nu V\brO, \quad \mathcal{R}_{K, \mathbb{A}^n} := \bigsqcup_{\nu \in X_\bullet(K)} \{z ^\nu \} \times (V\brO \cap z^\nu V\brO).
\end{equation}
For the description of $\mathcal{R}_{K, \mathbb{A}^n}$, we used the fact that the action map $\mu: \mathcal{T}_{G, V} \to V\brK$ simply takes $\{z^\nu\} \times z^\nu V\brO$ to $z^\nu V\brO$. We denote the fundamental classes (in the Borel--Moore homology) of components in $\mathcal{R}_{G,V}$ as $r_\nu$; these are also called the \emph{monopole operators}. The following statement is evident.

\begin{prop}
    The BFN Coulomb branch is additively $\mathcal{O}(Y_C) := H_*^{K\brO} (\mathcal{R}) = \bigoplus_{\nu \in \mathfrak{k}} H^*(BK) \cdot r_\nu$.
\end{prop}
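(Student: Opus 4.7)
The plan is to exploit the explicit decomposition of $\mathcal{R}_{K,\mathbb{A}^n}$ already established just above the statement, together with the fact that $K$ is abelian (so the affine Grassmannian $\mathrm{Gr}_K$ is discrete and equal to the cocharacter lattice $X_\bullet(K)$).

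First, I would use the decomposition
\begin{equation*}
    \mathcal{R}_{K, \mathbb{A}^n} = \bigsqcup_{\nu \in X_\bullet(K)} \{z^\nu\} \times (V\brO \cap z^\nu V\brO)
\end{equation*}
to reduce the computation of the equivariant Borel--Moore homology to a sum over connected components indexed by $\nu$. Since the components sit over distinct points $\{z^\nu\}$ of the discrete space $\mathrm{Gr}_K$, they are open and closed in $\mathcal{R}_{K,\mathbb{A}^n}$, so $H_*^{K\brO}(\mathcal{R}) = \bigoplus_{\nu \in X_\bullet(K)} H_*^{K\brO}\big(\{z^\nu\}\times(V\brO \cap z^\nu V\brO)\big)$.

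Second, for each fixed $\nu$, I would analyze the component $V\brO \cap z^\nu V\brO$. Decomposing $V = \mathbb{A}^n$ into the $K$-weight lines with weights $\chi_1,\dots,\chi_n$ (inherited from the map $K \to (\mathbb{G}_m)^n$), the intersection splits coordinate-wise as
\begin{equation*}
    V\brO \cap z^\nu V\brO \;\cong\; \prod_{i=1}^n z^{\max(0,\langle \nu, \chi_i\rangle)} k\brO,
\end{equation*}
which is a $K\brO$-invariant affine subspace of $V\brK$ (a profinite-dimensional affine space on which $K\brO$ acts linearly through its quotient $K$). The key observation is that $K\brO$-equivariantly, this affine space is a Thom space over a point: the natural contraction to the origin is $K$-equivariant, and the homotopy type of $BK\brO$ coincides with $BK$. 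Therefore the renormalized $K\brO$-equivariant Borel--Moore homology of this component is freely generated by its fundamental class $r_\nu$ as a module over $H^*_{K\brO}(\mathrm{pt}) \cong H^*(BK)$.

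Combining these two steps gives the claimed additive decomposition
\begin{equation*}
    H_*^{K\brO}(\mathcal{R}) \;=\; \bigoplus_{\nu \in X_\bullet(K)} H^*(BK)\cdot r_\nu.
\end{equation*}
The main subtlety, rather than any genuine obstacle, is bookkeeping around the infinite-dimensionality of $V\brO$: one must use the renormalized/dimensionally-shifted Borel--Moore homology for placid ind-schemes (in the sense of BFN), so that the Thom isomorphism applied to the pro-vector bundle $V\brO \cap z^\nu V\brO \to \mathrm{pt}$ produces a rank-one free $H^*(BK)$-module rather than something ill-defined. Once the conventions of \cite{BFN2} are invoked, this step is essentially formal and the proposition follows.
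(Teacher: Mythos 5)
Your argument is correct and is precisely the one the paper has in mind: the paper states the proposition as ``evident'' immediately after displaying the decomposition $\mathcal{R}_{K,\mathbb{A}^n} = \bigsqcup_{\nu} \{z^\nu\} \times (V\brO \cap z^\nu V\brO)$, and you simply fill in the routine details (discreteness of $\mathrm{Gr}_K$ for abelian $K$, each component being a $K\brO$-equivariantly contractible pro-affine lattice, and the renormalized Borel--Moore conventions of \cite{BFN2} giving a free rank-one $H^*(BK)$-module on the fundamental class $r_\nu$). No gaps.
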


The following is a computation of Braverman--Finkelberg--Nakajima of the ring structure of the Coulomb branch in the abelian case.

\begin{prop}[{\cite[4(i, vii)]{BFN2}}]\label{prop:abelian-coulomb-branch-is-hypertoric}
    The BFN Coulomb branch $\mathcal{M}_C(K, \mathbb{A}^n) = \mathrm{Spec} \ H_*^{K\brO}(\mathcal{R};k)$ can be identified as the GIT quotient $\mu_{T^\vee}^{-1}(0) /\!\!/_0 T^\vee$ for the trivial stability condition $0 \in \mathfrak{t} = \mathrm{Lie}(T^\vee)^*$.
\end{prop}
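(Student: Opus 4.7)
The plan is to compute the convolution algebra structure on $H_*^{K\brO}(\mathcal{R}_{K,\mathbb{A}^n};k)$ directly, using the additive decomposition $\bigoplus_{\nu \in X_\bullet(K)} H^*(BK) \cdot r_\nu$ established in the previous proposition, and then match it against an explicit presentation of $\mathcal{O}(\mu_{T^\vee}^{-1}(0)\gitq_0 T^\vee)$. Write $\alpha_1,\ldots,\alpha_n \in X^\bullet(K)$ for the $K$-weights on $\mathbb{A}^n$, inherited from the inclusion $K \hookrightarrow (\mathbb{G}_m)^n$. The key computation, performed via the BFN convolution diagram and the excess intersection formula applied to $\mathcal{R}_\nu \cap \mathcal{R}_\mu \subseteq \mathcal{R}_{\nu+\mu}$, gives the abelian monopole formula
\[
r_\nu \cdot r_\mu \;=\; \Bigl(\prod_{i \in S(\nu,\mu)} \alpha_i \Bigr) \cdot r_{\nu+\mu},
\]
where $S(\nu,\mu) \subseteq \{1,\ldots,n\}$ is the set of indices $i$ for which $\langle \alpha_i,\nu\rangle$ and $\langle \alpha_i,\mu\rangle$ have opposite signs, and each $\alpha_i$ is viewed inside $H^2(BK)$ via the character lattice $X^\bullet(K) \hookrightarrow \mathrm{Sym}\,\mathfrak{k}^*$.

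Next, describe the coordinate ring of the GIT quotient. The Pontryagin-dual exact sequence $1 \to T^\vee \to (\mathbb{G}_m)^n \to K^\vee \to 1$ realizes $T^\vee$ as a subtorus of $(\mathbb{G}_m)^n$ acting on $T^*\mathbb{A}^n$ with moment map $\mu_{T^\vee}(x,y) = \sum_i x_i y_i \cdot \bar{e}_i \in \mathfrak{t}$, where $\bar{e}_i$ is the image of the $i$-th standard basis vector under the projection $k^n \twoheadrightarrow \mathfrak{t}$. The ring $k[x_i,y_i]^{T^\vee}$ is freely generated as a module over $k[x_i y_i]$ by the ``monomial'' generators $m^\nu := \prod_{\langle \alpha_i,\nu\rangle>0} x_i^{\langle \alpha_i,\nu\rangle} \prod_{\langle \alpha_i,\nu\rangle<0} y_i^{-\langle \alpha_i,\nu\rangle}$ indexed by $\nu \in X_\bullet(K)$. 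Modding out by the moment map relations identifies the $k[x_i y_i]$-subalgebra with $\mathrm{Sym}\,\mathfrak{k}^* = H^*(BK)$ by sending $x_i y_i \mapsto \alpha_i$; thus as graded vector spaces, $\mathcal{O}(\mu_{T^\vee}^{-1}(0)\gitq_0 T^\vee) \cong \bigoplus_{\nu \in X_\bullet(K)} H^*(BK) \cdot m^\nu$.

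Finally, define the candidate isomorphism $\Psi \colon H_*^{K\brO}(\mathcal{R};k) \to \mathcal{O}(\mu_{T^\vee}^{-1}(0)\gitq_0 T^\vee)$ as the $H^*(BK)$-linear extension of $r_\nu \mapsto m^\nu$. Compatibility with the ring structure reduces to checking $\Psi(r_\nu \cdot r_\mu) = \Psi(r_\nu) \cdot \Psi(r_\mu)$, and this is where the key matching occurs: the product $m^\nu \cdot m^\mu$ in $k[x_i,y_i]$ collects a factor $\prod_{i \in S(\nu,\mu)}(x_i y_i)$ precisely along indices where the $x_i^a$ and $y_i^b$ contributions partially cancel, which under $x_i y_i \mapsto \alpha_i$ becomes the BFN convolution constant $\prod_{i \in S(\nu,\mu)} \alpha_i$. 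Bijectivity then follows from matching the respective bases.

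The main obstacle is the careful bookkeeping of weight conventions: ensuring that the sign of $\langle \alpha_i, \nu\rangle$ consistently governs both the geometry of the pair $V\brO \cap z^\nu V\brO$ (hence the codimension giving rise to the excess factors in the convolution) and the monomial shape of $m^\nu$, so that the matching of products is not merely qualitative but exact on the nose. Once this is settled for cocharacters of the form $\pm e_i$ (the elementary monopoles), the general case follows by induction on the length of $\nu$ using the convolution formula, and agrees with the direct computation of \cite[\S 4(i, vii)]{BFN2}.
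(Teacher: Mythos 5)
The paper gives no argument for this proposition at all---it is quoted verbatim from \cite[4(i), 4(vii)]{BFN2}---so what you have written is essentially a reconstruction of Braverman--Finkelberg--Nakajima's own computation: decompose both sides as free $H^*(BK)$-modules on generators indexed by $X_\bullet(K)$, send $r_\nu \mapsto m^\nu$, and match structure constants. That strategy is the right one and does work. However, the convolution formula you state is not correct as written: the excess factor is $\prod_i \alpha_i^{d_i}$ with
\begin{equation*}
d_i \;=\; \tfrac{1}{2}\bigl(|\langle \alpha_i,\nu\rangle| + |\langle \alpha_i,\mu\rangle| - |\langle \alpha_i,\nu+\mu\rangle|\bigr)
\;=\;
\begin{cases}
\min\bigl(|\langle \alpha_i,\nu\rangle|,\,|\langle \alpha_i,\mu\rangle|\bigr) & \text{if the pairings have opposite signs},\\
0 & \text{otherwise},
\end{cases}
\end{equation*}
not a multiplicity-one product over the set $S(\nu,\mu)$. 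The identical correction applies to your monomial computation, where $m^\nu m^\mu = \prod_i (x_iy_i)^{d_i}\, m^{\nu+\mu}$ (take $\langle\alpha_i,\nu\rangle=2$, $\langle\alpha_i,\mu\rangle=-3$: the cancelling factor is $(x_iy_i)^2$, not $x_iy_i$). Since the two corrections coincide, the isomorphism $\Psi$ still intertwines the products once both formulas are fixed. I would also drop the closing reduction to ``elementary monopoles $\pm e_i$ plus induction on length'': $X_\bullet(K)$ has no distinguished basis dual to the weights $\alpha_i$, and $r_{\nu+\mu}$ is not recoverable from $r_\nu r_\mu$ without dividing by the excess factors, so the induction does not obviously close; the direct verification of the structure constants for arbitrary $\nu,\mu$ (which is what BFN do, and what the rest of your argument already accomplishes) is the cleaner route.
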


The Coulomb branch is therefore a hypertoric variety obtained by the hyperK\"ahler reduction construction on $T^*\mathbb{A}^n$ associated to the dual sequence
\begin{equation}
    \begin{tikzcd}
        1 \rar & T^\vee \rar & (\mathbb{G}_m)^n \rar & K^\vee \rar & 1.
    \end{tikzcd}
\end{equation}

Note that for $(G,V) = (K, \mathbb{A}^n)$, the generators of the Coulomb branch algebra are given by $\nu \in \mathfrak{k}$ (cocharacters of $K$). In the Higgs case with the same data $(G,V) = (K, \mathbb{A}^n)$, the generators were indexed by $\mathfrak{t}^*$ (characters of $T$). This is a manifestation of \emph{Gale duality} as a special case of 3D mirror symmetry.

Again by choosing a generic stability condition $\xi \in \mathfrak{t} $ one can take a resolution of the BFN Coulomb branch in the abelian case. The universal deformation $\mathcal{X}_C$ of $X_C$ is obtained by varying the $(T^\vee)$-moment map parameter, and its affinization is given by $\mathcal{Y}_C = T^*\mathbb{A}^n /\!\!/_0 T^\vee = \mathrm{Spec}\ \mathcal{O}(T^*\mathbb{A}^n)^{T^\vee} = \mathrm{Spec}\  k[x_i, y_i]^{T^\vee}$.

These fit into the following diagram (cf. \cite[3(ix)]{BFN2}): 
\begin{center}
    \begin{tikzcd}
        & X_C = \mu^{-1}(0)/\!\!/_\xi T^\vee \rar \dar & \mathcal{X}_C  = T^* \mathbb{A}^n /\!\!/_\xi T^\vee \dar \\ & Y_C = \mu^{-1}(0)/\!\!/_0 T^\vee \rar & \mathcal{Y}_C = T^*\mathbb{A}^n /\!\!/_0 T^\vee 
    \end{tikzcd}.
\end{center}
This diagram is compatible with the description of the deformation of Coulomb branches (see the discussion at the end of \cref{sssec:exmp-coulomb}) in terms of flavor symmetries: here we consider the extension of $K^{\vee}$ by $T^{\vee}$ into $(\mathbb{G}_m)^n$ acting on $\mathbb{A}^n$.

\begin{prop}[{\cite[3(ix), 4(ii, vii)]{BFN2}}]\label{prop:abelian-coulomb-is-hypertoric-quantum}
    The canonical quantization of the BFN Coulomb branch $\mathcal{A} = \mathcal{A}_\hbar((\mathbb{G}_m)^n, \mathbb{A}^n)^{T^\vee}$ is isomorphic as associative algebras to the hypertoric enveloping algebra $\bigoplus_{\nu \in \mathfrak{k}} \mathcal{D}(\mathbb{A}^n)_{\nu}$. 
\end{prop}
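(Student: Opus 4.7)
The plan is to compute the ambient Coulomb branch algebra $\mathcal{A}_\hbar((\mathbb{G}_m)^n, \mathbb{A}^n)$ explicitly as the asymptotic Weyl algebra $\mathcal{D}_\hbar(\mathbb{A}^n)$, and then take $T^\vee$-invariants---using the Cartier-dual sequence $1 \to T^\vee \to (\mathbb{G}_m)^n \to K^\vee \to 1$---to extract the subalgebra corresponding to the canonical quantization of the Coulomb branch of $(K,\mathbb{A}^n)$. Since the $(\mathbb{G}_m)^n$-action on $\mathbb{A}^n$ is diagonal and the BFN convolution respects K\"unneth decompositions, I would first reduce to the rank-one case $n = 1$. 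For $(\mathbb{G}_m, \mathbb{A}^1)$ the moduli of triples decomposes as
\begin{equation*}
    \mathcal{R}_{\mathbb{G}_m, \mathbb{A}^1} = \bigsqcup_{\nu \in \mathbb{Z}} \{z^\nu\} \times (k\brO \cap z^\nu k\brO),
\end{equation*}
each piece an affine subspace of $k\brO$ (or $z^\nu k\brO$) of finite codimension $|\nu|$; its $\mathbb{G}_m\brO \rtimes \mathbb{G}_m$-equivariant Borel--Moore homology is free of rank one over $k[\xi,\hbar]$ on the monopole operator $r_\nu$.

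I would then exhibit the candidate algebra map
\begin{equation*}
    \Phi : \mathcal{A}_\hbar((\mathbb{G}_m)^n, \mathbb{A}^n) \longrightarrow \mathcal{D}_\hbar(\mathbb{A}^n), \qquad r_\nu \longmapsto x^{\nu_+}\partial^{\nu_-}, \quad \xi_i \longmapsto x_i\partial_i,
\end{equation*}
extended multiplicatively. The nontrivial content is the abelian bubbling computation
\begin{equation*}
    r_{e_i}\star r_{-e_i} = \xi_i, \qquad r_{-e_i}\star r_{e_i} = \xi_i + \hbar,
\end{equation*}
worked out in the abelian case of \cite[Section 4]{BFN2}, which matches $x_i\partial_i$ and $\partial_i x_i = x_i\partial_i + \hbar$ in $\mathcal{D}_\hbar(\mathbb{A}^1)$. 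Combined with the K\"unneth factorization in the rank-$n$ setting, this upgrades $\Phi$ to a well-defined algebra isomorphism.

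The final step is to pass to $T^\vee$-invariants. The map $\Phi$ is $(\mathbb{G}_m)^n$-equivariant for the weight grading $\mathrm{wt}(r_\nu) = \nu = \mathrm{wt}(x^{\nu_+}\partial^{\nu_-})$ in $X^\bullet((\mathbb{G}_m)^n) = \mathbb{Z}^n$. The restriction of this weight to $T^\vee \hookrightarrow (\mathbb{G}_m)^n$ is the composition $\mathbb{Z}^n \twoheadrightarrow X^\bullet(T^\vee) = X_\bullet(T)$, whose kernel is precisely $X_\bullet(K)$. Taking $T^\vee$-fixed points on both sides therefore yields
\begin{equation*}
    \mathcal{A} = \mathcal{A}_\hbar((\mathbb{G}_m)^n, \mathbb{A}^n)^{T^\vee} \xrightarrow{\ \Phi\ } \bigoplus_{\nu \in \mathfrak{k}} \mathcal{D}(\mathbb{A}^n)_\nu
\end{equation*}
as associative algebras, which is the hypertoric enveloping algebra. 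The only substantive obstacle is the explicit BFN convolution computation in the abelian case (ensuring the correct sign and $\hbar$-shift conventions); after that, the K\"unneth reduction and the $T^\vee$-weight bookkeeping are essentially formal.
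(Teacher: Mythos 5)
The paper gives no proof of this proposition---it is quoted verbatim from \cite[3(ix), 4(ii, vii)]{BFN2}---and your outline is a faithful reconstruction of exactly that argument: K\"unneth reduction to $(\mathbb{G}_m,\mathbb{A}^1)$, the abelian convolution computation sending $r_{\pm e_i}\mapsto x_i,\partial_i$ and the equivariant parameter to $x_i\partial_i$, and the identification of the $T^\vee$-invariant weights with $X_\bullet(K)=\ker(\mathbb{Z}^n\to X_\bullet(T))$. The sketch is correct; the only blemishes are minor: the component $z^{\nu_+}k\brO$ has codimension $\nu_+$ in $k\brO$ and $\nu_-$ in $z^\nu k\brO$ (not $|\nu|$ in each), you should record that $\mathcal{A}_\hbar(\mathbb{G}_m,\mathbb{A}^1)$ is generated by $r_{\pm1}$ over $k[\xi,\hbar]$ before ``extending multiplicatively,'' and the $\hbar$-shift normalization in $r_{e_i}\star r_{-e_i}$ versus $r_{-e_i}\star r_{e_i}$ must match BFN's conventions, as you already flag.
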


\subsubsection{The $D$-module of twisted traces}

The Frobenius-constant quantization structure on hypertoric varieties can be constructed in either the Higgs or the Coulomb description. In the Higgs case, the construction is simple, as the quantization is again given by (crystalline) differential operators, not unlike the case of  Springer resolutions.

Recall that the universal deformation $\mathcal{X}_H$ has coordinate ring given by $\mathcal{O}(T^*\mathbb{A}^n)^K = k[x_i, y_i]^K$, with the zero $T$-weight part given by $\mathcal{O}(\mathcal{X}_H)_0 = k[x_iy_i]$.
\begin{lem}
There is a Frobenius-constant quantization
\begin{equation}
    \Lambda_{Higgs} : \mathcal{O}(\mathcal{X}_H)^{(1)}_0 \to \mathcal{Z}(\mathcal{A}_0)
\end{equation}
of the abelian Higgs branch induced by the $p$-curvature endomorphism (the Artin--Schreier map) of the Weyl algebra, that is  $x_iy_i$ is sent to $x_i^p \partial_i^p = (x_i \partial_i)^p - \hbar^{p-1} (x_i\partial_i)$.
\end{lem}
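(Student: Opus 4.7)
The plan is to exhibit $\Lambda_{Higgs}$ as the restriction of the standard Frobenius splitting of the ambient Weyl algebra, so that the entire argument reduces to elementary computations in $\mathcal{D}(\mathbb{A}^n)$. First, on the full Weyl algebra I would define the map
$$\widetilde{\Lambda}: \mathcal{O}(T^*\mathbb{A}^n)^{(1)} \longrightarrow Z(\mathcal{D}(\mathbb{A}^n)), \qquad x_i \mapsto x_i^p, \quad y_i \mapsto \partial_i^p.$$
Each $x_i^p$ and $\partial_i^p$ is central, since $[\partial_i, x_i^p] = \sum_{k=0}^{p-1} x_i^k \hbar x_i^{p-1-k} = p\hbar x_i^{p-1} = 0$ in characteristic $p$ (and symmetrically for $[x_i, \partial_i^p]$); the images mutually commute, so $\widetilde\Lambda$ extends uniquely to a Frobenius-linear algebra homomorphism from the polynomial algebra $\mathcal{O}(T^*\mathbb{A}^n)$ into $Z(\mathcal{D}(\mathbb{A}^n))$.

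Next, I would observe that $\widetilde\Lambda$ is equivariant for the natural $(\mathbb{G}_m)^n$-action on both sides: the coordinate scaling on $T^*\mathbb{A}^n$ lifts compatibly to $(x_i, \partial_i)$, and replacing a generator by its $p$-th power simply multiplies the weight by $p$. Restricting first to $K$-invariants and then to the $T$-weight-zero part therefore produces a well-defined Frobenius-linear map
$$\Lambda_{Higgs}:\ \mathcal{O}(\mathcal{X}_H)_0^{(1)} = k[x_iy_i]^{(1)} \longrightarrow Z(\mathcal{A}_0),$$
using the identifications $\mathcal{A}_0 = \mathcal{D}(\mathbb{A}^n)_0$ and the fact that the $T$-weight-zero part of $\mathcal{O}(\mathcal{X}_H) = \mathcal{O}(T^*\mathbb{A}^n)^K$ coincides with the full $(\mathbb{G}_m)^n$-invariant subring, which is $k[x_iy_i]$.

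The explicit formula and the Frobenius-constant condition then both follow from the Artin--Schreier-type identity
$$x_i^p \partial_i^p \;=\; \prod_{j=0}^{p-1}(x_i\partial_i - j\hbar) \;=\; (x_i\partial_i)^p - \hbar^{p-1}(x_i\partial_i),$$
which I would derive by iterating $[\partial_i, x_i] = \hbar$ to obtain the Stirling-type factorization, and then applying $\prod_{j\in\mathbb{F}_p}(T-j\hbar) = T^p - \hbar^{p-1}T$. Lifting $x_iy_i \in \mathcal{O}(\mathcal{X}_H)_0$ to $x_i\partial_i \in \mathcal{A}_0$, the identity gives $\Lambda_{Higgs}(x_iy_i) - (x_i\partial_i)^p = -\hbar^{p-1}(x_i\partial_i) \in \hbar^{p-1}\mathcal{A}_0$, verifying the defining congruence of \cref{defn:frob-const-quant}. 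I expect no substantive obstacle: the one point requiring a moment of care is the compatibility of the various weight-grading identifications, but these are already packaged by \cref{prop:abelian-coulomb-is-hypertoric-quantum} and the preceding discussion.
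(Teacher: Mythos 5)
Your proposal is correct and follows essentially the same route as the paper: both define $\Lambda_{Higgs}$ by restricting the canonical Frobenius splitting $x_i \mapsto x_i^p$, $y_i \mapsto \partial_i^p$ of the ambient Weyl algebra and then deduce the formula on the generators $x_iy_i$ from the identity $x_i^p\partial_i^p = (x_i\partial_i)^p - \hbar^{p-1}(x_i\partial_i)$. You merely spell out a few steps the paper leaves implicit (centrality of $x_i^p$, $\partial_i^p$, torus-equivariance of the restriction, and the congruence $\Lambda(f) \equiv f^p$ mod $\hbar^{p-1}$), which is fine.
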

\begin{proof}
In fact, one may define the Frobenius-constant quantization on the whole of $\mathcal{O}(\mathcal{X}_H) = \mathcal{O}(T^*\mathbb{A}^n)^K$ by restricting the canonical Frobenius-constant quantization $x_i \mapsto x_i^p$, $y_i \mapsto \partial_i^p$ of $\mathcal{O}(T^*\mathbb{A}^n)$. Restricted to the zeroth weight part, one can see that the action on the generators $x_iy_i$ is determined by the algebra map property of the canonical Frobenius-constant quantization, namely
\begin{equation}
    x_i y_i \mapsto x_i^p \partial_i^p = (x_i \partial_i)^p - \hbar^{p-1} (x_i \partial_i);
\end{equation}
for the last equality we use the combinatorial identity $x_i^k \partial_i^k = (x_i \partial_i) (x_i\partial_i + \hbar) \cdots (x_i\partial_i + (k-1)\hbar)$ and $\prod_{j \in \mathbb{F}_p} (x_i \partial_i + j \hbar) = (x_i \partial_i)^p - \hbar^{p-1} (x_i \partial_i)$. Therefore, we can simply define $\Lambda_{Higgs}$ by the restriction, which produces the Artin--Schreier map on the generators.
\end{proof}

The construction of the Frobenius-constant quantization structure for the hypertoric varieties in their description as Coulomb branches is given by a special case of Lonergan's theorem.
\begin{thm}[cf. {\cite[Example 3.16(3)]{lonergan}}]
    There is a Frobenius-constant quantization 
\begin{equation}
    \Lambda_{Coulomb} : \mathcal{O}(\mathcal{X}_C)^{(1)}_0 \to \mathcal{Z}(\mathcal{A}_0)
\end{equation}

    of the abelian BFN Coulomb branch over $k$ induced by the $p$-curvature endomorphism (the Artin--Schreier map) of the Weyl algebra,  that is  $x_iy_i$ is sent to $x_i^p \partial_i^p = (x_i \partial_i)^p - \hbar^{p-1} (x_i\partial_i)$.
\end{thm}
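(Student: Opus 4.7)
The plan is to exploit the isomorphism between the Coulomb and Higgs descriptions of the abelian hypertoric variety, and then import the Artin--Schreier construction on the Weyl algebra which was already used in the Higgs-side lemma. By \cref{prop:abelian-coulomb-branch-is-hypertoric} and \cref{prop:abelian-coulomb-is-hypertoric-quantum}, the canonical quantization $\mathcal{A} = \mathcal{A}_\hbar(K, \mathbb{A}^n)$ is identified as an associative algebra with the hypertoric enveloping algebra $\bigoplus_{\nu \in \mathfrak{k}} \mathcal{D}(\mathbb{A}^n)_\nu = \mathcal{D}(\mathbb{A}^n)^{T^\vee}$, and on the classical level $\mathcal{O}(\mathcal{X}_C) \cong k[x_i, y_i]^{T^\vee}$. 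These identifications are compatible with the deformations obtained by varying the $T^\vee$-moment map parameter on the Weyl algebra side and the flavor parameter on the Coulomb side.

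Given this, I would define $\Lambda_{Coulomb}$ as the restriction to $T^\vee$-invariants of the canonical Artin--Schreier map $s : \mathcal{D}(\mathbb{A}^n)^{(1)} \to \mathcal{D}(\mathbb{A}^n)$ sending $x_i \mapsto x_i^p$ and $y_i \mapsto \partial_i^p$. This map is manifestly $T^\vee$-equivariant (rescaling weights by $p$), and its image is central in $\mathcal{D}(\mathbb{A}^n)$, so restricting to $T^\vee$-invariants produces a central algebra map $\mathcal{O}(\mathcal{X}_C)^{(1)} \to Z(\mathcal{A})$ which satisfies the Frobenius-lift condition $\Lambda_{Coulomb}(f) \equiv f^p \bmod \hbar^{p-1}$ on generators by construction.

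The computation on the zero-weight generator $x_i y_i \in \mathcal{O}(\mathcal{X}_C)_0$ is then the standard identity
\[
x_i^p \partial_i^p \;=\; \prod_{j=0}^{p-1}(x_i\partial_i - j\hbar) \;=\; (x_i\partial_i)^p - \hbar^{p-1}(x_i\partial_i),
\]
which follows from $[\partial_i, x_i] = \hbar$ together with $\prod_{j \in \mathbb{F}_p}(T - j\hbar) = T^p - \hbar^{p-1}T$. This yields the formula asserted in the theorem.

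The only genuinely nontrivial step is the identification of the elementary construction above with Lonergan's geometric Frobenius splitting from \cref{thm:lonergan-quantization}, which goes through Steenrod powers on Borel--Moore homology, a descent isomorphism over $Y - \{0\}$, and a specialization map. This is precisely the content of \cite[Example 3.16(3)]{lonergan}: in the abelian case, the composition of these three maps reduces to the Artin--Schreier map on $\mathcal{D}(\mathbb{A}^n)^{T^\vee}$ because both $\mathcal{R}$ and $\mathcal{R}_{(p)}^{(p)}$ have transparent combinatorial descriptions indexed by $X_\bullet(K)$, and under these descriptions the Steenrod power map on monopole cycles can be tracked explicitly. I would therefore refer to \emph{loc.\ cit.} for this matching, since the main goal here is to exhibit the explicit formula and place it within our conjectural framework.
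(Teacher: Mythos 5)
Your elementary steps are fine: the Artin--Schreier map $x_i \mapsto x_i^p$, $y_i \mapsto \partial_i^p$ on $\mathcal{O}(T^*\mathbb{A}^n)^{(1)}$ is $T^\vee$-equivariant and central, so it restricts to a Frobenius splitting of the hypertoric enveloping algebra, and the identity $x_i^p\partial_i^p = (x_i\partial_i)^p - \hbar^{p-1}(x_i\partial_i)$ is correct. But this only reproduces the Higgs-side lemma. The actual content of the theorem is that \emph{Lonergan's} geometrically defined $\Lambda_{Coulomb}$ from \cref{thm:lonergan-quantization} --- the distinguished splitting that \cref{conj:pcurv-for-trace} and the rest of the paper's framework require --- is given by this formula. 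That identification is precisely the step you outsource to \cite[Example 3.16(3)]{lonergan}, and the deferral is not justified as stated: that example treats a basic special case, not the matching for a general abelian pair $(K,\mathbb{A}^n)$, which is why the paper cites it only with ``cf.'' and then supplies its own argument. As written, your proof establishes the existence of \emph{some} Frobenius splitting with the stated formula, not that Lonergan's one has it.

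The missing step has a short, self-contained proof that you should include rather than cite. Lonergan's map is unital (\cite[Remark 3.15(ii)]{lonergan}) and Steenrod-linear over $H^*_{(\mathbb{G}_m)^n[\![z]\!]\rtimes\mathbb{G}_m}(\mathrm{pt})$ (\cite[Proposition 3.30]{lonergan}), so for an equivariant parameter $a$ one gets
\begin{equation*}
\Lambda_{Coulomb}(a) = \mathrm{St}(a)\,\Lambda_{Coulomb}(1) = \mathrm{St}(a),
\end{equation*}
and the classical total Steenrod power of a degree-$2$ class with an integral lift is $\mathrm{St}(\lambda_i) = \lambda_i^p - \hbar^{p-1}\lambda_i$. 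Under the identification of \cref{prop:abelian-coulomb-is-hypertoric-quantum}, which sends $\lambda_i \mapsto x_i\partial_i$, this is exactly $(x_i\partial_i)^p - \hbar^{p-1}(x_i\partial_i)$, i.e.\ your Artin--Schreier map on the weight-zero generators. With that reduction in place (and since the $\lambda_i$ generate $\mathcal{O}(\mathcal{X}_C)_0$), your argument closes up; without it, the theorem is not proved.
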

\begin{proof}
    We directly compute Lonergan's Frobenius-constant quantization, using the Steenrod linearity of the construction \cite[Proposition 3.30]{lonergan}. (Our $\Lambda_{Coulomb}$ is denoted $F_\hbar$ in \emph{loc. cit.}).

    We recall that the isomorphism between $\mathcal{A}_\hbar ((\mathbb{G}_m)^n , \mathbb{A}^n) \cong H_*^{(\mathbb{G}_m)^n\brO \rtimes \mathbb{G}_m} (\mathcal{R}_{(\mathbb{G}_m)^n, \mathbb{A}^n} ; k)$ (the quantized Coulomb branch) and the Weyl algebra $\mathcal{D}(\mathbb{A}^n) \cong k[\hbar] \langle x_i, \partial_i \rangle$ \cite[4(ii), Equation (4.8)]{BFN2} sends the equivariant parameters $\lambda_i$ for $1 \le i \le n$ to $x_i\partial_i$. In the classical limit, we have $\mathcal{M}_C((\mathbb{G}_m)^n ; \mathbb{A}^n) \cong T^*\mathbb{A}^{n}$. The (universal deformation of) abelian Coulomb branch $\mathcal{X}_C$ associated to the group $K$ in the exact sequence $K \to (\mathbb{G}_m)^n \to T$ is obtained as the GIT quotient of $T^*\mathbb{A}^n /\!\!/_0 T^\vee$; its $K^\vee$-weight zero part is a copy of $H^*_{\mathbb{G}_m^n} (\mathrm{pt};k) \cong k [\lambda_i]$ where the equivariant parameters $\lambda_i$ are sent to $x_iy_i \in k[x_iy_i] \cong \mathcal{D}(\mathbb{A}^n)_0$.
    
    Since Lonergan's quantization map $\Lambda_{Coulomb}$ (from \cref{thm:lonergan-quantization}) is unital (\cite[Remark 3.15 (ii)]{lonergan}) and Steenrod-linear (\cite[Proposition 3.30]{lonergan}), we have
    \begin{equation}
        \Lambda_{Coulomb}(a) = \mathrm{St}(a) \Lambda_{Coulomb}(1) = \mathrm{St}(a)
    \end{equation}
    for equivariant parameters $a \in H^*_{(\mathbb{G}_m)^n}(\mathrm{pt};k)$. It therefore suffices to compute the classical Steenrod operations on the generators $x_i y_i$. But for the equivariant parameters $\lambda_i \in H^2_{(\mathbb{G}_m)^n[\![z]\!] }(\mathcal{R};k)$, the general computation of Steenrod operations of degree $2$ classes (that admit integral lift) we have
    \begin{equation}
        \mathrm{St}(\lambda_i) = \lambda_i^p - \hbar^{p-1}\lambda_i \in H^{2p}_{(\mathbb{G}_m)^n[\![z]\!] \rtimes \mathbb{G}_m}(\mathcal{R};k),
    \end{equation}
    which is identified with $(x_i\partial_i)^p - \hbar^{p-1}(x_i\partial_i)$ in the hypertoric enveloping algebra. This is the desired result. \qedhere
    
\end{proof}

\begin{rem}[cf. {\cite[Example 3.16(3)]{lonergan}}]
    We can additionally compute the Steenrod operations of $x_i \in H_0^{G[\![z]\!]}(\mathcal{R};k)$, the monopole operators living in nonzero torus weight summands, as follows. By degree reasons, the total Steenrod power of $x_i$ is just $\mathrm{St}(x_i) = x_i^p$; indeed, this is the $\hbar^0$-term in the expansion of $\mathrm{St}(x_i)$, and the higher order terms in $\hbar$ involve coefficients in $H_*^{G[\![z]\!] \rtimes \mathbb{G}_m}(\mathcal{R};k)$ of negative degree, which are zero. Following the construction outlined in \cref{thm:lonergan-quantization}, this implies that $\Lambda_{Coulomb}(x_i) = x_i^p$.
\end{rem}

These computations verify \cref{conj:pcurv-for-trace} for hypertoric varieties.

\subsubsection{The quantum $D$-module}

In light of \cref{thm:qst-is-pcurv-onthenose}, to show that the quantum Steenrod operations agree with the $p$-curvature, it suffices to verify that the quantum multiplication operators by divisor classes have jointly simple spectrum. We use the explicit description of $H^2(X)$ for smooth hypertoric varieties $X$ obtained by \cite{harada-holm}.

\begin{prop}\label{prop:qst-is-pcurv-hypertoric}
        Let $X := T^*\mathbb{A}^n /\!\!/\!\!/\!\!/\!\!_{\chi, 0} K$ be a smooth hypertoric variety defined from the data $K \to (\mathbb{G}_m)^n \to T$ and $\chi \in \mathfrak{k}^*$, equipped with a Hamiltonian action of the torus $T$. Fix a degree $2$ class $x \in H^2_{T}(X;k)$; then for almost all $p$, we have
    \begin{equation}
        \Sigma_x^T = \nabla_x^p - t^{p-1} \nabla_x,
    \end{equation}
    that is, the quantum Steenrod operation on degree $2$ classes agree with the $p$-curvature.
\end{prop}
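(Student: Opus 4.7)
The approach is to apply Corollary~\ref{thm:qst-is-pcurv-onthenose}, which reduces the identification $\Sigma_x^T = \nabla_x^p - t^{p-1} \nabla_x$ to verifying that the family of $T$-equivariant quantum multiplication operators $\{y \star_T : y \in H^2_T(X;k)\}$ has jointly simple spectrum on the localized equivariant cohomology $H^*_{T \times \mathbb{G}_m, loc}(X;\hat{\Lambda})$. Since quantum multiplication is a formal deformation of classical equivariant cup product $y \cup$ over the Novikov variables, this further reduces to exhibiting a single class $y \in H^2_T(X;k)$ whose classical cup product operator has simple spectrum, for generic~$p$.

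For a smooth hypertoric variety $X$ arising from $K \to (\mathbb{G}_m)^n \to T$ with generic stability parameter $\chi \in \mathfrak{k}^*$, the residual $T$-action has isolated fixed points, indexed combinatorially by feasible bases of the associated cooriented hyperplane arrangement. In the corresponding fixed point basis of localized equivariant cohomology, each operator $y \cup$ is diagonal with eigenvalue $y|_F \in H^*_T(\mathrm{pt};k) \cong \mathrm{Sym}\,\mathfrak{t}^*_k$. Hence joint simplicity of the spectrum is equivalent to the statement that the restriction maps $H^2_T(X;k) \to H^2_T(F;k)$ separate distinct fixed points.

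I would verify this separation using the Harada--Holm presentation of $H^*_T(X;\mathbb{Z})$, which gives generators of $H^2_T(X;\mathbb{Z})$ as $T$-equivariant first Chern classes of the tautological line bundles $L_i$ corresponding to the $n$ coordinate characters of $(\mathbb{G}_m)^n$. Each fixed point $F$ is labeled by a sign pattern on the coordinate hyperplanes, and the restriction $c^T_1(L_i)|_F$ is a signed coordinate weight in $\mathfrak{t}^*$ determined by this pattern; two distinct fixed points determine distinct such signed-weight vectors precisely because the defining hyperplane arrangement is simple (the smoothness hypothesis on $X$). A generic $\mathbb{Z}$-linear combination $y = \sum a_i c^T_1(L_i)$ therefore separates all the finitely many pairs of fixed points; since there are only finitely many pairs to separate and finitely many linear conditions to avoid, such $y$ exists and remains a separating class after reduction modulo $p$ for all $p$ outside a finite bad set.

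The technical heart of the argument is the Harada--Holm combinatorial computation of the restrictions $c^T_1(L_i)|_F$ and the consequent verification that distinct sign patterns yield distinct restriction vectors in $\mathfrak{t}^*_k$ for almost all $p$. Once this linear-algebraic separation is in hand, joint simplicity of $\{y \cup\}$---and hence of $\{y \star_T\}$---follows, and Corollary~\ref{thm:qst-is-pcurv-onthenose} delivers the desired equality $\Sigma_x^T = \nabla_x^p - t^{p-1} \nabla_x$.
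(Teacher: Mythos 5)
Your proposal is correct and follows essentially the same route as the paper: reduce via Corollary~\ref{thm:qst-is-pcurv-onthenose} to joint simplicity of the spectrum of classical equivariant cup products by degree $2$ classes, and verify this from the Harada--Holm fixed-point restriction formulas for the tautological divisor classes $\rho_i$. The only cosmetic difference is that you package the separation into a single generic linear combination $y=\sum a_i c_1^T(L_i)$, whereas the paper checks directly that for any two distinct vertices $v\neq v'$ some $\rho_i$ (one whose hyperplane contains $v$ but not $v'$) already has distinct eigenvalues at the two fixed points; both arguments rest on the same combinatorial computation.
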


\begin{proof}
    It suffices to show that the collection of (equivariant) quantum multiplication operators $y \star$ have jointly simple spectrum for generic $p$. Since $y \star$ is a formal deformation (as commuting operators) of the classical (equivariant) cup product operator $y \cup$, it suffices to check that $y \cup$ have jointly simple spectrum.

    To fix the notation, consider the short exact sequence of algebraic groups
    $1 \to K \to (\mathbb{G}_m)^n \to T \to 1$
    defining the hypertoric variety. The image of the standard basis under the associated map of Lie algebras $\mathbb{Z}^n \to \mathfrak{t} = \mathrm{Lie}(T)$ are denoted by $\{ a_i \}_{1 \le i \le n} \in  \mathfrak{t}$. The choice of the stability condition $\chi \in \mathfrak{k}^*$ determines hyperplanes $H_i = \{ x \in \mathfrak{t}^*_{\mathbb{R}} : \langle x, a_i \rangle = - \chi_i \}$ where $\chi_i$ is the $i$th component of a choice of a lift of $\chi \in \mathfrak{k}^*$ to $\mathbb{Z}^n = \mathrm{Lie}((\mathbb{G}_m)^n)^*$; this hyperplane arrangement is well-defined up to simultaneous translation. The fixed point basis in the (localized) equivariant cohomology $H^*_{T \times \mathbb{G}_m}(X) \otimes \mathrm{Frac}H^*_{T \times \mathbb{G}_m} (\mathrm{pt})$ are indexed by the vertices of the hyperplane arrangement, and in particular the rank is finite \cite[Proposition 3.2]{harada-holm}. For a vertex $v$ in the hyperplane arrangement, we denote by $[v]$ the corresponding basis element (of cohomological degree $0$) in cohomology, corresponding to the fundamental class of the isolated torus fixed point associated to $v$.

    For each fixed point indexed by a vertex $v$, let $I_v \subseteq \{1, \dots, n\}$ be a subset of cardinality $|I_v| = \mathrm{rank}(T)$ such that $i \in I_v$ if and only if $v \in H_i$. For each pair $(v,i)$ such that $i \in I_v$, let $\eta_{v, i} \in \mathfrak{t}^* \cong H^2_T(\mathrm{pt})$ to be the unique element such that $\langle \eta_{v,i} , a_i \rangle = 1$ and $\langle \eta_{v,i} , a_j \rangle =0$ for $j \in I_v \setminus \{i\}$. 

    The main computation of \cite[Theorem 3.5]{harada-holm} shows that the divisor operators $y \cup$ for $y \in H^2_T(X)$ are generated by $n$ classes $\rho_i$, and in the fixed point basis their action is given by
    \begin{equation}
        \rho_i \cup [v] = \begin{cases}
           \eta_{v,i} + \left\langle \eta_{v,i}, \sum_{\langle v, a_j \rangle > - \chi_j} a_j \right\rangle \hbar & \langle v, a_i \rangle = \chi_i\\ 0 & \langle v, a_i \rangle > - \chi_i \\ \hbar & \langle v, a_i \rangle < - \chi_i
        \end{cases}.
    \end{equation}
    (Note that the first case $\langle v, a_i \rangle = \chi_i$ is exactly $v \in H_i$ or equivalently $i \in I_v$.) To see that these operators have jointly simple spectrum, we need to check that for a pair of distinct fixed points $v \neq v'$ there is $\rho_i$ such that the weights at $v$ and $v'$ are distinct. The computation immediately implies this: choose a hyperplane $H_i$ such that $v \in H_i$ but $v' \notin H_i$, and $\rho_i\cup$ acts by a nonzero $T$-weight on $v$ but by zero $T$-weight on $v'$.
\end{proof}

This result verifies \cref{conj:pcurv-for-quantum} for smooth hypertoric varieties.

\bibliographystyle{amsalpha}
\bibliography{ref}

\end{document}